\documentclass[preprint,10pt]{elsarticle}
\usepackage[utf8]{inputenc}
\usepackage[english]{babel}
\usepackage[dvipsnames]{xcolor}
\usepackage{amsmath}
\usepackage{amsfonts}
\usepackage{amsthm}
\usepackage{amssymb}
\usepackage{enumerate}
\usepackage{tikz}
\usepackage{graphicx}
\usepackage{calrsfs}
\usepackage{geometry}
\usetikzlibrary{matrix, positioning}

\newtheorem{theorem}{Theorem}
\newtheorem{definition}{Definition}
\newtheorem{lemma}[theorem]{Lemma}
\newtheorem{corollary}[theorem]{Corollary}

\newcommand{\invpi}{\pi_P^{-1}}

\begin{document}
\begin{frontmatter}

\author{Gunnar Brinkmann}
\ead{Gunnar.Brinkmann@UGent.be}

\author{Heidi Van den Camp \corref{cor1}}
\ead{Heidi.VandenCamp@UGent.be}


\cortext[cor1]{Corresponding author \\ Heidi Van den Camp is a PhD fellow at Ghent University on the BOF (Special Research Fund) scholarship 01D00320 }

\address{Applied Mathematics, Computer Science, and Statistics, \\ Ghent University,\\ 
Krijgslaan 281 S9, 9000 Ghent, Belgium}

\title{On Local Operations that Preserve Symmetries \\ and on Preserving Polyhedrality of Embeddings}
\date{ }

\begin{abstract}

        We prove that local operations (as defined in \cite{gocox}) that preserve all symmetries, as e.g.\ {\em dual, truncation, ambo}, or {\em join,}, as well as local operations
        that preserve all symmetries except orientation reversing ones, as e.g.\ {\em gyro} or {\em snub}, preserve the polyhedrality of simple embedded graphs. This generalizes a
        result by Mohar proving this for the operation {\em dual} in \cite{moh}. We give the proof based on an abstract characterization of these operations, prove that the operations are well
        defined, and also demonstrate the close connection between these operations and Delaney-Dress symbols. We also discuss more general operations not coming from 3-connected
        simple tilings of the plane.

\end{abstract}

\begin{keyword}
  graph, polyhedral embedding, operation, symmetry, tiling
  \end{keyword}

\end{frontmatter}
        
	\section{Introduction}	
		Symmetry-preserving operations on polyhedra have been studied since a very long time. They were probably first applied in ancient Greece. Some of the Archimedean
                solids can be obtained from Platonic solids by applying the operation which was later called `{\em truncation}' by Kepler. Over the centuries, polyhedra and specific
                operations on them have been studied extensively. However, a general definition of the concept `{\em local symmetry-preserving operation}' and a systematic way of describing such operations
                was only presented in 2017 \cite{gocox}. This description covers a large class of operations on embedded graphs, including all well-known
                symmetry-preserving operations such as truncation, the dual, or those operations known as achiral Goldberg-Coxeter operations \cite{CK62}. Goldberg-Coxeter operations
                were in fact introduced by Caspar and Klug and can be used to construct all fullerenes or certain viruses with icosahedral symmetry.
                The new approach in \cite{gocox} allows to tackle various problems from a more abstract perspective, and also allows to prove
                general theorems about the whole class of operations instead of considering each operation separately.
		
		In addition to these local symmetry-preserving operations (lsp-operations), which preserve all the symmetries of an embedded graph, there are also operations that
                are only guaranteed to preserve the orientation-preserving symmetries. Well-known examples of such operations are `{\em snub}' and `{\em gyro}' \cite{randic2000bridges}, or the
                chiral Goldberg-Coxeter operations. In \cite{gocox}, also a general description of such
                `{\em local orientation-preserving symmetry preserving operations}' (lopsp-operations) was presented.

                As the description in \cite{gocox} was aimed at a broader audience than just mathematicians, the approach was described by the more intuitive way of {\em cutting out
                  certain parts of simple periodic 3-connected tilings}.  In this article, we will give the more direct definition based on Delaney-Dress symbols that forms the base of this approach
                and show the connection to the original description.  We will also show that for every lsp-operation there is an equivalent lopsp-operation, i.e. a lopsp-operation
                that has the same result as the lsp-operation when applied to an embedded graph.
		
		In \cite{gocox}, it is proved that the result of applying an lsp-operation to a polyhedron (that is: a simple 3-connected graph embedded in the plane) is also a
                polyhedron. In \cite{lopsp2020} this result is also announced for all lopsp-operations. We will refer to some modified concepts in that paper, but due to some serious problems
                in that paper not use the results given there.
		
		Originally, lsp- as well as lopsp-operations were only defined for simple plane graphs because of their origin in the study of
                polyhedra. However, there is no mathematical reason why these definitions should not be applied to multigraphs and embeddings of higher genus. The question then arises in how far we
                can extend the theorem for 3-connected simple plane graphs to 3-connected embedded graphs of higher genus.
		
		In general, lopsp-operations do not necessarily preserve 3-connectedness for embeddings that are not plane. While this is obvious for multigraphs with faces of size
                1 or 2, it is even true for simple graphs and also requiring the result to be simple. 
                The most striking example of a local symmetry-preserving operation that can turn 3-connected graphs into (even simple)
                graphs with lower connectivity is probably the dual. In \cite{dualconnectivity} it is proven that for any $k\geq 1$, there exist embeddings of
                $k$-connected simple graphs so that the dual is simple and has a 1-cut.
		
		However, even the dual always preserves 3-connectedness in embedded simple graphs of face-width at least three, as proven in \cite{moh}. After defining the notion
                of $ck$-embedded in Definition~\ref{def:ckembed} and 
                of $ck$-operations in Definition \ref{def:lopsp_connectivity}, we will prove the more general Theorem \ref{thm:main} from which the following key result, of which
                the result for the dual is a special case, follows immediately. Here $O(G)$ is the result of applying the operation $O$ to the embedded graph $G$ and all known and well studied
                operations are in the class of $c3$-operations:

		\begin{corollary}\label{cor:k-conn_preserved}
			Let $k\in \{1,2,3\}$. If $G$ is a $ck$-embedded graph, and $O$ is a $ck$-lsp- or $ck$-lopsp-operation, then $O(G)$ is also
                        $ck$-embedded.
		\end{corollary}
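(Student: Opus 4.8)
The plan is to derive Corollary~\ref{cor:k-conn_preserved} as an immediate specialization of Theorem~\ref{thm:main}; since the latter is the substantial statement, the proof of the corollary should reduce to two short steps. First I would handle the lsp-case by reducing it to the lopsp-case. By the equivalence announced in the introduction, every lsp-operation $O$ has an equivalent lopsp-operation $O'$, i.e.\ $O(G)=O'(G)$ for every embedded graph $G$. The point that needs verification is that this translation respects the connectivity grading of Definition~\ref{def:lopsp_connectivity}, namely that a $ck$-lsp-operation is sent to a $ck$-lopsp-operation. I expect this to follow directly from the definitions: being a $ck$-operation is a property of the underlying Delaney-Dress symbol (equivalently, of the associated tiling), and the standard passage from the lsp-symbol to its lopsp-counterpart does not change the relevant connectivity of that symbol. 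With this in hand it suffices to prove the corollary for $ck$-lopsp-operations.

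Second, given a $ck$-embedded graph $G$ and a $ck$-lopsp-operation $O$, I would simply apply Theorem~\ref{thm:main}, instantiating its parameters at the given $k\in\{1,2,3\}$, and read off that $O(G)$ is again $ck$-embedded. The restriction $k\le 3$ is presumably precisely the range in which $ck$-embeddedness still encodes the intended connectivity behaviour, which explains why the corollary is stated only for those values; outside this range one would need a separate discussion of the kind hinted at for the more general operations at the end of the paper.

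The main obstacle is located entirely in Theorem~\ref{thm:main} rather than in the corollary: one has to control how a local operation interacts with short noncontractible curves and with the face-width of an arbitrary embedding, so that the operation cannot create a new forbidden small separation in the $ck$-sense. For the corollary itself the only genuine verification is the compatibility of the lsp-to-lopsp translation with the $ck$-grading; everything else is routine bookkeeping once Theorem~\ref{thm:main} is available.
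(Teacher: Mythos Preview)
Your proposal is correct and matches the paper's approach: the paper states that the corollary follows immediately from Theorem~\ref{thm:main}, and your two-step reduction is exactly how that deduction goes. For the lsp-to-lopsp step you can be fully precise by invoking Lemma~\ref{lem:lsp_to_lopsp}, which says $O(G)=O_{lopsp}(G)$ and that the associated tilings are combinatorially isomorphic, so the $ck$-grading of Definition~\ref{def:lopsp_connectivity} is preserved as you anticipated.
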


		This theorem is most interesting and relevant for $k=3$. This has two reasons. Firstly, the set of $c3$-operations contains all well-known and often used operations
                for which numerous results have already been proven. Lsp-operations that are not $c3$-lsp-operations were not even included in the original definition of
                lsp-operations \cite{gocox}. Secondly, $c3$-embedded graphs, which are in fact simple embedded 3-connected graphs of face-width at least three, have some very
                interesting properties, and are also known as polyhedral embeddings \cite{moh} (Proposition 3.9). They can be defined equivalently as simple embedded graphs where
                every facial walk is a simple cycle and two faces are either disjoint or their intersection consists of only one vertex or one edge. As the name suggests,
                polyhedral embeddings are a generalisation of polyhedra to surfaces of higher genus. It turns out that the key concept in this research is not 3-connectedness but
                polyhedrality, a property that is equivalent to being simple and 3-connected in the plane, but only in the plane. The main result of this article follows immediately from
                Corollary \ref{cor:k-conn_preserved}.
		
		\begin{theorem}\label{thm:pol_lopsp}
		  If $G$ is a polyhedral embedding of a graph and $O$ is a $c3$-lsp- or $c3$-lopsp-operation, then $O(G)$ is also a polyhedral embedding.
		\end{theorem}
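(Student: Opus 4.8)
The plan is that, once Corollary~\ref{cor:k-conn_preserved} is in hand, this theorem costs essentially nothing. By Definition~\ref{def:ckembed} a $c3$-embedded graph is a simple embedded $3$-connected graph of face-width at least three, and, as recalled above (see also \cite{moh}), an embedding has this property exactly when it is polyhedral --- equivalently, when the graph is simple, every facial walk is a simple cycle, and any two faces are either disjoint or meet in exactly one vertex or exactly one edge. So \emph{polyhedral} and \emph{$c3$-embedded} name the same class, and Theorem~\ref{thm:pol_lopsp} is precisely the $k=3$ instance of Corollary~\ref{cor:k-conn_preserved}. The only thing I would actually write in the proof is this identification of the two notions; the rest is a citation of the corollary.

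The substantive work therefore lies in Corollary~\ref{cor:k-conn_preserved}, i.e.\ in Theorem~\ref{thm:main}, and I would organise that as follows. Using the abstract description of $c3$-lsp- and $c3$-lopsp-operations in terms of Delaney-Dress symbols, $O(G)$ is built from $G$ by replacing every flag (chamber) of the embedding by one fixed finite \emph{patch} attached to the operation, the patches being glued together along the flag-adjacency structure of $G$. To show that $O(G)$ is again $c3$-embedded I would suppose it is not and take a minimal witness of one of the forbidden local configurations: a cut vertex, a non-trivial $2$-cut, a facial walk that visits some vertex twice, or two distinct faces sharing two vertices or two edges. The idea is then to \emph{project} the walks and separators involved down to $G$: unless the whole witness is confined to the interior of a single patch, it crosses patch boundaries, and following it across those boundaries produces a corresponding forbidden configuration in $G$, contradicting that $G$ is $c3$-embedded; and the case where the witness lies inside one patch is ruled out because the $c3$-hypothesis on the operation is exactly what forces each individual patch to be \emph{locally} $c3$. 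For lopsp-operations the argument is the same, except that one carries both mirror images of every patch along, since orientation-reversing symmetries of $G$ need not survive.

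The main obstacle is the projection step. There is no graph morphism between $O(G)$ and $G$ in either direction, and a short closed walk or a small separator in $O(G)$ may thread through many patches before it closes up, so some genuine combinatorial bookkeeping --- tracking exactly how such a walk or separator meets the patch boundaries, and proving it cannot ``hide'' just inside a boundary without leaving a trace in $G$ --- is unavoidable, and this is where the $c3$-condition on the operation is really consumed. A prerequisite is that the operations are well defined, so that $O(G)$ is an unambiguous object; this is established separately in the paper. With well-definedness and Corollary~\ref{cor:k-conn_preserved} both available, Theorem~\ref{thm:pol_lopsp} follows immediately.
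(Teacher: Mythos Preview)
Your reduction of Theorem~\ref{thm:pol_lopsp} to the $k=3$ case of Corollary~\ref{cor:k-conn_preserved} via the identification ``polyhedral $=$ $c3$-embedded'' is exactly the paper's argument; the identification itself is isolated as a separate lemma there.

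Where your sketch of the underlying Theorem~\ref{thm:main} diverges is in the choice of witness. You propose to take a cut vertex, a $2$-cut, a repeated vertex in a facial walk, or a pair of faces with large intersection, and project it to $G$. The paper instead first proves a uniform criterion (Lemma~\ref{lem:conn_characterisation}): $G$ is $c2$-embedded iff $B_G$ has no $2$-cycle, and $c3$-embedded iff in addition $B_G$ has no nontrivial $4$-cycle. Every failure thus becomes a single short cycle in $B_{O(G)}$, and the localisation step (Theorems~\ref{thm:cycle_butterfly_2} and~\ref{thm:cycle_butterfly_3}) shows, via chamber flips and a minimal cut-path, that such a cycle can always be confined to at most two adjacent copies of the double-chamber patch $O_P$ --- a configuration that no longer depends on $G$, which is precisely what drives the three-way equivalence in Theorem~\ref{thm:main}. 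One ingredient you do not mention is Lemma~\ref{lem:fw_G_leq_fw_O(G)} ($fw(O(G))\ge fw(G)$), needed so that the cycle criterion is available for $O(G)$ in the first place. Your separator-projection approach is plausible in outline but would have to handle several shapes of witness case by case; the barycentric-cycle route buys a single object to track and makes the ``independent of $G$'' conclusion clean.
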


		\section{Basic definitions}
                
		Embedded graphs are often studied as topological embeddings into surfaces. However, in this text we will use a combinatorial description. We will only consider
                embeddings in orientable surfaces. Though we are mainly interested in simple graphs, graphs with multiple edges and loops will occur in a natural way -- e.g. as tools
                or as the result of an operation -- so that
                we will in general assume that a graph can have multiple edges and loops and explicitly restrict the class where necessary. For simplicity we will also refer to
                the {\em sets} of vertices and edges, though the edges can of course form a multiset.
	
                With every edge $e$ with different endpoints $v,w$ of a graph $G$, we associate two oriented edges, $e_v$ starting at $v$ and $e_v^{-1}=e_w$ starting at $w$.
                In case of a loop, that is $v=w$, we associate two oriented edges, $e_v, (e_v)'$ starting at $v$ and write $e_v^{-1}=(e_v)'$ and $(e_v)'^{-1}=e_v$ 
                If $G$ is simple or if there is no danger of misunderstandings, we denote these edges as $e_v=(v,w)$ and $e_w=(w,v)=(v,w)^{-1}$. For every vertex $v$ of $G$, a cyclic order
                is assigned to all oriented edges $e_v$. This way, all oriented edges $e_v$ (including the edges $(e_v)'$) have a `successor' $\sigma_G(e_v)$. An \emph{embedded graph} is a
                connected graph together with this successor
                function $\sigma_G()$. To keep notation simple we will use the same notation for the abstract graph $G$ and the embedded graph $G$. It will always be clear from the
                context whether a graph is embedded or not, and how it is embedded. When drawing combinatorial embeddings, the cyclic order around the vertices corresponds to the
                clockwise order of edges around that vertex in the drawing.
			
		Consider three oriented edges $e_1, e_2$, and $e_3$ incident with a vertex $v$. We say that $e_2$ is \emph{between} $e_1$ and $e_3$ if $e_1,e_2,e_3$ occur in this
                order as part of the cyclic order around $v$.

		We say that $e$ and $\sigma_G(e^{-1})$ form an \emph{angle}. A \emph{face} is a cyclic sequence of oriented edges such that every two
                consecutive edges form an angle. We will use the term \emph{facial walk} to refer to the closed walk in $G$ corresponding to this cyclic sequence of oriented
                edges. The \emph{boundary} of $f$ is the subgraph of $G$ consisting of all the vertices and edges in the facial walk of $f$.
			
		For this embedded graph $G$, with $V_G$, $E_G$ and $F_G$ denoting the sets of vertices, edges and faces of $G$ respectively, $\chi(G)= |V_G| - |E_G| +
                |F_G|$\index{$\chi(G)$} is the \emph{Euler characteristic}\index{Euler characteristic} of $G$.
			
		The \emph{genus} of $G$ is now defined as $gen(G)= \frac{2-\chi(G)}{2}$\index{$gen(G)$}. If an embedded graph has genus 0, it is called \emph{plane}.
	
		\begin{definition}\label{def:kembed}
                  
		Let $G$ be an embedded graph and $G'$ be a subgraph of $G$. The graph $G'$ with the embedding induced by that of $G$ is called an \emph{embedded subgraph}\index{subgraph!embedded}.
		
		We will now define what a \emph{bridge}\index{bridge} for $G'$ in $G$ is. There are two kinds of bridges:
		\begin{itemize}
			\item If $e \in E_G\setminus E_{G'}$ is an edge with endpoints $v,w \in V_{G'}$, then the graph with vertex set $\{v,w\}$ and edge set $\{e\}$ is a bridge.
			\item Let $C$ be a component of the subgraph of $G$ induced by the vertices of $G$ that are not in $G'$, and define $E_C'=\{e \in E_G \mid e \cap V_C \neq
                          \emptyset \}$ and $V_C' = \{v\in V'\mid \exists e\in E_C' : v\in e\}$. Then the graph with vertex set $V_C'$ and edge set $E_C'$ is a bridge.
		\end{itemize}

		If a bridge has an edge that is between two edges $e$ and $e'$ so that $e^{-1}$ and $e'$ form an angle in a face of $G'$, then the bridge \emph{is in
                  that face}. All the vertices and edges of the bridge are also said to be \emph{in the face}. A vertex or edge of $G$ is in the \emph{interior} of a face of $G'$
                if it is in that face and it is not in the boundary.
	
		If a bridge is in more than one face, we say that those faces are \emph{bridged}\index{face!bridged}. A face that is not bridged is called \emph{simple}\index{face!simple}.
		
		Let $f$ be a simple face of $G'$. We will define the \emph{internal component}\index{internal component} of $f$ as follows. Start with the graph $H$ that consists of
                the boundary of $f$ together with all bridges in $f$. Then replace every vertex $v$ of $H$ that appears $k>1$ times in the facial walk of $f$ by $k$ pairwise
                different vertices $v_1,...,v_k$.  If both oriented edges associated with an edge of $G'$ appear in the facial walk, this edge is also split into two different edges
                between different copies of its vertices.  Let $(x,v)$ and $(v,y)$ be the oriented edges that form the angle (in $G'$) at the $i$-th occurrence of $v$. Then we
                define the rotational order (and also the neighbours) of $v_i$ to be the same as the rotational order around $v$ (in $G$) but restricted to the edges between
                $(v,x)$ and $(v,y)$ and of course some vertices possibly replaced by their copies.
                The result of this is the internal component $IC(f)$\index{$IC(f)$} of $f$. An example of an internal component is illustrated in Figure
                \ref{fig:internal_component}. If $IC(f)$ is plane, we call $f$ \emph{internally plane}\index{face!internally plane}.
		
		\begin{figure}	
				\centering
				\begin{tikzpicture}[scale=0.4]
\tikzset{every node/.style={shape=circle, draw=blue, scale=0.5}}
\tikzset{face/.style={->, thick, draw=red}}
\tikzset{red/.style={thick, draw=red}}
\tikzset{rednode/.style={fill=red, draw=red}}

\node (v5) at (1,2) {};
\node (v6) at (5,0.5) {};
\node (v1) at (3.5,-2) {};
\node (v2) at (2,-4) {};
\node (v3) at (-4,-3) {};
\node (v4) at (-6,0.5) {};
\node (v8) at (1,0.7) {};
\node(v7) at (-3,0.5) {};
\node (v9) at (-3.5,2.5) {};
\node (v10) at (-1,-0.5) {};
\node (v11) at (-2,-2) {};

\draw[very thick] (v1) -- (v2) -- (v3) -- (v4) -- (v9) -- (v5) -- (v6) -- (v1);
\draw[very thick] (v1) -- (v10) -- (v11) -- (v1);

\draw (v11) edge (v2);

\begin{scope}
\draw (v4) -- (v7) -- (v8) -- (v1);
\draw  (v8) edge (v5);
\draw  (v7) edge (v9);
\draw  (v7) edge (v3);
\end{scope}

\end{tikzpicture}
				\qquad
				\begin{tikzpicture}[scale=0.4]
\tikzset{every node/.style={shape=circle, draw=blue, scale=0.5}}
\tikzset{face/.style={->, thick, draw=red}}
\tikzset{red/.style={thick, draw=red}}
\tikzset{rednode/.style={fill=red, draw=red}}

\node (v5) at (1,2) {};
\node (v6) at (5,0.5) {};
\node (v1) at (3.5,-2) {};
\node (v1') at (3.5,-1.3) {};
\node (v2) at (2,-4) {};
\node (v3) at (-4,-3) {};
\node (v4) at (-6,0.5) {};
\node (v8) at (1,0.7) {};
\node(v7) at (-3,0.5) {};
\node (v9) at (-3.5,2.5) {};
\node (v10) at (-1,-0.5) {};
\node (v11) at (-2,-2) {};

\draw[very thick] (v1) -- (v2) -- (v3) -- (v4) -- (v9) -- (v5) -- (v6) -- (v1')-- (v10) -- (v11) -- (v1);

\draw (v11) edge (v2);

\begin{scope}
\draw (v4) -- (v7) -- (v8) -- (v1');
\draw  (v8) edge (v5);
\draw  (v7) edge (v9);
\draw  (v7) edge (v3);
\end{scope}

\end{tikzpicture}

			        \caption{\label{fig:internal_component}A graph and the internal component of its face with bridges.}
		\end{figure}
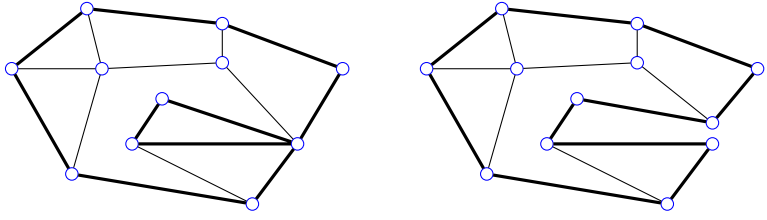
	\end{definition}

	A very important concept in the definition of lsp- and lopsp-operations is the barycentric subdivision of an embedded graph. It is obtained by subdividing every face into triangular faces, which we
        will call chambers. We will also use the barycentric subdivision to define contractible cycles and face-width in a combinatorial way.
	
	\begin{definition}		
		The \emph{barycentric subdivision}\index{barycentric subdivision} $B_G$ of an embedded graph $G$ is an embedded graph that has a unique vertex for every vertex, for
                every edge and for every face of $G$. We say that these vertices are of type 0, 1, and 2 respectively. There are edges between vertices of type 0 and 1 if the
                corresponding vertex and edge are incident. There are edges between vertices of type 0 or 1 and type 2 if the corresponding vertex or edge appears in the boundary of the
                corresponding face. There are no edges between vertices of the same type. For $i\in \{0,1,2\}$, an edge is \emph{of type $i$} if it is not incident with a vertex of
                type $i$. We will also refer to vertices and edges of type $i$ as $i$-vertices and $i$-edges. The rotational order around the vertices of type 2 is the inverse of
                the order of vertices and edges in the corresponding face. Around the vertices of type 0 the rotational order follows the alternating sequence
                of edges and angles (representing the faces they are contained in) around the vertex and analogously for vertices of type 1.
                This is illustrated in Figure \ref{fig:barycentric}.
                With this embedding, a short calculation of the Euler
                characteristic shows that $gen(B_G)=gen(G)$. To keep notation simple, we will often use the same names for the faces, edges and vertices of $G$ and their
                corresponding vertices in $B_G$.
		
		\begin{figure}
			\centering
			\begin{tikzpicture}[scale=1.1]
\tikzset{normal/.style={shape=circle, draw=black, scale=0.4, fill=black}}
\tikzset{type2/.style={shape=circle, draw=red, scale=0.5}}
\tikzset{noNode/.style={draw=none}}
\tikzset{0edge/.style={draw=red}}
\node[normal] (v1) at (-1,0) {};
\node[normal] (v4) at (1,0) {};
\node[normal] (v6) at ({cos(120)}, {-sin(120)}) {};
\node[normal] (v3) at ({cos(60)}, {sin(60)}) {};
\node[normal] (v5) at ({cos(60)}, {-sin(60)}) {};
\node[normal] (v2) at ({cos(120)}, {sin(120)}) {};

\node [noNode] (v7) at (-2, 0) {};
\node [noNode] (v8) at ({2*cos(120)}, {2*sin(120)}) {};
\node [noNode] (v9) at ({2*cos(60)}, {2*sin(60)}) {};
\node [noNode] (v10) at (2, 0) {};
\node [noNode] (v11) at ({2*cos(60)}, {-2*sin(60)}) {};
\node [noNode] (v12) at ({2*cos(120)}, {-2*sin(120)}) {};
\begin{scope}[thick]
\draw (v1) edge (v7);
\draw (v2) edge (v8);
\draw (v3) edge (v9);
\draw (v4) edge (v10);
\draw (v5) edge (v11);
\draw (v6) edge (v12);
\draw (v1) -- (v2) -- (v3) -- (v4) -- (v5) -- (v6) -- (v1);
\end{scope}
\end{tikzpicture}
			\quad
			\begin{tikzpicture}[scale=1.1]
\tikzset{normal/.style={shape=circle, draw=red, scale=0.4,fill=red}}
\tikzset{type1/.style={shape=circle, draw=black!30!green, scale=0.3,fill=black!30!green}}
\tikzset{type2/.style={shape=circle, draw=black, scale=0.3,fill=black}}
\tikzset{0edge/.style={draw=red, thin}}
\tikzset{1edge/.style={draw=black!40!green}}
\tikzset{2edge/.style={very thick}}

\node[normal] (v1) at (-1,0) {};
\node[normal] (v4) at (1,0) {};
\node[normal] (v6) at ({cos(120)}, {-sin(120)}) {};
\node[normal] (v3) at ({cos(60)}, {sin(60)}) {};
\node[normal] (v5) at ({cos(60)}, {-sin(60)}) {};
\node[normal] (v2) at ({cos(120)}, {sin(120)}) {};

\node[type1] (e1) at ({(cos(120)-1)/2}, {sin(60)/2}) {};
\node[type2] (f1) at ({(cos(120)-1)}, {sin(60)}) {};
\node[type1] (e2) at (0, {sin(60)}) {};
\node[type2] (f2) at (0, {2*sin(60)}) {};
\node[type1] (e3) at({(1 - cos(120))/2}, {sin(60)/2}) {};
\node[type2] (f3) at ({(-cos(120)+1)}, {sin(60)}) {};
\node[type1] (e4) at({(1 - cos(120))/2}, {-sin(60)/2}) {};
\node[type2] (f4) at ({(-cos(120)+1)}, {-sin(60)}) {};
\node[type1] (e5) at({(0}, {-sin(60)}) {};
\node[type2] (f5) at (0, {-2*sin(60)}) {};
\node[type1] (e6) at ({(cos(120)-1)/2}, {-sin(60)/2}) {};
\node[type2] (f6) at ({(cos(120)-1)}, {-sin(60)}) {};

\node[type2] (f) at (0, 0) {};

\node [type1] (v7) at (-2, 0) {};
\node [type1] (v8) at ({2*cos(120)}, {2*sin(120)}) {};
\node [type1] (v9) at ({2*cos(60)}, {2*sin(60)}) {};
\node [type1] (v10) at (2, 0) {};
\node [type1] (v11) at ({2*cos(60)}, {-2*sin(60)}) {};
\node [type1] (v12) at ({2*cos(120)}, {-2*sin(120)}) {};

\begin{scope}[0edge]
\draw (f) -- (e1)--(f1);
\draw (f) -- (e2)--(f2);
\draw (f) -- (e3)--(f3);
\draw (f) -- (e4)--(f4);
\draw (f) -- (e5)--(f5);
\draw (f) -- (e6)--(f6);
\draw (f1)--(v8)--(f2)--(v9)--(f3)--(v10)--(f4)--(v11)--(f5)--(v12)--(f6)--(v7)--(f1);
\end{scope}

\begin{scope}[1edge]
\draw (f) edge (v1);
\draw (f) edge (v2);
\draw (f) edge (v3);
\draw (f) edge (v4);
\draw (f) edge (v5);
\draw (f) edge (v6);
\draw (v1)--(f1)--(v2)--(f2)--(v3)--(f3)--(v4)--(f4)--(v5)--(f5)--(v6)--(f6)--(v1);
\end{scope}

\begin{scope}[2edge]
\draw (v1) -- (e1) -- (v2) -- (e2) -- (v3) -- (e3) -- (v4) -- (e4) -- (v5) -- (e5) -- (v6) -- (e6) -- (v1);
\draw  (v1) edge (v7);
\draw  (v2) edge (v8);
\draw (v3) edge (v9);
\draw (v4) edge (v10);
\draw (v5) edge (v11);
\draw (v6) edge (v12);
\end{scope}

\end{tikzpicture}
			\caption{\label{fig:barycentric}A face in an embedded graph $G$ and the corresponding part of $B_G$. Types are represented by colours in the order rgb, that is: a red vertex or
                          edge is type 0, a green one is type 1 and a black one is type 2.}
		\end{figure}
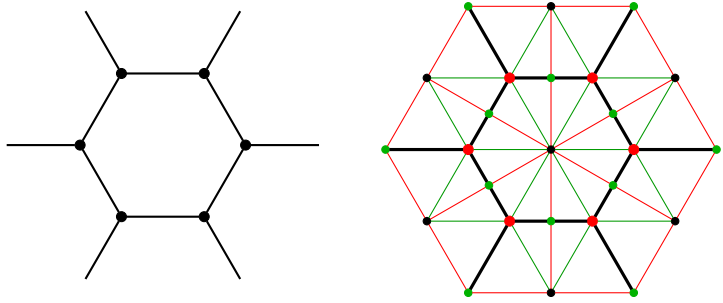
		
		Every face of $B_G$ is a triangle, with exactly one vertex and one edge of each type. We call such a triangle a \emph{chamber}\index{chamber}.
                Let $\Sigma=\left\langle\sigma_0, \sigma_1, \sigma_2 \mid \sigma_i^2=1\right\rangle$ denote the free Coxeter group with generators $\sigma_0$, $\sigma_1$ and $\sigma_2$.
                For $0\le i \le 2$ we define $C\sigma_i=C'$ if the chambers $C$ and $C'$ share their $i$-edges. This way $B_G$ defines a chamber system that we also denote as $B_G$.
		
		The \emph{double chamber system} $D_G$ of a graph $G$ is the subgraph of $B_G$ that only contains the edges of types 1 and 2.
		A \emph{double chamber} of a graph $G$ is a face in $D_G$. Every double chamber has length four: two (in case of no loops different) vertices of type 0, one of type 1, and one of type 2.
	\end{definition}

        \section{Delaney-Dress symbols and a combinatorial description of lsp- and lopsp-operations}\label{sec:delaney-dress}

        In a series of papers, Andreas Dress (in later papers together with coauthors) developed a finite symbol encoding the topology as well as the symmetry of periodic tilings. He attributed the idea to Matthew Delaney
        and called these symbols {\em Delaney symbols}. In later papers by other authors, these symbols are called {\em Delaney-Dress symbols}. In \cite{olaf_ddsymbols} and \cite{DressHuson87} Delaney-Dress
        symbols of periodic tilings of the Euclidean plane and the hyperbolic plane are characterized.

        As a topological definition of tilings falls outside the scope of this article, we will directly start with the combinatorial characterization in \cite{olaf_ddsymbols},\cite{DressHuson87}. We will sketch the connection
        to tilings, but for a detailed description we refer the reader to \cite{olaf_ddsymbols} or \cite{DressHuson87}.

\begin{definition}\label{def:dd_symbol}
	
	Let $\mathcal{D}$ be a set together with an action  (from the right) of $\Sigma$ on $\mathcal{D}$, and for $(i,j)\in \{(0,1),(0,2),(1,2)\}$ let $m_{ij} : \mathcal{D}\rightarrow \mathbb{N}$ be maps with
        $m_{02}(C)=2$ for all $C\in \mathcal{D}$. The tuple $(\mathcal{D}, m_{01}, m_{02}, m_{12})$ is a \emph{Delaney-Dress symbol} of the Euclidean plane if the following properties hold:
	
	\begin{enumerate}
		\item $\mathcal{D}$ has finitely many elements
		\item $\Sigma$ acts transitively on $\mathcal{D}$
		\item For $i,j\in \{0,1,2\}, i<j$, $m_{ij}()$ is constant on $\langle \sigma_i,\sigma_j\rangle$-orbits and $C(\sigma_i\sigma_j)^{m_{ij}(C)}=C$ for all $C\in \mathcal{D}$
		\item For the {\em curvature} $\mathcal{C}(\mathcal{D}, m_{01}, m_{02}, m_{12})$ we have\\
                  $\displaystyle	
		\mathcal{C}(\mathcal{D}, m_{01}, m_{02}, m_{12})=\sum\limits_{C\in\mathcal{D}}\left(\frac{1}{m_{01}(C)} + \frac{1}{m_{12}(C)} - \frac{1}{m_{02}(C)}\right)=0$
	\end{enumerate}

\end{definition}

Such Delaney-Dress symbols encode the combinatorial structure of periodic tilings of the Euclidean plane together with a symmetry group acting on the tiling.
If $\mathcal{Cv}(\mathcal{D}, m_{01}, m_{02}, m_{12})\not=0$,
the Delaney-Dress symbol codes a periodic tiling of the hyperbolic plane ($\mathcal{Cv}<0$) or -- in case additional divisibility rules are fulfilled -- the sphere ($\mathcal{Cv}>0$) \cite{olaf_ddsymbols}. 
The elements of $ \mathcal{D}$ are the orbits of
chambers of the tiling under the symmetry group. An element $C\in \mathcal{D}$ with $C\sigma_i=C$ represents an orbit of chambers with mirror symmetries stabilizing the edges of type $i$.
If there are no $C\in \mathcal{D}$ with $C\sigma_i=C$, the symmetry group contains no pure reflections, but maybe sliding reflections.
If there are no odd cycles, that is $C\sigma_{i_1}\dots \sigma_{i_k}\not= C$ for odd $k$, all symmetries are orientation preserving.
Also the maps $m_{01}$ and $m_{12}$ give information about the symmetry group of that tiling. Let $\{i,j,k\}=\{0,1,2\}$, $i<j$ and for $C \in \mathcal{D}$ let $r_{ij}(C)=\min\{r \mid C(\sigma_i\sigma_j)^r=C\}$.
Note that $r_{ij}()$ is constant on $\langle \sigma_i, \sigma_j\rangle$-orbits. If 
a $\langle \sigma_i,\sigma_j\rangle$-orbit $C^{\langle \sigma_i,\sigma_j\rangle}$  contains no $C'$ with
$C'\sigma_i=C'$ or $C'\sigma_j=C'$, then the vertices of type $k$ of the corresponding chambers in the tiling are centers of an $f_r$-fold rotation
with $f_r=m_{ij}(C)/r_{ij}(C)$.
If an orbit $C^{\langle \sigma_i,\sigma_j\rangle}$ contains a $C'$ with
$C'\sigma_i=C'$ or $C'\sigma_j=C'$, then with $f_m=2m_{ij}(C)/r_{ij}(C)$ for $f_m>1$ the vertices of type $k$ of the chambers in orbit $C$ are intersections of mirror axes with an angle
of $360/f_m$ degrees.

Later it will become clear (but not formally proven) that the Delaney-Dress symbol that is associated with lsp- and lopsp-operations in Theorem \ref{thm:lsp_is_dd} and Theorem
\ref{thm:lopsp_is_dd} is that of the tiling $T$ that can be obtained by applying the l(op)sp-operation to the (infinite) embedded graph associated with the regular hexagonal tiling
and choosing an appropriate symmetry group.  In this graph, every vertex has degree three and every face has six edges. One could replace the values 3 and 6 used for the $m_{ij}$
in the definitions of Delaney-Dress symbols corresponding to lopsp-operations by for example 4 and 4, and Theorem \ref{thm:lopsp_is_dd} would still be true. It would however be the
Delaney-Dress symbol of the tiling that can be obtained by applying the lopsp-operation to the regular square tiling, which is 4-regular and every face has 4 edges. By using other
numbers, other tilings -- even spherical or hyperbolic ones -- could be used as source tilings in \cite{gocox}.

We will now define lsp- and lopsp-operations and by showing the correspondence with Delaney-Dress symbols prove that they are equivalent to the operations defined in \cite{gocox}.

In \cite{gocox}, lsp- and lopsp-operations are defined as triangles `cut' out of the barycentric subdivision of a 3-connected tiling of the plane, such that in case of
lsp-operations the sides of the triangle are on symmetry axes of the tiling. In this article we give more direct definitions of lsp- and lopsp-operations, similar to
\cite{goetschalckx2020generation} and \cite{lopsp2020}, that are equivalent to those in \cite{gocox} when restricted to what we will later call $c3$-operations.  Later we
will prove that with this definition, lopsp-operations and lsp-operations can be associated with the Delaney-Dress symbol of a tiling of the Euclidean plane. 

\begin{definition}\label{def:lsp}

  Let $O$ be a 2-connected plane embedded graph with vertex set $V$, together with a labeling function $t: V \rightarrow \{0,1,2\}$ and an outer face which contains three special vertices marked as
  $v_0$, $v_1$ and $v_2$. We say that a vertex $v$ is of \emph{type} $i$ if $t(v)=i$. 
  This embedded graph $O$ is a \emph{local symmetry preserving operation}, lsp-operation for short, if the following properties hold:
	\begin{enumerate}
		\item Every inner face is a triangle
		\item There are no edges between vertices of the same type.
		\item For each vertex that is not in the outer face:
		\begin{align*}
			t(v)=1 &\Rightarrow deg(v)=4\\
		\end{align*}
		For each vertex $v$ in the outer face, different from $v_0$, $v_1$ and $v_2$:
		\begin{align*}
			t(v)=1 &\Rightarrow deg(v)=3\\
		\end{align*}
		and
		\[t(v_0),t(v_2)\neq 1\]
		\begin{align*}
			t(v_1)=1 &\Rightarrow deg(v_1)=2\\
		\end{align*}
	\end{enumerate}
	
	\noindent
	Just like for barycentric subdivisions we say that an edge is of \emph{type $i$} if it is not incident to a vertex of type $i$. This is well-defined because of the second property.
	
	Every inner face has exactly one vertex and one edge of each type. We will refer to these triangular faces as \emph{chambers}.

\end{definition}

In the original paper \cite{gocox} only operations that preserve 3-connectivity of polyhedra were discussed, so the result of the operation also had to have only vertices of degree
at least 3. In \cite{goetschalckx2020generation} also operations were discussed that produced graphs with 1- or 2-cuts, but the restriction that vertices in the result
should have degree at least 3 was kept, so the definition in \cite{goetschalckx2020generation} differs slightly from the one given here.

{\bf How to apply an lsp-operation:} Let $O$ be an lsp-operation and let $G$ be an embedded graph. The operation is applied to $G$ by
first replacing the 0-edges of $B_G$ by copies of the boundary path in $O$ from $v_2$ to $v_1$ not containg $v_0$. The copy of $v_2$ is identified with the 2-vertex and the
copy of $v_1$ with the 1-vertex. Analogously the 1-edges of $B_G$ are replaced by copies of the boundary path from $v_2$ to $v_0$ not containg $v_1$, and
the 2-edges of $B_G$ are replaced by copies of the boundary path from $v_1$ to $v_0$. Then -- depending on the orientation --
        either a copy of $O$ or a copy of the mirror image of $O$ is glued into every chamber of $B_G$, identifying the boundary vertices with their copies.
        This results in a triangulation -- that is: an embedded graph where all faces are triangles --
        with labeled vertices. This triangulation is the barycentric subdivision of an embedded graph $O(G)$, the \emph{result} of applying $O$ to $G$.
        An example application -- restricted to a single face -- is given in Figure~\ref{fig:truncation_applied}.

\begin{figure}
	\centering
	\begin{tikzpicture}[scale=2]
\tikzset{type0/.style={shape=circle, draw=red, scale=0.4, fill=white}}
\tikzset{type1/.style={shape=circle, draw=black!30!green, scale=0.4, fill=white}}
\tikzset{type2/.style={shape=circle, draw=black, scale=0.4, fill=white}}
\tikzset{0edge/.style={draw=red, very thin}}
\tikzset{1edge/.style={draw=black!40!green, very thin}}
\tikzset{2edge/.style={very thick}}

\fill[blue!10!white] (0,0) -- (0, {cos(15)}) -- ({cos(15)*tan(30)}, {cos(15)}) -- cycle;

\node[type2] (v2) at (0, 0) {};

\node[type1] (v1) at (0, {cos(15)}) {};
\node[type1] (v11) at ({cos(15) *cos(30)}, {cos(15)*sin(30)}) {};
\node[type1] (v12) at ({cos(15) *cos(30)}, {-cos(15)*sin(30)}) {};
\node[type1] (v13) at (0, {-cos(15)}) {};
\node[type1] (v14) at ({-cos(15) *cos(30)}, {-cos(15)*sin(30)}) {};
\node[type1] (v15) at ({-cos(15) *cos(30)}, {cos(15)*sin(30)}) {};

\node[type0] (v0) at ({cos(15)*tan(30)}, {cos(15)}) {};
\node[type0] (v01) at ({cos(15)/sin(60)}, 0) {};
\node[type0] (v02) at ({cos(15)*tan(30)}, {-cos(15)}) {};
\node[type0] (v03) at ({-cos(15)*tan(30)}, {-cos(15)}) {};
\node[type0] (v04) at ({-cos(15)/sin(60)}, 0) {};
\node[type0] (v05) at ({-cos(15)*tan(30)}, {cos(15)}) {};

\begin{scope}[0edge]
\draw[ultra thick] (v1) -- (v2) ;
\draw (v2) -- (v11);
\draw (v12) -- (v2) -- (v13);
\draw (v14) -- (v2) -- (v15);
\end{scope}

\begin{scope}[2edge]
\draw[ultra thick] (v1) -- (v0);
\draw (v0) --(v11)--(v01) -- (v12) -- (v02) -- (v13) 
-- (v03) -- (v14) -- (v04) -- (v15) -- (v05) -- (v1);
\end{scope}

\begin{scope}[1edge]
\draw[ultra thick] (v0) -- (v2) ;
\draw (v2) -- (v01);
\draw (v02) -- (v2) -- (v03);
\draw (v04) -- (v2) -- (v05);
\end{scope}

\end{tikzpicture} \qquad \begin{tikzpicture}[scale = 3]
\tikzset{type0/.style={shape=circle, draw=red, scale=0.4, fill=white}}
\tikzset{type1/.style={shape=circle, draw=black!30!green, scale=0.4, fill=white}}
\tikzset{type2/.style={shape=circle, draw=black, scale=0.4, fill=white}}
\tikzset{noNode/.style={draw=none}}
\tikzset{0edge/.style={draw=red, thin}}
\tikzset{1edge/.style={draw=black!40!green, thin}}
\tikzset{2edge/.style={very thick}}

\node[type2] (v2) at (0, 1) {};
\node[type1] (v1) at (0, 0) {};
\node[type2] (v0) at ({tan(30)}, 0) {};
\node[type1] (e) at ({sin(30)},{1 - cos(30)}) {};
\node[type0] (v) at ({tan(15)}, {0})  {};

\begin{scope}[0edge]
\draw (v1) edge (v2);
\draw (e) edge (v2);
\draw (v0) edge (e);
\end{scope}

\begin{scope}[1edge]
\draw (v) edge (v0);
\draw (v) edge (v2);
\end{scope}

\begin{scope}[2edge]
\draw (v1) edge (v);
\draw (e) edge (v);
\end{scope}

\node[label=150:{$v_2$}] at (0, 1) {};
\node[label=190:$v_1$] at (0, 0) {};
\node[label={-10:$v_0$}] at ({tan(30)}, 0) {};
\end{tikzpicture}\qquad 
	\begin{tikzpicture}[scale=2]
\tikzset{type0/.style={shape=circle, draw=red, scale=0.4, fill=white}}
\tikzset{type1/.style={shape=circle, draw=black!30!green, scale=0.4, fill=white}}
\tikzset{type2/.style={shape=circle, draw=black, scale=0.4, fill=white}}
\tikzset{0edge/.style={draw=red, very thin}}
\tikzset{1edge/.style={draw=black!40!green, very thin}}
\tikzset{2edge/.style={very thick}}

\fill[blue!10!white] (0,0) -- (0, {cos(15)}) -- ({cos(15)*tan(30)}, {cos(15)}) -- cycle;

\node[type2] (v2) at (0, 0) {};

\node[type1] (v1) at (0, {cos(15)}) {};
\node[type1] (v11) at ({cos(15) *cos(30)}, {cos(15)*sin(30)}) {};
\node[type1] (v12) at ({cos(15) *cos(30)}, {-cos(15)*sin(30)}) {};
\node[type1] (v13) at (0, {-cos(15)}) {};
\node[type1] (v14) at ({-cos(15) *cos(30)}, {-cos(15)*sin(30)}) {};
\node[type1] (v15) at ({-cos(15) *cos(30)}, {cos(15)*sin(30)}) {};

\node[type2] (v0) at ({cos(15)*tan(30)}, {cos(15)}) {};
\node[type2] (v01) at ({cos(15)/sin(60)}, 0) {};
\node[type2] (v02) at ({cos(15)*tan(30)}, {-cos(15)}) {};
\node[type2] (v03) at ({-cos(15)*tan(30)}, {-cos(15)}) {};
\node[type2] (v04) at ({-cos(15)/sin(60)}, 0) {};
\node[type2] (v05) at ({-cos(15)*tan(30)}, {cos(15)}) {};

\node[type1] (e) at ({cos(15)*sin(30)},{cos(15)*cos(30)}) {};
\node[type1] (e1) at ({cos(15)},0) {};
\node[type1] (e2) at ({cos(15)*sin(30)},{-cos(15)*cos(30)}) {};
\node[type1] (e3) at ({-cos(15)*sin(30)},{-cos(15)*cos(30)}) {};
\node[type1] (e4) at ({-cos(15)},0) {};
\node[type1] (e5) at ({-cos(15)*sin(30)},{cos(15)*cos(30)}) {};

\node[type0] (v) at ({sin(15)}, {cos(15)})  {};
\node[type0] (v') at ({cos(45)}, {sin(45)})  {};
\node[type0] (v_1) at ({cos(15)}, {sin(15)})  {};
\node[type0] (v_1') at ({cos(15)}, {-sin(15)})  {};
\node[type0] (v_2) at ({cos(45)}, {-sin(45)})  {};
\node[type0] (v_2') at ({sin(15)}, {-cos(15)})  {};
\node[type0] (v_3) at ({-sin(15)}, {-cos(15)})  {};
\node[type0] (v_3') at ({-cos(45)}, {-sin(45)})  {};
\node[type0] (v_4') at ({-cos(15)}, {sin(15)})  {};
\node[type0] (v_4) at ({-cos(15)}, {-sin(15)})  {};
\node[type0] (v_5') at ({-sin(15)}, {cos(15)})  {};
\node[type0] (v_5) at ({-cos(45)}, {sin(45)})  {};

\begin{scope}[0edge]
\draw[very thick] (v0) -- (e) -- (v2) -- (v1);
\draw (v01) -- (e1) -- (v2) -- (v11);
\draw (v02) -- (e2) -- (v2) -- (v12);
\draw (v03) -- (e3) -- (v2) -- (v13);
\draw (v04) -- (e4) -- (v2) -- (v14);
\draw (v05) -- (e5) -- (v2) -- (v15);
\end{scope}

\begin{scope}[2edge]
\draw[ultra thick] (v1) --(v) -- (e);
\draw (v11) --(v') -- (e);
\draw (v11) --(v_1) -- (e1);
\draw (v12) --(v_1') -- (e1);
\draw (v12) --(v_2) -- (e2);
\draw (v13) --(v_2') -- (e2);
\draw (v13) --(v_3) -- (e3);
\draw (v14) --(v_3') -- (e3);
\draw (v14) --(v_4) -- (e4);
\draw (v15) --(v_4') -- (e4);
\draw (v15) --(v_5) -- (e5);
\draw (v1) --(v_5') -- (e5);
\end{scope}

\begin{scope}[1edge]
\draw[very thick] (v2) -- (v) -- (v0);
\draw (v2) -- (v') -- (v0);
\draw (v2) -- (v_1) -- (v01);
\draw (v2) -- (v_1') -- (v01);
\draw (v2) -- (v_2) -- (v02);
\draw (v2) -- (v_2') -- (v02);
\draw (v2) -- (v_3) -- (v03);
\draw (v2) -- (v_3') -- (v03);
\draw (v2) -- (v_4) -- (v04);
\draw (v2) -- (v_4') -- (v04);
\draw (v2) -- (v_5) -- (v05);
\draw (v2) -- (v_5') -- (v05);
\end{scope}

\end{tikzpicture}
	\caption{\label{fig:truncation_applied}On the left, the barycentric subdivision of a hexagonal face is shown. In the middle, the lsp-operation {\em truncation} is given and on the right the result of applying the operation. The blue shaded area shows one chamber of the original hexagon.}
\end{figure}
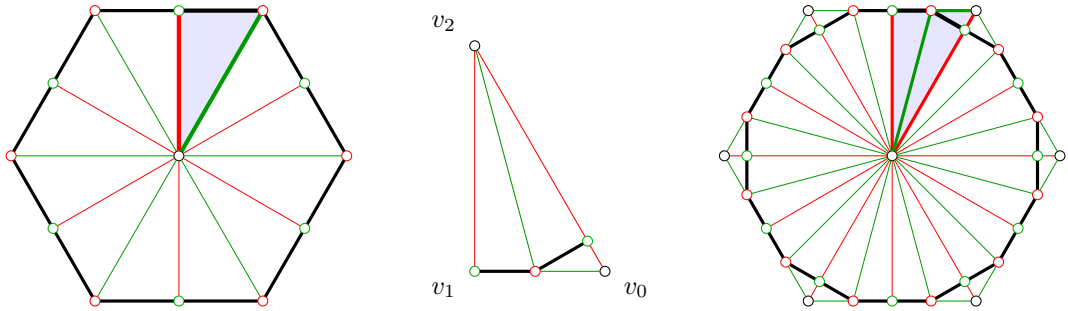

As any symmetry group acts on the chamber system, lsp-operations preserve all the symmetries of an embedded graph. Also new symmetries can occur and it is an open question whether
for polyhedra this can only occur for self-dual polyhedra. There are also interesting operations such as gyro and snub that only preserve the orientation
preserving symmetries. These cannot be described by lsp-operations. In the supplementary material of \cite{gocox} and in \cite{lopsp2020}, local orientation-preserving
symmetry preserving operations (lopsp-operations) are defined similarly to lsp-operations. The most important difference is that here the decoration is applied to double chambers
instead of chambers. As with lsp-operations, we will give a more direct definition of lopsp-operations that does not rely on tilings.

There are some problems that arise in the original definition of lopsp-operations that did not appear for lsp-operations. With the original definition, it is possible to cut
different patches out of a tiling that should describe the same operation and must be shown to have the same result. That is why we define a
lopsp-operation as a plane triangulation, similar to \cite{lopsp2020}, and not as a quadrangle that we can glue directly into double chambers. Nevertheless although this simplifies the definition of a lopsp-operation,
the same problem comes back when it is described how the operation is applied.

\begin{definition}\label{def:lopsp}

  Let $O$ be a 2-connected plane embedded graph with vertex set $V$, together with a labeling function $t: V \rightarrow \{0,1,2\}$ and three special vertices marked as $v_0$, $v_1$, and $v_2$. We say that a vertex is of \emph{type} $i$ if $t(v)=i$. 
  This embedded graph $O$ is a \emph{local orientation-preserving symmetry preserving operation}, lopsp-operation for short, if the following properties hold:
	\begin{enumerate}
		\item Every face is a triangle.
		\item There are no edges between vertices of the same type.
		\item For each vertex $v$ different from $v_0$, $v_1$, and $v_2$:
		\begin{align*}
		t(v)=1 &\Rightarrow deg(v)=4\\
		\end{align*}
	and
	\[t(v_0),t(v_2)\neq 1\]
	\begin{align*}
		t(v_1)=1 &\Rightarrow deg(v_1)=2\\
	\end{align*}
	\end{enumerate}

\noindent
Again we say that an edge is of \emph{type $i$} if it is not incident to a vertex of type $i$ and is this well-defined because of the second property. Note that the edges incident
with a vertex are of two different types, and as every face is a triangle, these types appear in alternating order in the cyclic order around the vertex. This implies that every
vertex has an even degree and therefore there are no odd cycles in the dual. The requirement that $O$ is 2-connected is mentioned in the beginning, but would in fact also follow
from the other conditions.

Again every face has exactly one vertex and one edge of each type and will be referred to as a \emph{chamber}.
\end{definition}

\begin{figure}
	\centering
	
	\begin{tikzpicture}[scale=2]
\tikzset{type0/.style={shape=circle, draw=red, scale=0.4, fill=white}}
\tikzset{type1/.style={shape=circle, draw=black!30!green, scale=0.4, fill=white}}
\tikzset{type2/.style={shape=circle, draw=black, scale=0.4, fill=white}}
\tikzset{t0/.style={draw=red, thin}}
\tikzset{t1/.style={draw=black!40!green, thin}}
\tikzset{t2/.style={thick}}

\node[type0, fill=red, scale=1.3,  label={[label distance=-0.02cm]90:{${v_0}$}}] (v0) at (0.5, 0) {};
\node[type1, fill=black!30!green, scale=1.3, label={[label distance=-0.06cm]-90:{$v_1$}}] (v1) at (0.25, -0.5) {};
\node[type0,  fill=red, scale=1.3, label={[label distance=-0.02cm]90:{$v_2$}}] (v2) at (0, 1) {};
\node[type1] (e) at (0, 0) {};
\node[type0] (v) at (-0.5, 0) {};
\node[type2] (f) at (1, 0) {};
\node[type1] (E) at (-1, 0) {};

\begin{scope}[t0]
\draw (f)  edge[bend left=90, looseness=1.5] (E) ;
\draw (f)  edge[bend right=85] (E) ;
\draw (f)  edge[bend left=50] (e) ;
\draw (e)  edge[bend left=50] (f) ;
\draw (f)  edge[bend left=30] (v1) ;
\end{scope}

\begin{scope}[t1]
\draw[ultra thick, dashed] (v0) -- (f);
\draw[ultra thick, dashed] (f)  edge[bend right=45] (v2) ;
\draw (v)  edge[bend left=70] (f) ;
\draw (v)  edge[bend right=80, looseness=1.5] (f) ;
\draw (v)  edge[bend right=55, looseness=1] (f) ;
\end{scope}

\begin{scope}[t2]
\draw[ultra thick, dashed] (v0) -- (e) -- (v) ;
\draw (v) -- (E);
\draw (v2)  edge[bend right=45] (E) ;
\draw[ultra thick, dashed] (v1)  edge[bend left=30] (v) ;
\end{scope}

\end{tikzpicture} \qquad
	\begin{tikzpicture}[scale=3.7]
\tikzset{type0/.style={shape=circle, draw=red, scale=0.4, fill=white}}
\tikzset{type1/.style={shape=circle, draw=black!30!green, scale=0.4, fill=white}}
\tikzset{type2/.style={shape=circle, draw=black, scale=0.4, fill=white}}
\tikzset{t0/.style={draw=red, thin}}
\tikzset{t1/.style={draw=black!40!green, thin}}
\tikzset{t2/.style={thick}}

\node[type0, fill=red, scale=1.3,  label={[label distance=-0.02cm]90:{$v_2$}}] (v) at (0, 0) {};


\node[type1] (e14) at ({cos(50+4*60)*cos(15)/cos(20)}, {sin(50+4*60)*cos(15)/cos(20)}) {};
\node[type0] (v14) at ({cos(40+4*60)*cos(15)/cos(10)}, {sin(40+4*60)*cos(15)/cos(10)}) {};
\node[type1, fill=green!70!black, scale=1.3, label={[label distance=-0.02cm]-90:{$v_1$}}] (e24) at ({cos(30+4*60)*cos(15)}, {sin(30+4*60)*cos(15)}) {};
\node[type0] (v24) at ({cos(20+4*60)*cos(15)/cos(10)}, {sin(20+4*60)*cos(15)/cos(10)}) {};
\node[type1] (e34) at ({cos(10+4*60)*cos(15)/cos(20)}, {sin(10+4*60)*cos(15)/cos(20)}) {};
\node[type0,  fill=red, scale=1.3,  label={[label distance=-0.02cm]120:{$v_{0,L}$}}] (v34) at ({cos(4*60)*(cos(15)/cos(30))}, {sin(4*60)*(cos(15)/cos(30))}) {};
\node[type1] (e44) at ({cos(40+4*60)*cos(15)/cos(10)/2}, {sin(40+4*60)*cos(15)/cos(10)/2}) {};
\node[type2] (f4) at ({cos(4*60)*(cos(15)/cos(30))/2}, {sin(4*60)*(cos(15)/cos(30))/2}) {};

\node[type2] (f5) at ({cos(5*60)*(cos(15)/cos(30))/2}, {sin(5*60)*(cos(15)/cos(30))/2}) {};
\node[type0, fill=red, scale=1.3,  label={[label distance=-0.02cm]60:{$v_{0,R}$}}] (v35) at ({cos(5*60)*(cos(15)/cos(30))}, {sin(300)*(cos(15)/cos(30))}) {};

\draw[t2,ultra thick, dotted] (v35) -- (e14) -- (v14) --(e24) ;
\draw[t2,ultra thick, dashed] (e24) -- (v24) -- (e34) -- (v34) ;
\draw[t2] (v14) -- (e44)--(v);
\draw[t1] (v14) -- (f4) -- (v24);
\draw[t1,ultra thick, dashed] (v) -- (f4) -- (v34);
\draw[t0] (e24) -- (f4) -- (e34);
\draw[t0] (f4) -- (e44);
\draw[t0] (e14) -- (f5) -- (e44);
\draw[t1] (v14) -- (f5);

\draw[t1,ultra thick, dotted] (v) -- (f5) -- (v35);

\node[draw=white] (k) at (0.5,-1.1) {};

\end{tikzpicture}
	\caption{\label{fig:lopsp_gyro}On the left, the lopsp-operation gyro is shown. The dashed edges are the edges of the path $P$. On the right the corresponding double chamber patch $O_P$ is drawn. The two paths resulting from $P$ are once shown with dashed edges and once with dotted edges.}
\end{figure}
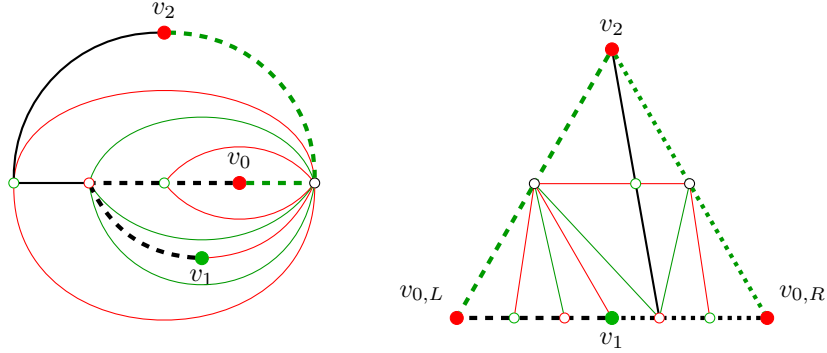

{\bf How to apply a lopsp-operation:}

For vertices $v,v'$ in a path $P$ we write $P_{v,v'}$ for the subpath of $P$ from $v$ to $v'$.

As lopsp-operations are 2-connected, due to Menger's theorem there are two paths -- one from $v_0$
to $v_1$ and one from $v_0$ to $v_2$ that have only $v_0$ in common.
These paths together form a longer path $P$ from $v_1$ to $v_2$ through $v_0$ and as a path $P$ has a single face in which only $v_1$ and $v_2$ occur once and all other vertices twice. We say that such a path $P$ is a \emph{cut-path} of $O$.
As $P$ is a subgraph of $O$, we can consider the internal component of its only face, which we call the {\em double chamber patch} $O_P$ for $O$ and $P$. It
 can be drawn in the plane, so that the two copies of the path form the boundary of the outer face.
Figure~\ref{fig:lopsp_gyro} shows
this for the operation gyro. The result of the cutting is a 4-gon with corner vertices $v_1$, $v_2$, and two copies of $v_0$, which we will denote as $v_{0,L}$ and $v_{0,R}$.

The lopsp-operation is now applied by first replacing the edges of a double chamber system $D_G$ of an embedded graph to form the graph $D_{G,P}$. An edge of type 2 is replaced by a copy of $P_{v_0,v_1}$ and an edge of type 1 is replaced by a copy of $P_{v_0,v_2}$ in a way that for $i\in \{0,1,2\}$ a copy of $v_i$ is identified with a vertex of type $i$.

Gluing copies of the double chamber patch $O_P$ into the faces of $D_{G,P}$ -- identifying corresponding vertices in $D_{G,P}$
and the copies of double chamber patches --  gives the graph $O_P(G)$ that is the result of the operation.
Note that the orientation inside a double chamber fixes how the different copies of $v_0$ have to be
identified. Figure~\ref{fig:gyro_applied} gives an example -- restricted to one face -- of this application. 

Of course the two paths are far from unique, so there are many ways how a lopsp-operation can be applied. One of the main results in this article is that although the ways in which
the operation is applied differ, the result of applying a lopsp-operation to an embedded graph is independent of the paths chosen. This will be proven in
Chapter~\ref{sec:path-invariance}.

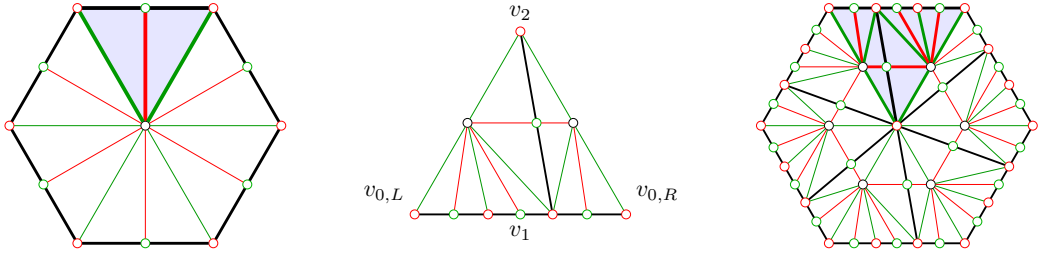
\begin{figure}
	\centering
	\scalebox{0.9}{\begin{tikzpicture}[scale=1.8]
\tikzset{type0/.style={shape=circle, draw=red, scale=0.4, fill=white}}
\tikzset{type1/.style={shape=circle, draw=black!30!green, scale=0.4, fill=white}}
\tikzset{type2/.style={shape=circle, draw=black, scale=0.4, fill=white}}
\tikzset{0edge/.style={draw=red, thin}}
\tikzset{1edge/.style={draw=black!40!green, thin}}
\tikzset{2edge/.style={very thick}}

\fill[blue!10!white] (0,0) -- ({-cos(15)*tan(30)}, {cos(15)})-- (0, {cos(15)}) -- ({cos(15)*tan(30)}, {cos(15)}) -- cycle;

\node[type2] (v2) at (0, 0) {};

\node[type1] (v1) at (0, {cos(15)}) {};
\node[type1] (v11) at ({cos(15) *cos(30)}, {cos(15)*sin(30)}) {};
\node[type1] (v12) at ({cos(15) *cos(30)}, {-cos(15)*sin(30)}) {};
\node[type1] (v13) at (0, {-cos(15)}) {};
\node[type1] (v14) at ({-cos(15) *cos(30)}, {-cos(15)*sin(30)}) {};
\node[type1] (v15) at ({-cos(15) *cos(30)}, {cos(15)*sin(30)}) {};

\node[type0] (v0) at ({cos(15)*tan(30)}, {cos(15)}) {};
\node[type0] (v01) at ({cos(15)/sin(60)}, 0) {};
\node[type0] (v02) at ({cos(15)*tan(30)}, {-cos(15)}) {};
\node[type0] (v03) at ({-cos(15)*tan(30)}, {-cos(15)}) {};
\node[type0] (v04) at ({-cos(15)/sin(60)}, 0) {};
\node[type0] (v05) at ({-cos(15)*tan(30)}, {cos(15)}) {};

\begin{scope}[0edge]
\draw[ultra thick] (v1) -- (v2) ;
\draw (v2) -- (v11);
\draw (v12) -- (v2) -- (v13);
\draw (v14) -- (v2) -- (v15);
\end{scope}

\begin{scope}[2edge]
\draw[ultra thick] (v05)--(v1) -- (v0);
\draw (v0) --(v11)--(v01) -- (v12) -- (v02) -- (v13) 
-- (v03) -- (v14) -- (v04) -- (v15) -- (v05);
\end{scope}

\begin{scope}[1edge]
\draw[ultra thick] (v0) -- (v2) --(v05);
\draw (v2) -- (v01);
\draw (v02) -- (v2) -- (v03);
\draw (v04) -- (v2);
\end{scope}

\end{tikzpicture}} \qquad
        \scalebox{0.9}{\begin{tikzpicture}[scale=2.8]
\tikzset{type0/.style={shape=circle, draw=red, scale=0.4, fill=white}}
\tikzset{type1/.style={shape=circle, draw=black!30!green, scale=0.4, fill=white}}
\tikzset{type2/.style={shape=circle, draw=black, scale=0.4, fill=white}}
\tikzset{t0/.style={draw=red, thin}}
\tikzset{t1/.style={draw=black!40!green, thin}}
\tikzset{t2/.style={thick}}

\node[type0,   label={[label distance=-0.02cm]90:{$v_2$}}] (v) at (0, 0) {};


\node[type1] (e14) at ({cos(50+4*60)*cos(15)/cos(20)}, {sin(50+4*60)*cos(15)/cos(20)}) {};
\node[type0] (v14) at ({cos(40+4*60)*cos(15)/cos(10)}, {sin(40+4*60)*cos(15)/cos(10)}) {};
\node[type1, label={[label distance=-0.02cm]-90:{$v_1$}}] (e24) at ({cos(30+4*60)*cos(15)}, {sin(30+4*60)*cos(15)}) {};
\node[type0] (v24) at ({cos(20+4*60)*cos(15)/cos(10)}, {sin(20+4*60)*cos(15)/cos(10)}) {};
\node[type1] (e34) at ({cos(10+4*60)*cos(15)/cos(20)}, {sin(10+4*60)*cos(15)/cos(20)}) {};
\node[type0,  label={[label distance=-0.02cm]120:{$v_{0,L}$}}] (v34) at ({cos(4*60)*(cos(15)/cos(30))}, {sin(4*60)*(cos(15)/cos(30))}) {};
\node[type1] (e44) at ({cos(40+4*60)*cos(15)/cos(10)/2}, {sin(40+4*60)*cos(15)/cos(10)/2}) {};
\node[type2] (f4) at ({cos(4*60)*(cos(15)/cos(30))/2}, {sin(4*60)*(cos(15)/cos(30))/2}) {};

\node[type2] (f5) at ({cos(5*60)*(cos(15)/cos(30))/2}, {sin(5*60)*(cos(15)/cos(30))/2}) {};
\node[type0,  label={[label distance=-0.02cm]60:{$v_{0,R}$}}] (v35) at ({cos(5*60)*(cos(15)/cos(30))}, {sin(300)*(cos(15)/cos(30))}) {};

\draw[t2] (v35) -- (e14) -- (v14) --(e24) ;
\draw[t2] (e24) -- (v24) -- (e34) -- (v34) ;
\draw[t2] (v14) -- (e44)--(v);
\draw[t1] (v14) -- (f4) -- (v24);
\draw[t1] (v) -- (f4) -- (v34);
\draw[t0] (e24) -- (f4) -- (e34);
\draw[t0] (f4) -- (e44);
\draw[t0] (e14) -- (f5) -- (e44);
\draw[t1] (v14) -- (f5);

\draw[t1] (v) -- (f5) -- (v35);

\node[draw=white] (k) at (0.5,-1.1) {};

\end{tikzpicture}} \qquad
	\scalebox{0.9}{\begin{tikzpicture}[scale=1.8]
\tikzset{type0/.style={shape=circle, draw=red, scale=0.4, fill=white}}
\tikzset{type1/.style={shape=circle, draw=black!30!green, scale=0.4, fill=white}}
\tikzset{type2/.style={shape=circle, draw=black, scale=0.4, fill=white}}
\tikzset{t0/.style={draw=red, thin}}
\tikzset{t1/.style={draw=black!40!green, thin}}
\tikzset{t2/.style={thick}}

\fill[blue!10!white] (0,0) -- ({-cos(15)*tan(30)}, {cos(15)}) -- (0, {cos(15)}) -- ({cos(15)*tan(30)}, {cos(15)}) -- cycle;

\node[type0] (v) at (0, 0) {};

\foreach \i in {0,...,5}
{
\node[type1] (e1\i) at ({cos(50+\i*60)*cos(15)/cos(20)}, {sin(50+\i*60)*cos(15)/cos(20)}) {};
\node[type0] (v1\i) at ({cos(40+\i*60)*cos(15)/cos(10)}, {sin(40+\i*60)*cos(15)/cos(10)}) {};
\node[type1] (e2\i) at ({cos(30+\i*60)*cos(15)}, {sin(30+\i*60)*cos(15)}) {};
\node[type0] (v2\i) at ({cos(20+\i*60)*cos(15)/cos(10)}, {sin(20+\i*60)*cos(15)/cos(10)}) {};
\node[type1] (e3\i) at ({cos(10+\i*60)*cos(15)/cos(20)}, {sin(10+\i*60)*cos(15)/cos(20)}) {};
\node[type0] (v3\i) at ({cos(\i*60)*(cos(15)/cos(30))}, {sin(\i*60)*(cos(15)/cos(30))}) {};
\node[type1] (e4\i) at ({cos(40+\i*60)*cos(15)/cos(10)/2}, {sin(40+\i*60)*cos(15)/cos(10)/2}) {};
\node[type2] (f\i) at ({cos(\i*60)*(cos(15)/cos(30))/2}, {sin(\i*60)*(cos(15)/cos(30))/2}) {};
}

\foreach \j / \i in {1/0,4/3,5/4,0/5}
{
\draw[t2] (v3\j) -- (e1\i) -- (v1\i) --(e2\i) -- (v2\i) -- (e3\i) -- (v3\i) ;
\draw[t2] (v1\i) -- (e4\i)--(v);
\draw[t1] (v1\i) -- (f\i) -- (v2\i);
\draw[t1] (v) -- (f\i) -- (v3\i);
\draw[t0] (e2\i) -- (f\i) -- (e3\i);
\draw[t0] (f\i) -- (e4\i);
\draw[t0] (e1\i) -- (f\j) -- (e4\i);
\draw[t1] (v1\i) -- (f\j);
}

\foreach \j / \i in {2/1}
{
\begin{scope}[very thick]
\draw[t2,very thick] (v3\j) -- (e1\i) -- (v1\i) --(e2\i) -- (v2\i) -- (e3\i) -- (v3\i) ;
\draw[t2,very thick] (v1\i) -- (e4\i)--(v);
\draw[t1,very thick] (v1\i) -- (f\i) -- (v2\i);
\draw[t1,very thick] (v) -- (f\i) -- (v3\i);
\draw[t0,very thick] (e2\i) -- (f\i) -- (e3\i);
\draw[t0,very thick] (f\i) -- (e4\i);
\draw[t0,very thick] (e1\i) -- (f\j) -- (e4\i);
\draw[t1,very thick] (v1\i) -- (f\j);
\end{scope}
}

\foreach \j / \i in {3/2}
{
\draw[t2] (v3\j) -- (e1\i) -- (v1\i) --(e2\i) -- (v2\i) -- (e3\i) -- (v3\i) ;
\draw[t2] (v1\i) -- (e4\i)--(v);
\draw[t1] (v1\i) -- (f\i) -- (v2\i);
\draw[t1, very thick] (v) -- (f2) -- (v32);
\draw[t0] (e2\i) -- (f\i) -- (e3\i);
\draw[t0] (f\i) -- (e4\i);
\draw[t0] (e1\i) -- (f\j) -- (e4\i);
\draw[t1] (v1\i) -- (f\j);
}

\end{tikzpicture}}
  
	\caption{\label{fig:gyro_applied}On the left the barycentric subdivision of a hexagonal face is shown. On the right the lopsp-operation gyro -- depicted in the middle -- is applied to it. The blue shaded area shows one double chamber.}
\end{figure}

\subsection*{The relation between Delaney-Dress symbols and operations}

\begin{figure}
	\centering
	\begin{tikzpicture}[scale = 3.7]
\tikzset{type0/.style={shape=circle, draw=red, scale=0.4, fill=white}}
\tikzset{type1/.style={shape=circle, draw=black!30!green, scale=0.4, fill=white}}
\tikzset{type2/.style={shape=circle, draw=black, scale=0.4, fill=white}}
\tikzset{noNode/.style={draw=none}}
\tikzset{0edge/.style={draw=red, thin}}
\tikzset{1edge/.style={draw=black!40!green, thin}}
\tikzset{2edge/.style={very thick}}

\node[type2] (v2) at (0, 1) {};
\node[type1] (v1) at (0, 0) {};
\node[type2] (v0) at ({tan(30)}, 0) {};
\node[type1] (e) at ({sin(30)},{1 - cos(30)}) {};
\node[type0] (v) at ({tan(15)}, {0})  {};

\begin{scope}[0edge]
\draw (v1) edge (v2);
\draw (e) edge (v2);
\draw (v0) edge (e);
\end{scope}

\begin{scope}[1edge]
\draw (v) edge (v0);
\draw (v) edge (v2);
\end{scope}

\begin{scope}[2edge]
\draw (v1) edge (v);
\draw (e) edge (v);
\end{scope}

\def\dist{0.04}
\def\halfwidth{0.02}
\def\angle{30}
\def\loose{6}
\newcommand{\drawroundarrow}[6]{
	\draw[#1] ({#3 - \halfwidth*cos(#5) + cos(#5 + #6*90)*\dist},{#4 - \halfwidth*sin(#5) + sin(#5 + #6*90)*\dist}) 
	to[in={#5 + #6* 90 - #6*\angle} , out=#5 +#6*90 + #6*\angle, looseness=\loose] node[anchor=#5 - #6*90] {$\sigma_#2$} 
	++({2*\halfwidth*cos(#5)},{2*\halfwidth*sin(#5)});
}
\begin{scope}[->]
\drawroundarrow{white}{1}{3*tan(30)/4}{0}{0}{-1}
\end{scope}

\node[label=150:{$v_2$}] at (0, 1) {};
\node[label=190:$v_1$] at (0, 0) {};
\node[label={-10:$v_0$}] at ({tan(30)}, 0) {};
\end{tikzpicture} \qquad 
\begin{tikzpicture}[scale = 3.7]

\tikzset{type/.style={shape=circle, draw=black, scale=0.4, fill=white}}
\tikzset{noNode/.style={draw=none}}
\tikzset{0edge/.style={draw=red, thin}}
\tikzset{1edge/.style={draw=black!40!green, thin}}
\tikzset{2edge/.style={very thick}}

\node[type] (v2) at (0, 1) {};
\node[type] (v1) at (0, 0) {};
\node[type] (v0) at ({tan(30)}, 0) {};
\node[type] (e) at ({sin(30)},{1 - cos(30)}) {};
\node[type] (v) at ({tan(15)}, {0})  {};

\begin{scope}
\draw (v1) edge (v2);
\draw (e) edge (v2);
\draw (v0) edge (e);
\end{scope}

\begin{scope}
\draw (v) edge (v0);
\draw (v) edge (v2);
\end{scope}

\begin{scope}
\draw (v1) edge (v);
\draw (e) edge (v);
\end{scope}

\node at (0.1, 0.2) {\small$C_1$};
\node at (0.32, 0.25) {\small$C_2$};
\node at (0.48, 0.05) {\small$C_3$};

\node[label=150:{\color{white}{$v_2$}}] at (0, 1) {};

\def\dist{0.04}
\def\halfwidth{0.02}
\def\angle{30}
\def\loose{6}
\newcommand{\drawroundarrow}[6]{
	\draw[#1] ({#3 - \halfwidth*cos(#5) + cos(#5 + #6*90)*\dist},{#4 - \halfwidth*sin(#5) + sin(#5 + #6*90)*\dist}) 
	to[in={#5 + #6* 90 - #6*\angle} , out=#5 +#6*90 + #6*\angle, looseness=\loose] node[anchor=#5 - #6*90] {$\sigma_#2$} 
	++({2*\halfwidth*cos(#5)},{2*\halfwidth*sin(#5)});
}
\begin{scope}[->]
\drawroundarrow{red}{0}{0}{1/2}{90}{1}
\drawroundarrow{red}{0}{sin(30)/2}{1-cos(30)/2}{120}{-1}
\drawroundarrow{red}{0}{(tan(30) +sin(30))/2}{(1-cos(30))/2}{120}{-1}
\drawroundarrow{black}{2}{tan(30)/4}{0}{0}{-1}
\drawroundarrow{black!40!green}{1}{3*tan(30)/4}{0}{0}{-1}
\end{scope}

\def\halflength{0.04}
\newcommand{\drawarrow}[4]{
    \draw[#1] ({#2 - \halflength*cos(#4)},{ #3- \halflength*sin(#4)}) -- ++({2*\halflength*cos(#4)},{2*\halflength*sin(#4)});
}
\begin{scope}[<->]
\drawarrow{black!40!green}{tan(15)/2}{1/2}{15}
\drawarrow{black}{(sin(30)+tan(15))/2}{(1-cos(30))/2}{-60}
\end{scope}

\matrix (mat) [matrix of math nodes] at (0.9,0.8)
  {
       & m_{01}   & m_{12} \\ \hline
    C_1  & 12   & 3 \\
    C_2 & 12  & 3 \\
    C_3 & 3 & 3\\
  } ;
  
 \draw (mat-1-2.north west) -- (mat-4-1.south east);

\end{tikzpicture}
	\caption{\label{fig:truncation_DD}The operation {\em truncation} and the Delaney-Dress symbol encoding a tiling from which the operation can be obtained when the original definition is applied.}
\end{figure}

Now we will show the correspondence between Delaney-Dress symbols, lsp-, and lopsp-operations. Due to the relation between Delaney-Dress symbols and tilings as described in
\cite{olaf_ddsymbols} and \cite{DressHuson87}, this also shows the equivalence of the combinatorial definitions of lsp- and lopsp-operations defined here and the geometric ones
given in \cite{gocox}. For $(i,j)\in \{(0,1),(0,2),(1,2)\}$ let $v^{ij}(C)$ be the vertex of chamber $C$ that is not of type $i$ or $j$. 

	Let $O$ be an lsp-operation and $\mathcal{D}_O$ be the set of inner faces (chambers) of $O$. We define the action of $\Sigma$ on $\mathcal{D}_O$ by letting
        $C\sigma_i=C'$ if $C$ and $C'$ share their $i$-edge, and $C\sigma_i=C$ if $C$ shares its $i$-edge with the outer face of $O$. Let $(i,j)\in \{(0,1),(0,2),(1,2)\}$. We define $m_{ij}:\mathcal{D}_O \rightarrow \mathbb{N}$
        with $C^{\langle \sigma_i,\sigma_j\rangle}$ the $\sigma_i\sigma_j$-orbit of $C$ as follows:
	\begin{equation*}
	  m_{ij}(C)=\begin{cases}
          			\left|C^{\langle \sigma_i,\sigma_j\rangle}\right| \cdot 2 \quad &\text{if }v^{ij}(C)= v_1\\
			\left|C^{\langle \sigma_i,\sigma_j\rangle}\right| \cdot 3 &\text{if }v^{ij}(C)= v_0\\
			\left|C^{\langle \sigma_i,\sigma_j\rangle}\right| \cdot 6 &\text{if }v^{ij}(C)= v_2\\
			\dfrac{\left|C^{\langle \sigma_i,\sigma_j\rangle}\right|}{2} &\text{if $v^{ij}(C)$ is an inner vertex}\\
			{\left|C^{\langle \sigma_i,\sigma_j\rangle}\right|} \quad &\parbox{6cm}{if $v^{ij}(C)$ is an outer vertex different from $v_0,v_1$, and $v_2$}\\
		\end{cases}
	\end{equation*}

        Note that the requirements for the vertex degrees in an lsp-operation imply that $m_{02}(C)=2$ for all $C\in\mathcal{D}_O$.
        
        Then we define $\mathcal{D}(O)=(\mathcal{D}_O, m_{01}, m_{02}, m_{12})$ and call it the Delaney-Dress symbol corresponding to the lsp-operation
        $O$. This correspondence is illustrated for the operation
        ``truncation'' in Figure~\ref{fig:truncation_DD}. The following theorem states that it is in fact a Delaney-Dress symbol of a tiling of the Euclidean plane.

\begin{theorem}\label{thm:lsp_is_dd}
	If $O$ is an lsp-operation, then $\mathcal{D}(O)=(\mathcal{D}_O, m_{01}, m_{02}, m_{12})$ is the \emph{Delaney-Dress symbol} of a tiling of the Euclidean plane.
\end{theorem}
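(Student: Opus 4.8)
The plan is to verify, in turn, the four defining conditions of a Delaney--Dress symbol from Definition~\ref{def:dd_symbol} for $\mathcal{D}(O)=(\mathcal{D}_O,m_{01},m_{02},m_{12})$. Condition~(1) is immediate: $O$ is a finite graph, so $\mathcal{D}_O$ (its set of inner faces) is finite. For condition~(2), observe that $O$ with its outer face removed is a triangulated disc (a $2$-connected plane graph all of whose inner faces are triangles, bounded by the outer cycle), so the adjacency graph of its inner faces is connected; since two inner faces sharing an edge are $\sigma_i$-related for the appropriate $i$, $\Sigma$ acts transitively on $\mathcal{D}_O$. For the first half of condition~(3), the key observation is that $v^{ij}$ is constant on $\langle\sigma_i,\sigma_j\rangle$-orbits: if $\{i,j,k\}=\{0,1,2\}$ and $C$ has type-vertices $u_0,u_1,u_2$, then the type-$i$ edge of $C$ is $u_ju_k$, so whenever $C\sigma_i\neq C$ the chamber $C\sigma_i$ shares this edge and hence has $u_j,u_k$ as its type-$j$ and type-$k$ vertices; thus $v^{ij}(C\sigma_i)=u_k=v^{ij}(C)$, and likewise for $\sigma_j$ and trivially when $C\sigma_i=C$. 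Since $|C^{\langle\sigma_i,\sigma_j\rangle}|$ is obviously constant on orbits, $m_{ij}$ is too. The same observation identifies the $\langle\sigma_i,\sigma_j\rangle$-orbit of $C$ with the set of chambers incident to the vertex $w:=v^{ij}(C)$ (its ``fan''): at a type-$k$ vertex every incident edge has type $i$ or $j$, and consecutive edges around $w$ bound a common triangle whose two edges at $w$ have the two distinct types, so the types alternate and every step of the fan is a $\sigma_i$- or $\sigma_j$-move.

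For the second half of condition~(3), and for $m_{ij}(C)\in\mathbb{N}$, I distinguish two cases for $w=v^{ij}(C)$. If $w$ is interior, its fan is a cyclic sequence of all $d:=\deg(w)$ inner triangles around it; the alternation of edge types forces $d$ even, $\sigma_i\sigma_j$ acts on this cyclic fan as a rotation by two (order $d/2$), and since $|C^{\langle\sigma_i,\sigma_j\rangle}|=d$ the definition gives $m_{ij}(C)=d/2\in\mathbb{N}$ with $C(\sigma_i\sigma_j)^{d/2}=C$. If $w$ is on the outer cycle, its fan is a path of $c:=\deg(w)-1$ triangles whose two extreme edges lie on the outer boundary; a short check shows that $\sigma_i\sigma_j$ then runs through these $c$ chambers cyclically, i.e.\ has order $c$ on the fan. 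As $|C^{\langle\sigma_i,\sigma_j\rangle}|=c$ and every multiplier occurring in the definition ($1,2,3,6$) is an integer, $m_{ij}(C)$ is an integer multiple of $c$, so $C(\sigma_i\sigma_j)^{m_{ij}(C)}=C$. The degree restrictions of Definition~\ref{def:lsp} on type-$1$ vertices (degree $4$ in the interior, $3$ on the boundary, $2$ at $v_1$) are exactly what forces $m_{02}(C)=2$ for every $C$.

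The substantial point is condition~(4). For a pair $(i,j)$ with complementary type $k$, put $\Sigma_{ij}:=\sum_{C\in\mathcal{D}_O}1/m_{ij}(C)$. Grouping chambers by $\langle\sigma_i,\sigma_j\rangle$-orbit, and using that these orbits are precisely the fans of the type-$k$ vertices of $O$ (every vertex lies on at least one chamber), gives $\Sigma_{ij}=\sum_{t(w)=k}s(w)$ where $s(w):=|C^{\langle\sigma_i,\sigma_j\rangle}|/m_{ij}(C)$ for any $C$ in the fan of $w$. Reading $s(w)$ off the five cases of the definition of $m_{ij}$ shows it depends only on the \emph{position} of $w$: $s(w)=2$ if $w$ is interior, $s(w)=1$ if $w$ is a boundary vertex distinct from $v_0,v_1,v_2$, and $s(v_0)=\tfrac13$, $s(v_1)=\tfrac12$, $s(v_2)=\tfrac16$. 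Writing $V_{\mathrm{int}},V_{\mathrm{bd}}$ for the numbers of interior and boundary vertices of $O$, $V_1^{\mathrm{int}},V_1^{\mathrm{bd}}$ for those of type $1$, and $\delta=1$ if $t(v_1)=1$ and $\delta=0$ otherwise, we get
\begin{align*}
\mathcal{C}(\mathcal{D}(O))=\Sigma_{01}+\Sigma_{12}-\Sigma_{02}
&=\sum_{t(w)\in\{0,2\}}s(w)-\sum_{t(w)=1}s(w)\\
&=(2V_{\mathrm{int}}-4V_1^{\mathrm{int}})+(V_{\mathrm{bd}}-2V_1^{\mathrm{bd}})-(2-\delta),
\end{align*}
where the term $-(2-\delta)$ collects the corrections $\tfrac13-1$, $\tfrac16-1$ and $\pm(\tfrac12-1)$ contributed by $v_0,v_2,v_1$ (recalling $t(v_0),t(v_2)\neq1$). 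Two counting identities now finish the proof: Euler's formula for the disc $O$ together with $3|\mathcal{D}_O|=2|E_O|-|V_{\mathrm{bd}}|$ (triangularity, the outer boundary being a cycle) yields $|\mathcal{D}_O|=2V_{\mathrm{int}}+V_{\mathrm{bd}}-2$; and counting the incidences between chambers and their unique type-$1$ vertex, using the type-$1$ degrees above, yields $|\mathcal{D}_O|=4V_1^{\mathrm{int}}+2V_1^{\mathrm{bd}}-\delta$. Substituting the first identity and then the second into the displayed expression gives $\mathcal{C}(\mathcal{D}(O))=0$.

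Conditions~(1)--(3) are routine once the fan picture is in place; the main obstacle is condition~(4), namely isolating the position-only dependence of $s(w)$ and matching it against the two Euler-type counts so that all terms cancel. A handful of well-posedness points should be checked along the way (every vertex lies on a chamber; the fan of a boundary vertex has $\deg-1$ chambers; the outer boundary of a $2$-connected plane graph is a cycle; interior vertices have even degree), but all of these are standard.
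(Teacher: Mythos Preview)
Your proof is correct and follows essentially the same route as the paper: verify conditions (1)--(3) via the fan picture around $v^{ij}(C)$, and for condition (4) group $\sum_C 1/m_{ij}(C)$ by vertex to obtain the per-vertex weight $s(w)\in\{2,1,\tfrac12,\tfrac13,\tfrac16\}$ (the paper calls this $\alpha(v)$), then finish with Euler and triangularity. The only organizational difference is in handling the $-1/m_{02}$ term: the paper uses $m_{02}\equiv 2$ to rewrite $-1/m_{02}=+1/m_{02}-1$, obtaining the single identity $\mathcal{C}=\sum_v\alpha(v)-|\mathcal{D}_O|$ and needing only Euler to conclude, whereas you keep the minus sign on $\Sigma_{02}$, which forces the type-split and the extra incidence count $|\mathcal{D}_O|=4V_1^{\mathrm{int}}+2V_1^{\mathrm{bd}}-\delta$; both routes encode the same information and yield the same cancellation.
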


\begin{proof}
	We have to prove the properties in Definition \ref{def:dd_symbol}. The first two properties are obvious, so we will focus on the remaining ones.
	\begin{description}
		
	\item[3]: Let $(i,j)\in \{(0,1),(0,2),(1,2)\}$.  A $\langle \sigma_i,\sigma_j\rangle$-orbit consists of all the chambers sharing the same vertex $v^{ij}(C)$, so that by
          definition $m_{ij}$ is constant on $\langle \sigma_i,\sigma_j\rangle$-orbits.

          Elementary calculations show that if for a chamber $C(\sigma_i\sigma_j)^k=C$, then $C(\sigma_j\sigma_i)^k=C$ and that this holds for every chamber in $C^{\langle \sigma_i,\sigma_j\rangle}$.

	  Let $C$ be any chamber of $\mathcal{D}_O$. If $v^{ij}(C)$ is not in the outer face, we have $\left| C^{\langle\sigma_i,\sigma_j\rangle}\right| = deg(v^{ij}(C))$ and as $(\sigma_i\sigma_j)$ proceeds around
           $v^{ij}(C)$ in steps of two chambers, we have $C(\sigma_i\sigma_j)^{m_{ij}(C)}=C(\sigma_i\sigma_j)^{deg(v^{ij}(C))/2}=C$.

          If $v^{ij}(C)$ is in the outer face, it has two edges in the boundary. Let $C$ be a chamber in the boundary sharing $v^{ij}(C)$'s (w.l.o.g.) $i$-edge in the boundary. Then with $\sigma^{ij,m}$ a sequence of
          in total $m=\left|C^{\langle \sigma_i,\sigma_j\rangle}\right|$ alternating $\sigma_i$ and $\sigma_j$ we have that $C'=C\sigma^{ij,m}$ is the chamber of the orbit sharing the other edge with the boundary.
          Applying the same argument again to $C'$ (note that $C'$ can share an $i$-edge or a $j$-edge with the boundary), we get that $C(\sigma_i\sigma_j)^{\left|C^{\langle \sigma_i,\sigma_j\rangle}\right|}=C\sigma^{ij,2m}=C$
          and therefore $C(\sigma_i\sigma_j)^{k\cdot \left|C^{\langle \sigma_i,\sigma_j\rangle}\right|}=C$ which gives the result with $k\in \{1,2,3,6\}$ for the different cases for $v^{ij}(C)$.
	
	\item[4]: Let $\{i,j,k\}=\{0,1,2\}$.
          For a vertex $v$ of type $k$ and $i<j$ we define $\alpha(v)=	\sum\limits_{\substack{C\in \mathcal{D}_O\\ v\in C}}\left(\frac{1}{m_{ij}(C)}\right)$.

		Counting the number of chambers with a certain vertex and using the definition of $m_{ij}()$, we get that
		
		\begin{equation*}
			\alpha(v)=
			\begin{cases}
				2 &\text{if $v$ is an inner vertex}\\
				1 &\parbox{8cm}{if $v$ is an outer vertex different from $v_i$ for $i=0,1,2$}\\
				1/2 &\text{if }v= v_1\\
				1/3 &\text{if }v= v_0\\
				1/6 &\text{if }v= v_2
			\end{cases}.
		\end{equation*}
		
		Let $V_O, E_O, F_O$ be the sets of vertices, edges, and faces of $O$, so $|F_O|=|\mathcal{D}_O|+1$. Let $k$ be the number of vertices (and equivalently edges) in the outer face.

                As every vertex of $O$ is of exactly one type we get that:
		\begin{align*}
                  \mathcal{C}(\mathcal{D}_O, m_{01}, m_{02}, m_{12})&= 
		  \sum\limits_{D\in\mathcal{D}_O}\left(\frac{1}{m_{01}(C)} + \frac{1}{m_{12}(C)} -\frac{1}{m_{02}(C)}\right) \\
                  &=\sum\limits_{D\in\mathcal{D}_O}\left(\frac{1}{m_{01}(C)} + \frac{1}{m_{12}(C)} + \frac{1}{m_{02}(C)} \right) -\sum_{C \in\mathcal{D}_O}\left(1\right)\\
			&=\sum_{\substack{v\in V_O}}\alpha(v) - (|F_O|-1)\\
			&=(|V_O|-k)\cdot 2 +(k-3)\cdot 1 + \frac{1}{2} + \frac{1}{3} + \frac{1}{6} - (|F_O|-1)\\
			&=2|V_O| - |F_O| - k - 1
		\end{align*}
		
		By counting the number of directed edges associated with edges in the triangulation $O$ in two ways, we get that $2|E_O|=3(|F_O|-1) + k$ or equivalently $|F_O|=  2|E_O| -2|F_O| +3 - k$. We also know that $O$ is plane, so $|V_O| -|E_O|+|F_O|=2$. It follows that:
		\begin{align*}
			\mathcal{C}(\mathcal{D}_O, m_{01}, m_{02}, m_{12}) 
			&= 2|V_O| -2|E_O|+2|F_O| - 3 + k - k - 1=0
		\end{align*}
	\end{description}
	\vspace{-0.8cm}
\end{proof}

\begin{figure}
	\centering
	\scalebox{0.9}{\begin{tikzpicture}[scale=3.7]
\tikzset{type0/.style={shape=circle, draw=red, scale=0.4, fill=white}}
\tikzset{type1/.style={shape=circle, draw=black!30!green, scale=0.4, fill=white}}
\tikzset{type2/.style={shape=circle, draw=black, scale=0.4, fill=white}}
\tikzset{t0/.style={draw=red, thin}}
\tikzset{t1/.style={draw=black!40!green, thin}}
\tikzset{t2/.style={thick}}

\node[type0, fill=red, scale=1.3,  label={[label distance=-0.02cm]90:{$v_2$}}] (v) at (0, 0) {};


\node[type1] (e14) at ({cos(50+4*60)*cos(15)/cos(20)}, {sin(50+4*60)*cos(15)/cos(20)}) {};
\node[type0] (v14) at ({cos(40+4*60)*cos(15)/cos(10)}, {sin(40+4*60)*cos(15)/cos(10)}) {};
\node[type1, fill=green!70!black, scale=1.3, label={[label distance=-0.02cm]-90:{$v_1$}}] (e24) at ({cos(30+4*60)*cos(15)}, {sin(30+4*60)*cos(15)}) {};
\node[type0] (v24) at ({cos(20+4*60)*cos(15)/cos(10)}, {sin(20+4*60)*cos(15)/cos(10)}) {};
\node[type1] (e34) at ({cos(10+4*60)*cos(15)/cos(20)}, {sin(10+4*60)*cos(15)/cos(20)}) {};
\node[type0,  fill=red, scale=1.3,  label={[label distance=-0.02cm]120:{$v_{0,L}$}}] (v34) at ({cos(4*60)*(cos(15)/cos(30))}, {sin(4*60)*(cos(15)/cos(30))}) {};
\node[type1] (e44) at ({cos(40+4*60)*cos(15)/cos(10)/2}, {sin(40+4*60)*cos(15)/cos(10)/2}) {};
\node[type2] (f4) at ({cos(4*60)*(cos(15)/cos(30))/2}, {sin(4*60)*(cos(15)/cos(30))/2}) {};

\node[type2] (f5) at ({cos(5*60)*(cos(15)/cos(30))/2}, {sin(5*60)*(cos(15)/cos(30))/2}) {};
\node[type0, fill=red, scale=1.3,  label={[label distance=-0.02cm]60:{$v_{0,R}$}}] (v35) at ({cos(5*60)*(cos(15)/cos(30))}, {sin(300)*(cos(15)/cos(30))}) {};

\draw[t2,ultra thick, dotted] (v35) -- (e14) -- (v14) --(e24) ;
\draw[t2,ultra thick, dashed] (e24) -- (v24) -- (e34) -- (v34) ;
\draw[t2] (v14) -- (e44)--(v);
\draw[t1] (v14) -- (f4) -- (v24);
\draw[t1,ultra thick, dashed] (v) -- (f4) -- (v34);
\draw[t0] (e24) -- (f4) -- (e34);
\draw[t0] (f4) -- (e44);
\draw[t0] (e14) -- (f5) -- (e44);
\draw[t1] (v14) -- (f5);

\draw[t1,ultra thick, dotted] (v) -- (f5) -- (v35);

\node[draw=white] (k) at (0.5,-1.1) {};

\end{tikzpicture}} \qquad \scalebox{0.9}{

\begin{tikzpicture}[scale=2.5]
\tikzset{type0/.style={shape=circle, draw=red, scale=0.4, fill=white}}
\tikzset{type1/.style={shape=circle, draw=black!30!green, scale=0.4, fill=white}}
\tikzset{type2/.style={shape=circle, draw=black, scale=0.4, fill=white}}
\tikzset{type/.style={shape=circle, draw=black, scale=0.4, fill=white}}
\tikzset{t0/.style={draw=red, thin}}
\tikzset{t1/.style={draw=black!40!green, thin}}
\tikzset{t2/.style={thick}}
\tikzset{t/.style={black}}

\clip (-1.15,2.1) -- (2.2,2.1) -- (2.2,-0.35) -- (-1.15,-0.35);

\node[type] (v1) at (0,0) {};
\node[type] (v2) at (0,{cot(30)}) {};
\node[type] (v0) at (1,0) {};
\node[type] (v) at (1/3,0) {};
\node[type] (e) at (2/3,0) {};
\node[type] (v0') at (-1,0) {};
\node[type] (v') at (-1/3,0) {};
\node[type] (e') at (-2/3,0) {};

\node[type] (f) at (1/2,{cot(30)/2}) {};
\node[type] (f') at (-1/2,{cot(30)/2}) {};

\node[type] (E) at (1/6, {cot(30)/2}) {};

\begin{scope}[t]
\draw (f) -- (E) -- (f') -- (v1)
	     (f) -- (e)
	     (f') -- (e');

\draw (v0) -- (f) -- (v2) -- (f') -- (v0')
	     (f) -- (v)
	     (f') -- (v')
	     (f') -- (v);

\draw (v2) -- (E) -- (v) -- (e) -- (v0)
	     (v0') -- (e') -- (v') -- (v1) -- (v);
\end{scope}

\def\halflength{0.05}
\newcommand{\drawarrow}[4]{
    \draw[#1,<->] ({#2 - \halflength*cos(#4)},{ #3- \halflength*sin(#4)}) -- ++({2*\halflength*cos(#4)},{2*\halflength*sin(#4)});
}

\drawarrow{black}{1/12}{3*cot(30)/4}{11}
\drawarrow{black}{3/12}{cot(30)/4}{11}
\drawarrow{red}{7/12}{cot(30)/4}{11}
\drawarrow{black!40!green}{-5/12}{cot(30)/4}{11}
\drawarrow{black!40!green}{5/12}{cot(30)/4}{169}
\drawarrow{red}{-7/12}{cot(30)/4}{169}
\drawarrow{black!40!green}{-1/12}{cot(30)/4}{44}
\drawarrow{red}{-3/12}{cot(30)/4}{30}
\drawarrow{red}{-1/6}{cot(30)/2}{90}
\drawarrow{red}{1/3}{cot(30)/2}{90}

\newcommand{\drawouterarrow}[5]{
    \draw[#1,<->] ({#2},{#3}) to[in=#4, out=180-#4, looseness=#5] ({-#2},{#3});
}

\drawouterarrow{black!40!green}{1/4}{3*cot(30)/4}{130}{5}
\drawouterarrow{black!40!green}{3/4}{cot(30)/4}{130}{4.5}
\drawouterarrow{black}{1/6}{0}{-90}{1}
\drawouterarrow{black}{3/6}{0}{-90}{0.7}
\drawouterarrow{black}{5/6}{0}{-90}{0.7}

\node at (-0.09, 1.07) {$C_1$};
\node at (0.25, 1.07) {$C_2$};
\node at (-0, 0.65) {$C_3$};
\node at (0.34, 0.65) {$C_4$};
\node at (-0.76, 0.15) {$C_5$};
\node at (-0.5, 0.15) {$C_6$};
\node at (-0.22, 0.15) {$C_7$};
\node at (0.05, 0.15) {$C_8$};
\node at (0.5, 0.15) {$C_9$};
\node at (0.78, 0.15) {$C_{10}$};

\matrix (mat) [matrix of math nodes] at (1.7,0.9)
  {
       & m_{01}   & m_{12} \\ \hline
    C_1& 5 & 6  \\
    C_2& 5 & 6  \\
    C_3& 5 & 3  \\
    C_4& 5 & 3  \\
    C_5& 5 & 3  \\
    C_6& 5 & 3  \\
    C_7& 5 & 3  \\
    C_8& 5 & 3  \\
    C_9& 5 & 3  \\
    C_{10}& 5 & 3  \\
  } ;
  
 \draw (mat-1-2.north west) -- (mat-11-1.south east);

\end{tikzpicture}}
	\caption{\label{fig:lopsp_gyro_DD}On the left the double chamber patch of the lopsp-operation gyro is shown and on the right the corresponding Delaney-Dress symbol.}
\end{figure}

	Let $O$ be a lopsp-operation and let $\mathcal{D}_O$ be the set of chambers of $O$. We define the action of $\Sigma$ on $\mathcal{D}_O$ by letting $C\sigma_i=C'$ if $C$ and $C'$ share their $i$-edge. We define $m_{01}, m_{02}, m_{12}:\mathcal{D}_O \rightarrow \mathbb{N}$ as follows:
	\begin{equation*}
		m_{ij}(C)=\begin{cases}
			\dfrac{\left|C^{\langle \sigma_i,\sigma_j\rangle}\right|}{2} \cdot 2 \quad &\text{if }v^{ij}(C)= v_1\\
			\dfrac{\left|C^{\langle \sigma_i,\sigma_j\rangle}\right|}{2} \cdot 3 &\text{if }v^{ij}(C)= v_0\\
			\dfrac{\left|C^{\langle \sigma_i,\sigma_j\rangle}\right|}{2} \cdot 6 &\text{if }v^{ij}(C)= v_2\\
			\dfrac{\left|C^{\langle \sigma_i,\sigma_j\rangle}\right|}{2} &\text{else}
		\end{cases}
	\end{equation*}

        Note that $m_{02}(C)=2$ for all $C\in \mathcal{D}_O$.
	
	Then we define $\mathcal{D}(O)=(\mathcal{D}_O, m_{01}, m_{02}, m_{12})$ and call it the Delaney-Dress symbol corresponding to the lopsp-operation $O$.   This
correspondence is illustrated for the operation ``gyro'' in Figure~\ref{fig:lopsp_gyro_DD}.

\begin{theorem}\label{thm:lopsp_is_dd}
  
If $O$ is a lopsp-operation, then $\mathcal{D}(O)=(\mathcal{D}_O, m_{01}, m_{02}, m_{12})$ is the \emph{Delaney-Dress symbol} of a tiling of the Euclidean plane. We call this tiling the {\em associated tiling} $T_O$ of $O$.
        
\end{theorem}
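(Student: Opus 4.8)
The plan is to verify the four defining properties of a Delaney-Dress symbol of the Euclidean plane from Definition~\ref{def:dd_symbol}, mirroring the proof of Theorem~\ref{thm:lsp_is_dd} but with the simplification that for a lopsp-operation every face of $O$ (including the unbounded one) is a chamber and no boundary plays a special role. Properties~1 and~2 are immediate: $\mathcal{D}_O$ is finite because $O$ is a finite graph, and $\Sigma$ acts transitively because the orbits of $\Sigma$ on $\mathcal{D}_O$ are exactly the connected components of the chamber-adjacency graph of $O$ (two chambers adjacent when they share an edge), which is the dual graph of the triangulation $O$ and hence connected, $O$ being connected.

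For Property~3 I would first set up the local picture around a vertex. Let $\{i,j,k\}=\{0,1,2\}$ and let $v$ be a vertex of type $k$. Since $O$ is a triangulation with no edges between vertices of the same type, the edges at $v$ carry only types $i$ and $j$, and two consecutive edges in the rotation at $v$ bound a common chamber whose two edges at $v$ are precisely its $i$-edge and its $j$-edge; hence the edge types at $v$ strictly alternate, $\deg(v)$ is even, and a chamber $C$ contains $v$ if and only if $v=v^{ij}(C)$. Consequently the $\langle\sigma_i,\sigma_j\rangle$-orbit of a chamber $C$ is exactly the cyclic sequence of the $\deg(v^{ij}(C))$ chambers around $v^{ij}(C)$; since this vertex and the orbit size are the same for every member of the orbit, $m_{ij}$ is constant on $\langle\sigma_i,\sigma_j\rangle$-orbits. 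Moreover $\sigma_i\sigma_j$ moves a chamber by two steps along this cyclic sequence, so it has order $\deg(v^{ij}(C))/2$ on the orbit; as $m_{ij}(C)=\tfrac12\bigl|C^{\langle\sigma_i,\sigma_j\rangle}\bigr|\cdot c$ with $c\in\{1,2,3,6\}$ is a multiple of $\deg(v^{ij}(C))/2$, the permutation $(\sigma_i\sigma_j)^{m_{ij}(C)}$ is the identity on the whole orbit, giving $C(\sigma_i\sigma_j)^{m_{ij}(C)}=C$ for every chamber in it. The case $t(v_1)=1$, where $\deg(v_1)=2$, the ``cycle'' has length two and $\sigma_i\sigma_j$ acts trivially, is checked directly and is consistent with $m_{02}\equiv 2$.

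For Property~4 I would, as in the proof of Theorem~\ref{thm:lsp_is_dd}, put $\alpha(v)=\sum_{C\in\mathcal{D}_O,\,v\in C}1/m_{ij}(C)$ for $v$ of type $k$ with $i<j$. The orbit analysis shows that each of the $\deg(v)$ chambers at $v$ has $m_{ij}=\tfrac12\deg(v)\cdot c$, so $\alpha(v)=2/c$, i.e.\ $\alpha(v)=2$ for $v\notin\{v_0,v_1,v_2\}$ and $\alpha(v_1)=1$, $\alpha(v_0)=\tfrac23$, $\alpha(v_2)=\tfrac13$. Since $m_{02}\equiv 2$ we may write $-1/m_{02}=1/m_{02}-1$, and regrouping the three terms contributed by each chamber according to the chamber's three vertices yields
\begin{align*}
	\mathcal{C}(\mathcal{D}_O, m_{01}, m_{02}, m_{12}) &=\sum_{C\in\mathcal{D}_O}\left(\frac{1}{m_{01}(C)}+\frac{1}{m_{12}(C)}+\frac{1}{m_{02}(C)}\right)-|\mathcal{D}_O|\\
	&=\sum_{v\in V_O}\alpha(v)-|\mathcal{D}_O|.
\end{align*}
Then $\sum_{v\in V_O}\alpha(v)=2(|V_O|-3)+1+\tfrac23+\tfrac13=2|V_O|-4$, while $|\mathcal{D}_O|=|F_O|$, and combining Euler's formula $|V_O|-|E_O|+|F_O|=2$ with the triangulation count $2|E_O|=3|F_O|$ gives $|F_O|=2|V_O|-4$; hence the curvature is $0$.

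I do not expect a genuine obstacle: once the ``cyclic sequence of chambers around a vertex'' picture is in place, everything is bookkeeping. The only points deserving a little care are the degree-two vertex arising from $t(v_1)=1$ and a clean statement that $\sigma_i\sigma_j$ advances a chamber by a fixed number of steps around $v^{ij}(C)$ independently of the starting chamber; neither is difficult. Finally, the identification of $T_O$ as the ``associated tiling'' is simply the translation of the Delaney-Dress symbol $\mathcal{D}(O)$ back into a tiling via the correspondence recalled from \cite{olaf_ddsymbols} and \cite{DressHuson87}.
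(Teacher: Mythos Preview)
Your proposal is correct and follows essentially the same approach as the paper's proof: both verify the four Delaney-Dress conditions directly, use the identification of $\langle\sigma_i,\sigma_j\rangle$-orbits with the chambers around a vertex to handle Property~3, and compute the curvature via the same $\alpha(v)$ contributions together with Euler's formula and $2|E_O|=3|F_O|$. You supply slightly more detail (transitivity of $\Sigma$, the alternating edge types, the $\deg(v_1)=2$ case), but the argument is the same.
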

\begin{proof}
	We prove the properties in Definition \ref{def:dd_symbol}. Again, the first two are obvious.
	\begin{description}
	\item[3]: With exactly the same reasoning as for internal vertices in case 3 of the proof of Theorem~\ref{thm:lsp_is_dd} we get that 
          for $(i,j)\in \{(0,1),(0,2),(1,2)\}$ we have that $C(\sigma_i\sigma_j)^{\left|C^{\langle \sigma_i,\sigma_j\rangle}\right| /2}=C(\sigma_i\sigma_j)^{deg(v^{ij}(C))/2}=C$ and as
          all $m_{ij}(C)$ are a multiple of $\left|C^{\langle \sigma_i,\sigma_j\rangle}\right|/2$, we have that $C(\sigma_i\sigma_j)^{m_{ij}(C)}=C$ for all $C$ and all $(i,j)\in \left\{(0,1),(0,2),(1,2)\right\}$.

	\item[4]:    Let $\{i,j,k\}=\{0,1,2\}$ and $V_O,E_O,F_O$ be the sets of vertices, edges, and faces of the operation.
                                 For a vertex $v\in V_O$ of type $k$ and $i<j$ we again define $\alpha(v)=	\sum\limits_{\substack{C\in \mathcal{D}_O\\ v\in C}}\left(\frac{1}{m_{ij}(C)}\right)$.

                                 		Counting the number of chambers with a certain vertex and using the definition of $m_{ij}()$, we get that
		
		\begin{equation*}
			\alpha(v)=
			\begin{cases}
				2 &\text{if } v \notin \{v_0,v_1,v_2\}\\
				1 &\text{if }v= v_1\\
				2/3 &\text{if }v= v_0\\
				1/3 &\text{if }v= v_2
			\end{cases}.
		\end{equation*}

                We can now compute the curvature $\mathcal{C}(\mathcal{D}_O, m_{01}, m_{02}, m_{12})$:

                        \begin{align*}         \mathcal{C}(\mathcal{D}_O, m_{01}, m_{02}, m_{12})&= 
			\sum\limits_{C\in\mathcal{D}_O} \left(\frac{1}{m_{01}(C)} + \frac{1}{m_{12}(C)} -\frac{1}{m_{02}}\right) \\
			&=\sum\limits_{C\in\mathcal{D}_O}\left(\frac{1}{m_{01}(C)} + \frac{1}{m_{12}(C)} +\frac{1}{m_{02}} - 1\right)
                        	\end{align*}

                  Reordering the sum so that the contributions of the $m_{ij}()$ are grouped around the common vertices we get

                        \begin{align*}    
                   \mathcal{C}(\mathcal{D}_O, m_{01}, m_{02}, m_{12})&=  \sum_{v\in V_O}\alpha(v)-|\mathcal{D}_O| \\
			&= (|V_O|-3)\cdot 2 + \frac{2}{3} + 1 + \frac{1}{3} - |F_O|\\
			&= 2|V_O| -|F_O| - 4	
		\end{align*}
		
		As $O$ is a triangulation, we get that $2|E_O|=3|F_O|$ and as $O$ is plane, we have $|V_O| -|E_O|+|F_O|=2$. It follows that:

                \[\mathcal{C}(\mathcal{D}_O, m_{01}, m_{02}, m_{12})
		= 2|V_O| -2|E_O|+2|F_O| - 4 = 0\]

	\end{description}
	\vspace{-0.8cm}
\end{proof}

As lsp-operations preserve all symmetries of an embedded graph, they also preserve the orientation-preserving symmetries, so one would expect that for every lsp-operation, there is
a lopsp-operation that has the same result when applied to any embedded graph. This observation allows to prove some properties of the result of applying lsp- or lopsp-operations
only for lopsp-operations, as the result for lsp-operations can then be deduced from the corresponding lopsp-operation.  Such an equivalent lopsp-operation can be obtained in the
following way:

Let $O$ be an lsp-operation, and let $c$ be the boundary of the outer face of $O$. Let $O_{lopsp}$ be the embedded graph obtained by glueing a mirrored copy of the inner face of
        $c$ into the outer face (identifying the vertices on $c$ with their copies). The special vertices $v_0,v_1,v_2$ of $O$ are also the special vertices of $O_{lopsp}$.

\begin{lemma}\label{lem:lsp_to_lopsp}
  If $O$ is an lsp-operation, then $O_{lopsp}$ is a lopsp-operation, and $O(G)=O_{lopsp}(G)$ for any embedded graph $G$.\\
   The Delaney-Dress symbols $\mathcal{D}(O)$ and $\mathcal{D}(O_{lopsp})$ are Delaney-Dress symbols of combinatorially isomorphic tilings.
\end{lemma}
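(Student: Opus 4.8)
The plan is to verify the three assertions in order: first that $O_{lopsp}$ is a well-defined lopsp-operation, then that it produces the same result as $O$ on every embedded graph, and finally that the two Delaney-Dress symbols describe combinatorially isomorphic tilings. For the first part I would check each of the three conditions in Definition~\ref{def:lopsp} against the construction. The graph $O_{lopsp}$ is obtained by gluing a mirrored copy of the interior of $O$ into the outer face along the boundary cycle $c$; since $O$ is a plane triangulation whose only non-triangular face is the outer one, the mirrored copy is also such a triangulation, and gluing the two along $c$ produces a plane triangulation with no exceptional face, giving condition~(1). Condition~(2) (no monochromatic edges) is inherited from $O$ and its mirror image, since gluing only identifies boundary vertices with boundary vertices of the same type. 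For condition~(3), the only subtlety is the degrees of boundary vertices of $O$: a boundary vertex $v \neq v_0,v_1,v_2$ of type $1$ has degree $3$ in $O$, and in $O_{lopsp}$ its two copies are merged, so its degree becomes $3+3-2 = 4$; a boundary vertex of type $0$ or $2$ keeps $t\neq 1$; and $v_1$, if of type $1$, had degree $2$ in the boundary of $O$ and still has degree $2$ in $O_{lopsp}$ because the two boundary edges at $v_1$ are identified with their mirror copies. This confirms $O_{lopsp}$ is a lopsp-operation, and 2-connectivity then follows automatically as noted after Definition~\ref{def:lopsp}.

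For the equality $O(G) = O_{lopsp}(G)$, the key observation is that applying $O$ to $G$ glues a copy of $O$ (or its mirror image) into each chamber of $B_G$, whereas applying $O_{lopsp}$ to $G$ glues a copy of the double chamber patch $O_{lopsp,P}$ (for some cut-path $P$) into each double chamber of $D_G$. A double chamber of $G$ is exactly the union of two adjacent chambers of $B_G$ sharing a $0$-edge: one receives a copy of $O$ and the other a mirrored copy. So I would take for $O_{lopsp}$ the natural cut-path $P$ obtained from a path in $O$ from $v_1$ through $v_0$ to $v_2$ lying on the boundary $c$ (this is the boundary of $O$ minus the portion "across" $v_0$ on the other side, or more precisely one can choose $P$ to be the boundary path through $v_0$), and argue that with this choice the double chamber patch $O_{lopsp,P}$ is precisely the union of a copy of $O$ and its mirror image glued along a copy of $c$ minus $P$ — which is exactly what two adjacent chambers of $B_G$ receive under the operation $O$. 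Since the local gluing data agree chamber-by-chamber (two adjacent original chambers of $B_G$ versus one double chamber of $D_G$), and since $B_{O(G)}$ and $B_{O_{lopsp}(G)}$ are built from the same local pieces glued in the same way, the two barycentric subdivisions coincide, hence $O(G)=O_{lopsp}(G)$. Here I would lean on the path-invariance result announced for Chapter~\ref{sec:path-invariance} to know the choice of $P$ is immaterial.

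For the last statement about the Delaney-Dress symbols, I would exhibit an explicit bijection $\phi:\mathcal{D}_{O_{lopsp}} \to \mathcal{D}_O$ and show it is an isomorphism of Delaney-Dress symbols (equivariant for the $\Sigma$-action and compatible with the $m_{ij}$), which by the correspondence in \cite{olaf_ddsymbols},\cite{DressHuson87} between Delaney-Dress symbols and tilings yields combinatorially isomorphic tilings. The chambers of $O_{lopsp}$ are the chambers of $O$ together with their mirror copies; the mirror involution identifies each mirrored chamber $\bar C$ with $C$, and this is exactly how the $\Sigma$-orbits collapse: in $\mathcal{D}(O)$, crossing the outer-face boundary via $\sigma_i$ is a fixed point ($C\sigma_i = C$), whereas in $\mathcal{D}(O_{lopsp})$ that same move sends $C$ to $\bar C$. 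So $\phi$ should send both $C$ and $\bar C$ to $C$; it is surjective but not injective, so instead I would phrase the comparison at the level of tilings: both symbols are obtained by applying the respective operation to the regular hexagonal tiling (as explained in the paragraph before Theorem~\ref{thm:lsp_is_dd}), and since $O(G)=O_{lopsp}(G)$ for all $G$ — in particular for the infinite $3$-regular hexagonal tiling — the two resulting tilings are the same tiling, only equipped with possibly different symmetry groups (the full symmetry group for $\mathcal{D}(O)$, the orientation-preserving subgroup for $\mathcal{D}(O_{lopsp})$). Combinatorial isomorphism of the underlying tilings is then immediate. One can double-check consistency by comparing the $m_{ij}$ values directly: for an interior vertex of $O$ the orbit size doubles when passing to $O_{lopsp}$ (two mirror copies) but the prefactor $1/2$ in the lopsp formula compensates, giving the same $m_{ij}$; for $v_0,v_1,v_2$ the orbit in $\mathcal{D}(O_{lopsp})$ is twice the orbit in $\mathcal{D}(O)$ and again the $1/2$ compensates.

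The main obstacle I anticipate is the middle step: making precise that the double chamber patch $O_{lopsp,P}$ — for a suitably chosen cut-path $P$ — is literally the union of $O$ and a mirrored copy of $O$ along the non-$P$ part of the boundary, and that gluing these patches into double chambers of $D_G$ reproduces exactly the gluing of $O$ and its mirror into the chambers of $B_G$. The bookkeeping of which copy of $v_0$ (namely $v_{0,L}$ versus $v_{0,R}$) is identified with which, and checking this is consistent with orientations around double chambers, is where the argument is most delicate; invoking path-invariance from Chapter~\ref{sec:path-invariance} is what lets one avoid checking all choices of $P$ by hand.
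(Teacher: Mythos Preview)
Your treatment of the first two assertions matches the paper's proof essentially verbatim: the degree bookkeeping $2\deg(o(x))-2$ for boundary vertices, and the choice of the boundary path through $v_0$ as the cut-path so that the double chamber patch is literally $O$ glued to its mirror, with path-invariance from Chapter~\ref{sec:path-invariance} invoked to make the choice of $P$ irrelevant.

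For the third assertion, however, you abandoned the right tool too early. The $2$-to-$1$ map $\phi$ you wrote down and then discarded because it is not a bijection is exactly what the paper uses: it is not an isomorphism of Delaney-Dress symbols but a \emph{morphism} (a Delaney map in the sense of \cite{olaf_ddsymbols}, \cite{DressHuson87}), meaning one checks $\phi(C\sigma_k)=\phi(C)\sigma_k$ and $m_{ij}(C)=m_{ij}(\phi(C))$. Your closing ``double-check'' of the $m_{ij}$ values is precisely the second of these conditions; the first is immediate from how the $\sigma_i$ act in the two symbols. The existence of such a morphism is then cited from the literature as guaranteeing that the two symbols encode combinatorially isomorphic tilings, one obtained from the other by symmetry breaking. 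Your fallback route---applying both operations to the infinite hexagonal tiling and invoking $O(G)=O_{lopsp}(G)$---is intuitively sound but rests on a correspondence the paper itself explicitly marks as ``not formally proven'' (the paragraph before Theorem~\ref{thm:lsp_is_dd}), and moreover the equality $O(G)=O_{lopsp}(G)$ is only established for finite embedded graphs, so as written this does not constitute a rigorous argument within the paper's framework.
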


\begin{proof}
$O_{lopsp}$ is obviously a triangulation of the disc and there are no edges between vertices of the same type. As $O_{lopsp}$ consists of two copies of $O$, glued along the boundary $c$, we can associate a
        unique vertex $o(x)$ of $O$ with every vertex $x$ of $O_{lopsp}$. The degree of $x$ in $O_{lopsp}$ is given by
	\begin{equation*}
		deg(x)=\begin{cases}
			deg(o(x)) &\text{if $o(x)$ is not in $c$}\\
			2deg(o(x))-2 &\text{if $o(x)$ is in $c$}
		\end{cases}.
	\end{equation*}
	
	From the degree restrictions for lsp-operations we can now deduce the conditions on the degrees in the definition of lopsp-operations for $O_{lopsp}$.
        It follows that $O_{lopsp}$ is a lopsp-operation.
	
	Choosing the cut-path in $O_{lopsp}$ that corresponds to the path from $v_1$ to $v_2$ through $v_0$ in the boundary of $O$ for the application
        of $O_{lopsp}$ shows immediately that the results of applying $O$ and $O_{lopsp}$ are isomorphic as the two neighbouring chambers are filled in the same way. The fact
        that the result is independent of the choice of the path will be proven in Chapter~\ref{sec:path-invariance}.

        Mapping each chamber $C_{lopsp}$ of $\mathcal{D}(O_{lopsp})$ onto the corresponding chamber $C$ of $\mathcal{D}(O)$, we have (in the notation of \cite{DressHuson87})
        a morphism between the symbols and in the notation of \cite{olaf_ddsymbols} a Delaney map $f()$, that is:

        For all $k\in \{0,1,2\}$, $(i,j)\in \{(0,1),(0,2),(1,2)\}$, and chambers $C$ of $\mathcal{D}(O_{lopsp})$ we have
        $f(C\sigma_k)=(f(C))\sigma_k$ and $m_{ij}(C)=m_{ij}(f(C))$.

        The existence of such a morphism guarantees (see \cite{olaf_ddsymbols},\cite{DressHuson87}) that $\mathcal{D}(O)$ and $\mathcal{D}(O_{lopsp})$ code combinatorially
        isomorphic tilings and that the tiling coded by $\mathcal{D}(O_{lopsp})$ can be obtained from the tiling coded by $\mathcal{D}(O)$ by {\em symmetry breaking}, that is:
        modifying the tiling, so that the combinatorial structure is preserved, but some metric symmetries of the tiling are destroyed.
\end{proof}

\section{The path invariance of lopsp-operations}\label{sec:path-invariance}

In this chapter we will prove that the result of applying a lopsp-operation is independent of the path chosen to form the double chamber patch. An essential tool are {\em chamber
  flips} which simulate homotopic deformations.

\begin{definition}\label{def:chamber_flip}
  Let $P$ be a directed walk in a triangulation. For any two different vertices of a triangle (or chamber) $C$, there are two different simple paths $P_0$, $P_1$ between these vertices
  in the boundary of $C$. If for $i\in \{0,1\}$ path $P_i$ occurs at a certain position in $P$, then a \emph{chamber flip} of $C$ (at this position) is the operation of replacing $P_i$ by
  $P_{1-i}$.
\end{definition}

As a first tool we will discuss transformations of one path into another:

\begin{lemma}\label{lem:move_path_over_face}

  Let $P,P'$ be two directed paths of the form $P=P_sR$, $P'=P_sR'$ from $x$ to $y$ in a triangulation $T$ of any genus, so that 
  $R'R^{-1}$ is a facial walk of an internally plane face $f$ in the subgraph of $T$ consisting of vertices and edges of $P$ and $P'$.

  Then there is a sequence of paths $P=P_0,P_1,\dots ,P_k=P'$ so that for $1 \le i \le k$ path $P_i$ is obtained from $P_{i-1}$ by a chamber flip and all paths contain only
  vertices of $f$ or in the interior of $f$. As chamber flips can be reversed, the same is true with the role of $P$ and $P'$ interchanged.

\end{lemma}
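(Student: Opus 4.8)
The plan is to recognize the region ``between'' $R$ and $R'$ as a triangulated disc and to sweep $R$ across it one triangle at a time by chamber flips. First I would observe that, since $f$ is a simple face, its internal component $IC(f)$ is defined, and by hypothesis it is plane; its outer boundary is the simple cycle obtained from the facial walk $R'R^{-1}$ after the vertex splittings, and this cycle is split into the arc traced by $R$, from the common endpoint $z$ of $P_s$ and $R$ to $y$, and the arc traced by $R'$, the two arcs meeting only in $z$ and $y$. Moreover every internal face of $IC(f)$ is a face of $T$ lying inside $f$, hence a triangle: because $T$ is a triangulation and, since $f$ is a face of the subgraph $T'=P\cup P'$, every edge of $T$ inside $f$ belongs either to $R\cup R'$ or to a bridge in $f$, it is already an edge of $IC(f)$, so no $T$-edge can run through the interior of an internal face of $IC(f)$. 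Thus $IC(f)$ is a triangulation of a disc, possibly with interior vertices, whose boundary is the union of the two arcs $R$ and $R'$.

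Next I would prove, by induction on the number $N$ of triangular internal faces of $IC(f)$, the following statement: for a triangulated disc with boundary split into two arcs $A$ and $B$ with common endpoints, there is a sequence of chamber flips, through \emph{simple} paths that stay inside the disc, taking the path $A$ to the path $B$. Applying this to $A=R$, $B=R'$ and carrying the fixed prefix $P_s$ along unchanged (all flips will act strictly inside the $R$-part) then gives the Lemma. For $N=0$ the disc degenerates to a single path, so $A=B$ and the empty sequence works. For $N\ge 1$ let $e=(r_0,r_1)$ be the first edge of $A$ and let $\Delta$ be the unique internal triangle of $IC(f)$ incident with $e$, with apex $w\notin\{r_0,r_1\}$. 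If $w$ is an interior vertex of the disc, one chamber flip of $\Delta$ replaces $e$ by the two-edge path $(r_0,w,r_1)$, yielding a simple path $A_1$ that bounds, together with $B$, a triangulated disc with $N-1$ triangles, and we finish by the inductive hypothesis. If $w$ lies on $A$, say $w=r_j$ with $j\ge 2$, then $r_0r_j$ is a chord splitting $IC(f)$ into a disc $\mathbb{D}_L$ containing $\Delta$ (bounded by $(r_0,\dots ,r_j)$ and the chord) and a disc $\mathbb{D}_R$ (bounded by the chord, $(r_j,\dots ,r_m)$ and $B$); I would apply the inductive hypothesis first inside $\mathbb{D}_L$ to turn the prefix $(r_0,\dots ,r_j)$ of $A$ into the single edge $(r_0,r_j)$, and then inside $\mathbb{D}_R$, the intermediate walks staying simple because $\mathbb{D}_L$ and $\mathbb{D}_R$ meet only along the chord. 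The case $w\in B$ is handled analogously: the flip of $\Delta$ then produces a path meeting $B$ in $w$ and bounding with $B$ two triangulated discs that meet only in $w$, each with fewer than $N$ triangles, to which the inductive hypothesis applies in turn. Finally the last sentence of the Lemma is immediate, since a chamber flip replacing $P_i$ by $P_{1-i}$ is undone by the chamber flip replacing $P_{1-i}$ by $P_i$, so reversing the sequence $P_0,\dots ,P_k$ exhibits the transformation of $P'$ into $P$.

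The hard part will not be the geometric picture but the bookkeeping needed to guarantee that every intermediate walk is genuinely a simple path (so that both ``chamber flip'' and ``path'' make literal sense) together with a strictly decreasing number of triangles in each recursive call; this is what forces the case split on the location of the apex $w$ and requires extra care in the degenerate situations where $R$ or $R'$ has length one, or where the chord $r_0r_j$ happens to coincide with a boundary edge, in which cases one must instead peel a triangle off the other end of the arc. Checking that $IC(f)$ is a triangulated disc and that a single flip in the generic case decreases $N$ is routine by comparison.
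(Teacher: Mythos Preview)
Your approach and the paper's are built on the same idea: induct on the number of triangles inside the disc bounded by $R$ and $R'$, and at each step peel off one boundary triangle by a chamber flip. The difference is organizational. You fix attention on the triangle $\Delta$ sitting on the \emph{first} edge of $A$ and split into three cases according to where its apex $w$ lies (interior, on $A$, on $B$), recursing into one or two strictly smaller subdiscs. The paper instead first proves that among all chambers whose intersection with the boundary cycle $R'R^{-1}$ is connected and contains an edge there are at least two (by a distance argument in the inner dual), and then observes that any such chamber can be flipped to produce a path $P_1$ or $P_{k-1}$ \emph{unless} its two boundary edges meet at the corner $x'$ or at $y$; in that residual case the chamber at $x'$ straddles $R$ and $R'$, and flipping its $R$-edge still yields a valid $P_1$. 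So the paper trades your three-way case split for a short existence lemma plus a single corner case.

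Your deferred degenerate cases are genuinely resolvable along the lines you sketch. The only way ``peel from the other end'' can also fail is that the triangle on the last edge of $A$ has apex $z$ while the triangle on the first edge has apex $y$ and $B$ has length one; but then both triangles share the edge $(z,y)=B$, hence coincide, forcing $|A|=2$ and $N=1$, which is immediate. One small point to watch: in your ``$w\in B$'' case with $w$ strictly interior to $B$, after the flip the region is not a single disc but two discs pinched at $w$ (as you say), and you must be careful that the intermediate paths through the first subdisc do not meet the suffix $(r_1,\dots,r_m)$ lying in the second; this holds precisely because the two subdiscs meet only in $w$, so it is worth stating explicitly.
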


\begin{proof}

  We will prove this by induction on the number $c=|C_f|$ with $C_f$ the set of chambers of $T$ inside $f$.  If $c=1$, $R$ and $R'$ are the two paths along the boundary of a chamber,
  so one can be transformed into the other by a chamber flip. First we will prove that if $c\ge 2$ there are at least two chambers in $C_f$ with a connected intersection with the facial walk
  $R'R^{-1}$ containing at least one edge: If all chambers have this property, we are done. Otherwise there is a chamber $C$ containing an edge and with a disconnected intersection
  with $R'R^{-1}$. This chamber splits the face into two parts. Let $C_0$ be a chamber in one of the parts containing an edge of $R'R^{-1}$ that has the largest distance to $C$
  along a path in the inner dual. If this chamber had a disconnected intersection with $R'R^{-1}$, it would again split the set of chambers into two parts and each path from the
  part containing $C$ into the other must pass $C_0$, in contradiction with the maximal distance between $C$ and $C_0$. So in each part there is such a chamber.

  If there is a chamber with connected intersection with $R'R^{-1}$ consisting of a single edge or of two edges 
  not containing the start vertex $x'$ or the end vertex $y$
  of $R$ and $R'$  as the middle vertex, we can do a chamber flip to obtain either a path $P_1$ or $P_{k-1}$, 
  so that we can apply induction to $P_1,P'$ or $P,P_{k-1}$ and use that each chamber flip can be undone by a reverse chamber flip.

  Otherwise $x'$ is contained in a chamber of $C_f$ with one edge of $P$ and one edge of $P'$. Applying a chamber flip using the edge of $P$, we get a path $P_1$ to which we can apply induction.
  \end{proof}

\begin{lemma}\label{lem:chamber_flip_whole_paths}
Let $Q,Q'$ be two directed paths from $x$ to $y$ in a plane triangulation $T$, and $z$ a vertex not contained in any of the paths.

  Then there is a sequence of paths $Q=Q_0,Q_1,\dots ,Q_k=Q'$ from $x$ to $y$ so that for $1 \le i \le k$ path $Q_i$ is obtained from $Q_{i-1}$ by a chamber flip and none of the paths contain $z$.
  
\end{lemma}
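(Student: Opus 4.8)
The plan is to deform $Q$ into $Q'$ by a succession of chamber flips, each performed over a single chamber that lies strictly between $Q$ and $Q'$, so that $z$ --- which we will push ``to infinity'' --- is never touched; Lemma~\ref{lem:move_path_over_face} supplies the justification for each individual step.

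To begin, I would choose the planar embedding of $T$ whose outer face is one of the faces incident with $z$. Since $z$ does not lie on $Q$ or on $Q'$, both paths are contained in the subgraph $G := Q \cup Q'$ of $T$, and $z$ lies in the unbounded face of $G$. Hence no bounded face of $G$ contains $z$, and no chamber of $T$ that lies inside a bounded face of $G$ has $z$ among its vertices; a chamber flip of $Q$ over such a chamber therefore yields a path that again avoids $z$ and agrees with $Q$ outside that chamber.

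Next I would induct on $A(Q,Q')$, the number of faces of $T$ lying in the bounded faces of $G$ (the ``area enclosed between $Q$ and $Q'$''). If $A(Q,Q')=0$ then $G$ contains no cycle --- a cycle of $G$ would enclose at least one face of $T$ --- so $G$ is a tree, and $Q$ and $Q'$ both equal the unique path from $x$ to $y$ in $G$; the empty sequence works. If $A(Q,Q')\ge 1$, I would locate a chamber $C$ of $T$ that lies inside a bounded face $f$ of $G$, shares at least one edge with $Q$, and, after the flip, moves out of the region enclosed between $Q$ and $Q'$; concretely one arranges this by taking $f$ adjacent to the unbounded face of $G$ along an edge of $Q$ that is not on $Q'$, and $C$ the chamber of $T$ inside $f$ at that edge. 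Flipping $Q$ over $C$ (formally: a single chamber flip, as in the base case of Lemma~\ref{lem:move_path_over_face}, with the unchanged portion of $Q$ as common prefix) gives a path $Q_1$ that avoids $z$ and satisfies $A(Q_1,Q')<A(Q,Q')$; applying the induction hypothesis to $Q_1,Q'$ and prepending the flip $Q\to Q_1$ completes the step, and interchanging the roles of $Q$ and $Q'$ shows we may flip whichever path is more convenient.

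The main obstacle is to carry out this inductive step so that $Q_1$ is again a \emph{simple} path and $A$ genuinely drops. When $C$ meets the boundary cycle of $f$ only in the edge it shares with $Q$, its third vertex lies in the open region $f$ and is therefore not a vertex of $G$, so $Q_1$ is automatically simple; but $Q$ and $Q'$ may traverse their common vertices in different cyclic orders --- they may ``cross'' --- and then a bounded face of $G$ need not be a simple lens between one arc of $Q$ and one arc of $Q'$, so $C$ may meet $\partial f$ in two edges. In that situation one must flip $Q$ over $C$ along whichever of the two boundary paths of $C$ the path $Q$ follows, which \emph{removes} the shared intermediate vertex rather than inserting a new one, and then argue, using planarity of $T$ together with the description of $G$ as a union of two paths from $x$ to $y$, that $Q_1$ is still simple and that the enclosed area strictly decreases; it may be cleanest to first reduce to the case where $G$ is $2$-connected by splitting $Q$ and $Q'$ at their common bridges and cut vertices and treating the resulting pieces independently. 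Establishing the existence of a suitable chamber $C$ in all of these cases is the technical heart of the argument.
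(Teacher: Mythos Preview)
Your approach---induct on the enclosed area $A(Q,Q')$ and peel off one chamber at a time---is genuinely different from the paper's, which inducts on the length of the common prefix of $Q$ and $Q'$ and at each step identifies a single ``bubble'' (the cycle $Q_{a,b}\cup Q'_{a,b}$, where $a$ is the first divergence and $b$ the next vertex on $Q'$ that also lies on $Q$) and then invokes Lemma~\ref{lem:move_path_over_face} to swap $Q_{a,b}$ for $Q'_{a,b}$ in one block. The paper's route is more modular: Lemma~\ref{lem:move_path_over_face} already guarantees that all intermediate walks are simple paths confined to the bubble, so the only remaining work is a four-case analysis of where $Q_{x,a}$ and $Q_{b,y}$ sit relative to the cycle (Figure~\ref{fig:face_options}). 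Your route would, if completed, essentially re-prove Lemma~\ref{lem:move_path_over_face} inside the argument rather than use it as a black box.

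The gap you flag is real and is not closed by the sketch you give. The problematic case is precisely when the third vertex of the candidate chamber $C$ already lies on $Q$ at a non-adjacent position: then the flip produces a non-simple walk, and you have not shown how to avoid this or recover from it. Your proposed reduction ``split at cut vertices of $G=Q\cup Q'$ and treat the $2$-connected blocks separately'' does not suffice as stated: cut vertices of $G$ are necessarily common to both paths, but common vertices need not be cut vertices---$Q$ and $Q'$ can visit shared vertices in different orders (they can ``cross''), and then a $2$-connected block of $G$ is not a single lens bounded by one arc of $Q$ and one arc of $Q'$, so you are back to the hard case. The paper sidesteps this entirely: by choosing $b$ as the next common vertex \emph{along $Q'$} (not along $Q$), the region $Q_{a,b}\cup Q'_{a,b}$ is always a simple cycle, and the possibly complicated remainder $Q_{b,y}$ is handled by the case split rather than by any structural hypothesis on $G$. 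If you want to salvage the area-induction approach, the cleanest fix is to use Lemma~\ref{lem:move_path_over_face} on each bounded face of $G$ adjacent to the outer face along an arc of $Q$, rather than flipping a single chamber; but at that point you have essentially reconstructed the paper's argument with a different bookkeeping parameter.
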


\begin{proof}
 
  We will prove this by induction on the number $n_e$ of edges in the beginning of $Q$ that $Q$ and $Q'$ have in common. Remember that for vertices $v,v'$ in a path $Q$ we write $Q_{v,v'}$ for the subpath of $Q$ from $v$ to $v'$.

  If $n_e=|Q'|$, then $Q=Q'$, so assume that $n_e<|Q'|$ and
  that the assumption is true for $n_e'>n_e$. Then there is a first vertex $a$ in $Q$ that is incident with an edge that is in $Q'$ but not in $Q$. Let $b$ be the next vertex after $a$ on $Q'$ that $Q'$
  shares with $Q$. We will show that $Q$ can be transformed to $Q'_{x,a}Q'_{a,b}Q_{b,y}$ in the described way, so that we can apply induction to transform $Q'_{x,a}Q'_{a,b}Q_{b,y}$ to $Q'$.

  $Q_{a,b} \cup Q'_{a,b}$ is a cycle and we call the face containing $z$ the exterior. Note that neither $Q'_{x,a}=Q_{x,a}$ nor $Q_{b,y}$ can intersect $Q_{a,b} \cup Q'_{a,b}$.

  So there are four possible combinations (depicted in Figure~\ref{fig:face_options}) of where $Q_{x,a}$ and $Q_{b,y}$ are situated.
  If they are in the interior, we use them as part of the face boundary when applying Lemma~\ref{lem:move_path_over_face},
  otherwise not. As Lemma~\ref{lem:move_path_over_face} already allows to consider also paths with a beginning common part outside the face, we can choose
  $P,P'$ from  Lemma~\ref{lem:move_path_over_face} in the following way:
  \medskip

  \begin{tabular}{ll}
  $Q_{b,y}$ outside: & Choose $P=Q_{x,a}Q_{a,b}$, $P'=Q'_{x,a}Q'_{a,b}$.\\
  $Q_{b,y}$ inside:& Choose $P=Q_{x,a}Q_{a,b}Q_{b,y}$, $P'=Q'_{x,a}Q'_{a,b}Q_{b,y}$. \\
  \end{tabular}

  \medskip

  Note that in case $Q_{x,a}$ is outside  $Q_{a,b} \cup Q'_{a,b}$ it forms the $P_s$ from Lemma~\ref{lem:move_path_over_face}, otherwise $P_s$ consists of a single vertex.
  In each case Lemma~\ref{lem:move_path_over_face} can be applied to prove that $Q$ can be transformed to
  $Q'_{x,a}Q'_{a,b}Q_{b,y}$ in the described way, and as the beginning of $Q'_{x,a}Q'_{a,b}Q_{b,y}$ has more than $n_e$ edges in common with $Q'$, we can apply induction.
\end{proof}

\begin{lemma}\label{lem:chamber_flip_paths}
	Let $O$ be a lopsp-operation and let $P$ and $P'$ be two cut-paths in $O$. There exists a series of walks $P=P_0,\ldots, P_k=P'$ with
        endpoints $v_1$ and $v_2$, such that every $P_{i+1}$ can be obtained from $P_i$ by applying one chamber flip, and $v_0$, $v_1$, and $v_2$ each appear exactly once in every
        $P_i$. 
\end{lemma}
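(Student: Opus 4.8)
The plan is to split each cut-path at $v_0$ and move the two arms one after the other, applying Lemma~\ref{lem:chamber_flip_whole_paths} each time with the \emph{other} terminal vertex in the role of the forbidden vertex. Write $P=P_{v_1,v_0}P_{v_0,v_2}$ and $P'=P'_{v_1,v_0}P'_{v_0,v_2}$. Since $P$ and $P'$ are cut-paths, each is a simple path from $v_1$ to $v_2$ on which $v_0$ occurs exactly once; hence $P_{v_1,v_0}$ and $P'_{v_1,v_0}$ are simple paths from $v_1$ to $v_0$ avoiding $v_2$, and $P_{v_0,v_2}$ and $P'_{v_0,v_2}$ are simple paths from $v_0$ to $v_2$ avoiding $v_1$. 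Recall also that a lopsp-operation is a plane triangulation, so Lemma~\ref{lem:chamber_flip_whole_paths} is available.

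First I would transform $P$ into $P'_{v_1,v_0}P_{v_0,v_2}$. Apply Lemma~\ref{lem:chamber_flip_whole_paths} in $O$ to the paths $P_{v_1,v_0}$ and $P'_{v_1,v_0}$ from $v_1$ to $v_0$, with forbidden vertex $z=v_2$ (legitimate, as neither path contains $v_2$). This gives simple paths $P_{v_1,v_0}=Q_0,\dots,Q_m=P'_{v_1,v_0}$ from $v_1$ to $v_0$, none containing $v_2$, each obtained from the previous by a chamber flip. Appending the fixed arm $P_{v_0,v_2}$ yields walks $Q_iP_{v_0,v_2}$ from $v_1$ to $v_2$; a chamber flip turning $Q_i$ into $Q_{i+1}$ replaces a subpath of a chamber boundary that lies entirely inside the $Q_i$-part, so it equally turns $Q_iP_{v_0,v_2}$ into $Q_{i+1}P_{v_0,v_2}$. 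In $Q_iP_{v_0,v_2}$ the vertex $v_1$ occurs once (as $Q_i$ is simple and $v_1\notin P_{v_0,v_2}$), $v_2$ occurs once (as $v_2\notin Q_i$ and $P_{v_0,v_2}$ is simple), and $v_0$ occurs once (the simple path $Q_i$ meets $v_0$ only at its end, and $P_{v_0,v_2}$ only at its start). So this portion of the sequence has all the required properties and ends at $P'_{v_1,v_0}P_{v_0,v_2}$.

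The second step is symmetric: apply Lemma~\ref{lem:chamber_flip_whole_paths} to $P_{v_0,v_2}$ and $P'_{v_0,v_2}$ from $v_0$ to $v_2$ with forbidden vertex $z=v_1$, prepend the now-fixed arm $P'_{v_1,v_0}$, and obtain a sequence of chamber flips from $P'_{v_1,v_0}P_{v_0,v_2}$ to $P'_{v_1,v_0}P'_{v_0,v_2}=P'$ in which, by the same counting argument, $v_0$, $v_1$, and $v_2$ each occur exactly once in every walk. Concatenating the two sequences produces the desired $P=P_0,\dots,P_k=P'$.

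The only delicate point is the bookkeeping of the three special vertices, and the two structural facts that make it work are: that $O$ is a genuine plane triangulation, so that Lemma~\ref{lem:chamber_flip_whole_paths} applies and produces \emph{simple} intermediate paths (which is exactly what prevents $v_0$ from reappearing when an arm is glued on), and that the forbidden vertex handed to that lemma is precisely the terminal vertex of the arm being held fixed. I do not expect any serious obstacle beyond verifying these.
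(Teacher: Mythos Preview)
Your proof is correct and follows essentially the same approach as the paper: apply Lemma~\ref{lem:chamber_flip_whole_paths} first to the arm from $v_1$ to $v_0$ with $z=v_2$, then to the arm from $v_0$ to $v_2$ with $z=v_1$, concatenating with the fixed arm at each stage. Your write-up is in fact more explicit than the paper's (which dispatches the lemma in two sentences) about why each of $v_0,v_1,v_2$ appears exactly once in every intermediate walk; the key observation you spell out---that Lemma~\ref{lem:chamber_flip_whole_paths} yields \emph{simple} intermediate paths, so $v_0$ cannot reappear in the moving arm---is exactly what the paper leaves implicit.
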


Note that $P_1,\ldots, P_{k-1}$ need not be (simple) paths and that only for $v_0$, $v_1$ and $v_2$ it is guaranteed that they occur exactly once in these paths.

\begin{proof}
  This is now a direct consequence of Lemma~\ref{lem:chamber_flip_whole_paths} when applying the lemma first with $Q=P_{v_1,v_0}$, $Q'=P'_{v_1,v_0}$, $z=v_2$ and afterwards with
  $Q=P_{v_0,v_2}$, $Q'=P'_{v_0,v_2}$, $z=v_1$. Intermediate walks need not be paths, as the interior of a path $(P_i)_{v_1,v_0}$ can intersect with the interior
  of $(P_i)_{v_0,v_2}$.
\end{proof}

	\begin{figure}
		\centering
		\begin{tikzpicture}[scale=1.5]
\tikzset{every node/.style={shape=circle, draw=black, scale=0.4, fill=black}}
\tikzset{P/.style={draw=black, very thick}}
\tikzset{F/.style={draw=red, ultra thick}}
\tikzset{F'/.style={draw=blue, ultra thick}}

\coordinate (xcoo) at (0,0);
\coordinate (ycoo) at (3,0);
\coordinate (zcoo) at (0.4,0.8);
\coordinate (acoo) at (0.8,0);
\coordinate (bcoo) at (2.2,0);

\draw[draw=none]  (3.5,0) to[in=-70, out=-90, looseness=1.2] (bcoo);

\draw[F', fill=blue!10!white] (acoo) to[in=90, out=90, looseness=1.5] (bcoo) ;
\node[draw=none, fill=none] at (1.5, 0.8) {\huge$Q'_{a,b}$};

\def\length{0.02}
\begin{scope}[F]
\draw (0,-\length) -- (0.8,-\length)  node[midway, above, draw=none, fill=none] {\huge$Q_{x,a}$};
\end{scope}

\begin{scope}[F']
\draw(0,\length) -- (0.8,\length);
\end{scope}

\node[label=180:\Huge{$x$}] (x) at (xcoo) {};
\node[label={[label distance=0.2cm]-90:\Huge{$a$}}] (a) at (acoo) {};
\node[label=-90:\Huge{$b$}] (b) at (bcoo) {};
\node[label=0:\Huge{$y$}] (y) at (ycoo) {};
\node[label=left:\Huge{$z$}] (v2) at (zcoo) {};

\begin{scope}[P]
\draw (b) -- (y) node[midway, above, draw=none, fill=none] {\huge$Q_{b,y}$};
\end{scope}

\begin{scope}[F]
\draw (a)  -- (b)  node[midway, below, draw=none, fill=none] {\huge$Q_{a,b}$};
\end{scope}

\end{tikzpicture}\quad		\begin{tikzpicture}[scale=1.5]
\tikzset{every node/.style={shape=circle, draw=black, scale=0.4, fill=black}}
\tikzset{noNode/.style={draw=none}}
\tikzset{P/.style={draw=black, very thick}}
\tikzset{F/.style={draw=red, ultra thick}}
\tikzset{F'/.style={draw=blue, ultra thick}}

\coordinate (xcoo) at (0,0);
\coordinate (ycoo) at (3,0);
\coordinate (zcoo) at (0.0,1.0);
\coordinate (acoo) at (0.8,0);
\coordinate (bcoo) at (2.2,0);

\draw[draw=none]  (3.5,0) to[in=-70, out=-90, looseness=1.2] (bcoo);

\draw[F', fill=blue!10!white] (bcoo) to[in=90, out=110, looseness=1.1] (-0.5,0) to[in=-110, out=-90, looseness=1.2] (acoo);
\node[draw=none, fill=none] at (0.8, 1.05) {\huge$Q'_{a,b}$};

\def\length{0.02}
\begin{scope}[F']
\draw (0.8,-\length) -- (0,-\length);
\end{scope}

\begin{scope}[F]
\draw(0.8,\length) -- (0,\length)  node[midway, above, draw=none, fill=none] {\huge$Q_{a,x}$};
\end{scope}

\node[label=left:\Huge{$x$}] (x) at (xcoo) {};
\node[label={[label distance=0.2cm]-80:\Huge{$a$}}] (a) at (acoo) {};
\node[label=-90:\Huge{$b$}] (b) at (bcoo) {};
\node[label=0:\Huge{$y$}] (y) at (ycoo) {};
\node[label=left:\Huge{$z$}] (z) at (zcoo) {};

\begin{scope}[P]
\draw (b) -- (y) node[midway, above, draw=none, fill=none] {\huge$Q_{b,y}$}; 
\end{scope}

\begin{scope}[F]
\draw (a)  -- (b) node[midway, below, draw=none, fill=none] {\huge$Q_{a,b}$};
\end{scope}


\end{tikzpicture}
		\begin{tikzpicture}[scale=1.5]
\tikzset{every node/.style={shape=circle, draw=black, scale=0.4, fill=black}}
\tikzset{P/.style={draw=black, very thick}}
\tikzset{F/.style={draw=red, ultra thick}}
\tikzset{F'/.style={draw=blue, ultra thick}}

\coordinate (xcoo) at (0,0);
\coordinate (ycoo) at (3,0);
\coordinate (zcoo) at (0.4,0.8);
\coordinate (acoo) at (0.8,0);
\coordinate (bcoo) at (2.2,0);

\draw[F', fill=blue!10!white] (acoo) to[in=90, out=70, looseness=1.1] (3.5,0) to[in=-70, out=-90, looseness=1.2] (bcoo);
\node[draw=none, fill=none] at (2.2, 1.05) {\huge$Q'_{a,b}$};

\def\length{0.02}
\begin{scope}[F']
\draw (0,\length) -- (0.8,\length)  node[midway, above, draw=none, fill=none] {\huge$Q_{x,a}$};
\draw (2.2,-\length) -- (3,-\length);
\end{scope}

\begin{scope}[F]
\draw(2.2,\length) -- (3,\length) node[midway, above, draw=none, fill=none] {\huge$Q_{b,y}$};
\draw (0,-\length) -- (0.8,-\length);
\end{scope}

\node[label=left:\Huge{$x$}] (x) at (xcoo) {};
\node[label={[label distance=0.3cm]-80:\Huge{$a$}}] (a) at (acoo) {};
\node[label=-100:\Huge{$b$}] (b) at (bcoo) {};
\node[label=10:\Huge{$y$}] (y) at (ycoo) {};
\node[label=left:\Huge{$z$}] (z) at (zcoo) {};

\begin{scope}[P]
\end{scope}

\begin{scope}[F]
\draw (a)  -- (b) node[midway, below, draw=none, fill=none] {\huge$Q_{a,b}$};
\end{scope}

\end{tikzpicture}\quad
		\begin{tikzpicture}[scale=1.5]
\tikzset{every node/.style={shape=circle, draw=black, scale=0.4, fill=black}}
\tikzset{noNode/.style={draw=none, scale=0.01}}
\tikzset{P/.style={draw=black, very thick}}
\tikzset{F/.style={draw=red, ultra thick}}
\tikzset{F'/.style={draw=blue, ultra thick}}

\coordinate (xcoo) at (0,0);
\coordinate (ycoo) at (3,0);
\coordinate (zcoo) at (0.0,1.0);
\coordinate (acoo) at (0.8,0);
\coordinate (bcoo) at (2.2,0);

\draw [F', fill=blue!10!white] (2.2,0) to[out=-70,in=-90, looseness=1.2](3.5,0) to[out=90,in=90, looseness=0.9] (-0.5,0) to[out=-90,in=-110, looseness=1.2] (0.8,0);
\node[draw=none, fill=none] at (1.5, 1.25) {\huge$Q'_{a,b}$};

\def\length{0.02}
\begin{scope}[F']
\draw (0,-\length) -- (0.8,-\length);
\draw (2.2,-\length) -- (3,-\length);
\end{scope}

\begin{scope}[F]
\draw(0,\length) -- (0.8,\length) node[midway, above, draw=none, fill=none] {\huge$Q_{x,a}$};
\draw(2.2,\length) -- (3,\length) node[midway, above, draw=none, fill=none] {\huge$Q_{b,y}$};
\end{scope}

\node[label=left:\Huge{$x$}] (x) at (xcoo) {};
\node[label={50:\Huge{$a$}}] (a) at (acoo) {};
\node[label=120:\Huge{$b$}] (b) at (bcoo) {};
\node[label=0:\Huge{$y$}] (y) at (ycoo) {};
\node[label=left:\Huge{$z$}] (z) at (zcoo) {};

\begin{scope}[P]

\end{scope}

\begin{scope}[F]
\draw (a)  -- (b) node[midway, below, draw=none, fill=none] {\huge$Q_{a,b}$};
\end{scope}

\end{tikzpicture}
		\caption{\label{fig:face_options} The four different cases in the proof of Lemma \ref{lem:chamber_flip_whole_paths} are shown here. The shaded area represents the interior.}
	\end{figure}
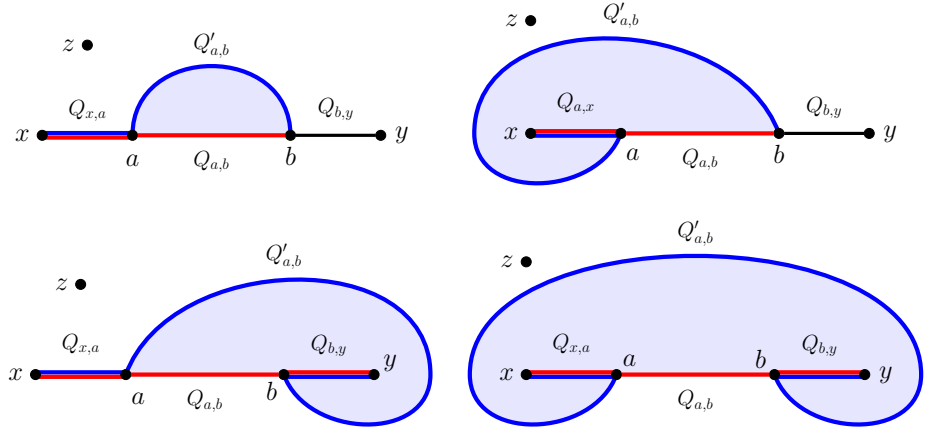

Let $G$ be an embedded graph, $O$ a lopsp-operation with cut-path $P$ and $O_P$ the corresponding double chamber patch.
As before, the result of applying the operation $O$ with path $P$ to $G$ is denoted by ${O_P}(G)$. 
	
As ${O_P}(G)$ is obtained by glueing copies of $O_P$ into $D_G$, every vertex $v$ in ${O_P}(G)$ is in at least one copy of $O_P$. If $v$ is in more than
        one copy, $v$ corresponds to the same vertex of $O$ in each of these copies. Similarly, every edge or face of ${O_P}(G)$ also corresponds to
        exactly one edge or face of $O$ respectively. This allows us to define a surjective map $\pi_P$, that maps every vertex, edge and face of ${O_P}(G)$ to its
        corresponding vertex, edge or face of $O$. 
	
	We define the map $\invpi$, that maps a set $X$ of vertices, edges or faces in $O$ to the set of all the vertices, edges or faces in ${O_P}(G)$ whose image under
        $\pi_P$ is in $X$. If we apply $\invpi$ to a single vertex, edge or face $x$ of $O$, we will often write $\invpi(x)$ instead of $\invpi(\{x\})$. Analogously we can also define $\invpi(G')$
        for subgraphs $G'$ of $O$ and if $\invpi(G')$ is a connected graph, we can interpret it as an embedded graph by the embedding induced by ${O_P}(G)$.

\begin{lemma}\label{lem:glue_lopsp_is_bary}
  Let $G$ be an embedded graph and let $O$ be a lopsp-operation with a cut-path $P$. 
  The labelled embedded graph ${O_P}(G)$ is the barycentric subdivision of a connected embedded graph.
\end{lemma}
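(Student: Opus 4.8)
The plan is to verify that $O_P(G)$ has the three properties that characterise barycentric subdivisions, and then to read off the embedded graph it subdivides. Recall the combinatorial form of the correspondence underlying the definition of $B_G$: a connected labelled embedded graph $T$ equals $B_H$ for a (necessarily connected) embedded graph $H$ if and only if every face of $T$ is a triangle, no edge of $T$ joins two vertices of the same type, and every vertex of type $1$ has degree $4$; in that case $H$ has the $0$-, $1$- and $2$-vertices of $T$ as its vertices, edges and faces, a $1$-vertex is incident in $H$ with its two (possibly equal, giving a loop) $0$-neighbours, and the rotation at a $0$-vertex of $H$ is read off from the cyclic order in $T$ of the incident $1$-vertices, interleaved with the $2$-vertices that become the incident faces. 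So I would check these three properties for $T=O_P(G)$.

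Two of them are immediate from the projection $\pi_P$. Every face of $O_P(G)$ is an inner face of a glued-in copy of $O_P$, and the inner faces of $O_P$ are exactly the chambers of $O$, which are triangles by the first condition in Definition~\ref{def:lopsp}; since the gluing only identifies boundary paths vertex by vertex it creates no new faces, so $O_P(G)$ is a triangulation. Every edge of $O_P(G)$ lies in a copy of $O_P$ and so corresponds under $\pi_P$ to an edge of $O$ with the same unordered pair of endpoint types; by the second condition in Definition~\ref{def:lopsp} this pair is never constant, so $O_P(G)$ has no monochromatic edge. Connectedness is also routine: the $0$-vertices of $D_G$ are pairwise joined inside $D_G$ (for an edge $e=uv$ of $G$ the path from $u$ to $v$ through the $1$-vertex $e$ uses only edges of $D_G$, and $G$ is connected), and every $1$- or $2$-vertex of $D_G$ is adjacent in $D_G$ to a $0$-vertex (to an endpoint, respectively to a boundary vertex), so $D_G$ is connected; hence so is $D_{G,P}$, obtained from it by subdividing edges along connected paths, and gluing the connected patches $O_P$ (connected because $O$ is) onto $D_{G,P}$ leaves the result connected.

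The substantive step is that every $1$-vertex $w$ of $O_P(G)$ has degree $4$, and this is where I expect the real difficulty. Let $y=\pi_P(w)$, a $1$-vertex of $O$; then $y\notin\{v_0,v_2\}$ since $t(v_0),t(v_2)\neq 1$, so either $y=v_1$ with $deg_O(y)=2$ or $y$ is non-special with $deg_O(y)=4$. If $y$ does not lie on the cut-path $P$ it lies in the interior of the unique face of $P$ in $O$, so $w$ is an interior vertex of a single copy of $O_P$ and $deg_{O_P(G)}(w)=deg_O(y)=4$ (here $y\neq v_1$, an endpoint of $P$). If $y$ lies on $P$, then $y=v_1$ or $y$ is interior to $P$, using respectively $1$ or $2$ edges of $P$; forming $O_P$ keeps $y$ as one vertex in the first case and splits it into $y_L,y_R$ in the second, and in both cases duplicates each edge of $y$ lying on $P$, so the degree of $y$ in $O_P$, respectively the sum of the degrees of $y_L$ and $y_R$, equals $deg_O(y)$ plus the number of edges of $y$ on $P$, namely $3$ or $6$. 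Moreover every vertex of $D_{G,P}$ over $y$ lies in exactly two copies of $O_P$ --- for $y=v_1$ because the corresponding vertex of $D_G$ is a $1$-vertex of $B_G$, which has degree $2$ in $D_G$ (its two $0$-edges of $B_G$ are deleted) and hence lies in two double chambers, and for $y$ interior to $P$ because such a vertex lies on a subpath replacing an edge of $D_G$, which borders two double chambers. Identifying the boundary edges of these two copies of $O_P$ in pairs welds their partial fans at $w$ into one closed fan, and the bookkeeping --- two copies of degree $3$ with two boundary edges identified in pairs when $y=v_1$; two copies of total degree $6$ with the two on-$P$ edges of $y$ becoming identified boundary edges when $y$ is interior to $P$ --- gives $deg_{O_P(G)}(w)=4$ in each case. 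This precise matching of the copies of $O_P$ around a vertex over the cut-path, including checking that opposite (not equal) split-copies of $y$ get identified, is the main obstacle; everything else is routine. With the three properties verified, the characterisation above produces a connected embedded graph $H$ with $B_H=O_P(G)$, which is the assertion of the lemma.
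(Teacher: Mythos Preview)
Your proof is correct and follows essentially the same approach as the paper's: verify that $O_P(G)$ is a triangulation with no monochromatic edges and every $1$-vertex of degree~$4$, then read off the underlying graph. The paper disposes of all three structural properties in a single sentence (``by construction''), whereas you spell out the degree-$4$ case analysis in detail; your treatment there is accurate, including the point that a vertex over an interior point of the cut-path is identified with opposite split-copies $y_L,y_R$ in the two adjacent patches. For connectedness the two arguments differ slightly: the paper argues directly that the resulting graph $G'$ is connected by transforming an arbitrary path in $O_P(G)$ between $0$-vertices into one using only type-$2$ edges, while you instead show $O_P(G)$ itself is connected and invoke that the barycentric subdivision of a disconnected graph is disconnected. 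Both routes work and are of comparable difficulty.
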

\begin{proof}
	By construction there are no edges between vertices of the same
        type and every vertex of type 1 has degree 4. Furthermore every face of ${O_P}(G)$ is a triangle. With these properties, an embedded graph $G'$
        is defined by ${O_P}(G)$ such that ${O_P}(G)$ is its barycentric subdivision.

        The fact that $G'$ is connected follows from the fact that ${O_P}(G)$ is connected and that the edges of type 2 of chambers sharing the same vertex of type 2 form a connected subgraph.
        This way each path between two vertices of type 0 in ${O_P}(G)$ can be transformed into a path consisting only of edges of type 2.
\end{proof}

The definition of ${O_P}(G)$ depends on $P$. One of the main results of this paper is that the result is in fact independent of $P$, so that we can define $O(G)$ for a lopsp-operation $O$.

\begin{theorem}\label{thm:indep_of_paths}
	Let $O$ be a lopsp-operation and let $P$ and $Q$ be two cut-paths in $O$. Let $G$ be an embedded graph. Then ${O_P}(G) \cong {O_Q}(G)$.
\end{theorem}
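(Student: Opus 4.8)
The plan is to reduce the statement, via Lemma~\ref{lem:chamber_flip_paths}, to the case of a single chamber flip, and to show that performing one chamber flip on the cut-walk does not change the result of the operation up to label-preserving isomorphism. Since Lemma~\ref{lem:chamber_flip_paths} produces intermediate walks $P_1,\dots,P_{k-1}$ that need not be simple paths, the first task is to make sense of $O_W(G)$ for an arbitrary \emph{cut-walk} $W$ of $O$, that is, a directed walk from $v_1$ to $v_2$ through $v_0$ in which each of $v_0,v_1,v_2$ occurs exactly once. One defines $O_W(G)$ by the same recipe as $O_P(G)$: first form $D_{G,W}$ by replacing every type-$2$ edge of $D_G$ by a copy of $W_{v_0,v_1}$ and every type-$1$ edge by a copy of $W_{v_0,v_2}$ (so that copies of $v_i$ are identified with $i$-vertices), then glue into each face of $D_{G,W}$ a copy of the internal component of the region of $O$ cut out along $W$ --- the ``double chamber patch'' $O_W$, now a $4$-gon with corners $v_1,v_2,v_{0,L},v_{0,R}$ that may touch itself along its boundary when $W$ is not simple. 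Because the only properties used in the proof of Lemma~\ref{lem:glue_lopsp_is_bary} are that every face is a triangle, that there are no monochromatic edges, and that every type-$1$ vertex has degree $4$, and all three survive cutting $O$ along a walk meeting $v_0,v_1,v_2$ only once, the same argument shows that $O_W(G)$ is again the barycentric subdivision of a connected embedded graph; in particular it is a legitimate object to compare.

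The heart of the proof is then the following claim: if $W'$ is obtained from a cut-walk $W$ by a chamber flip at a chamber $C$ of $O$, then $O_W(G) \cong O_{W'}(G)$ by a label-preserving isomorphism. Say the flip replaces, at some position of $W$, the two-edge path $e e'$ along $\partial C$ by the remaining edge $e''$ of $\partial C$ (the reverse direction is symmetric). Both $O_W$ and $O_{W'}$ are ``$O$ cut along the walk'', and the cuts coincide outside $\partial C$; consequently the two glued graphs have the same chambers --- one copy of each chamber of $O$ inside each copy of the patch, indexed by the faces of $D_G$ --- and all their identifications agree except for those coming from the edges $e, e', e''$. In $O_W(G)$ the edges $e$ and $e'$ are ``seams'' (their copies sit on the boundary $4$-gons of patches and are identified across faces of $D_{G,W}$) while $e''$ is internal to each patch; in $O_{W'}(G)$ it is the other way around. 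I would check directly that re-routing the cut across $C$ produces the very same identification space: around every copy of $C$ the neighbouring chambers $C_e, C_{e'}, C_{e''}$ of $O$ across $e,e',e''$ are glued into the same local configuration of four triangles in both graphs --- the flip only changes the bookkeeping label (``inside one patch'' versus ``across a seam'') of three of these gluings, not the gluings themselves. Tracking the one remaining vertex of $C$ that has no special type, and the identification of $v_{0,L}$ with $v_{0,R}$ in the case where $C$ is one of the chambers incident with $v_0$, yields the explicit isomorphism $\varphi \colon O_W(G) \to O_{W'}(G)$.

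Finally, Lemma~\ref{lem:chamber_flip_paths} gives cut-walks $P = P_0, P_1, \dots, P_k = Q$ with $P_{i+1}$ obtained from $P_i$ by a chamber flip and $v_0,v_1,v_2$ occurring once in each $P_i$; composing the isomorphisms $O_{P_i}(G) \cong O_{P_{i+1}}(G)$ from the previous step gives $O_P(G) \cong O_Q(G)$, and since $P_0 = P$ and $P_k = Q$ are genuine cut-paths this is exactly Theorem~\ref{thm:indep_of_paths}. The main obstacle is the middle step. The chamber flip is a purely local move on the cut, but $O_W(G)$ is assembled by gluing many cut-copies of $O$ together, so one has to verify carefully that re-cutting and re-gluing around \emph{every} copy of the flipped chamber --- including the copies adjacent to vertices of $O_W(G)$ of type $0$, $1$ or $2$, where several copies of the patch meet --- reassembles to exactly the same embedded graph, with the right labels and rotation systems. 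The orientation data (each face of $D_{G,W}$ receives a copy of $O_W$ or of its mirror image) and the handling of non-simple intermediate walks are where the care is needed; once the local picture near a flipped chamber is pinned down, the global statement follows formally.
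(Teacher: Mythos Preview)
Your strategy is natural but genuinely different from the paper's, and the difference matters for where the technical burden falls. The paper never defines $O_W(G)$ for non-simple walks $W$. Instead it stays inside the fixed graph $O_P(G)$: for each edge $e$ of $D_G$ it lifts the sequence of chamber flips from $P_{v_j,v_0}$ to $Q_{v_j,v_0}$ to a sequence of paths $P^e=P^e_0,\dots,P^e_k=Q^e$ in $O_P(G)$, proves that the union of the $Q^e$ is an embedded subgraph of $O_P(G)$ isomorphic (as an embedded graph) to $D_{G,Q}$, and then uses an integer-valued ``winding'' function $\alpha_i$ on chambers to show that each face of this subgraph has internal component isomorphic to $O_Q$. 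So $O_Q(G)$ is literally found sitting inside $O_P(G)$.

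Your route requires making $O_W(G)$ well-defined for walks, and this is more delicate than you indicate. The internal-component construction in the paper (Definition~\ref{def:kembed}) applies to a \emph{face of a subgraph}; a non-simple walk is not a subgraph with a single face, so ``cutting $O$ along $W$'' must be defined from scratch. Once you do that, you need the barycentric-subdivision property to survive: when $W$ passes repeatedly through a type-$1$ vertex $v\ne v_1$ of degree~$4$, the copies of $v$ on the boundary of $O_W$ acquire various degrees, and you must check that after gluing along $D_{G,W}$ every such copy again has degree~$4$. This is not automatic and is exactly the kind of bookkeeping the paper's approach sidesteps. Your sketch of the flip-invariance step is plausible but also omits the case analysis (e.g.\ flips near $v_0$ and the identification of $v_{0,L}$ with $v_{0,R}$). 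Finally, a small correction: for lopsp-operations no mirror images are used---every double chamber receives an \emph{oriented} copy of $O_P$; the orientation only determines which of $v_{0,L},v_{0,R}$ goes where.
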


\begin{proof}
  
  Let $e$ be an edge of $D_G$, and let $P^e$ be the result of replacing $e$ with $P_{v_j,v_0}$ in ${O_P}(G)$ with $j=1$ if $e$ is of type 2 and $j=2$ if $e$ is of type 1.  By
  Lemma~\ref{lem:chamber_flip_whole_paths} there is a series of paths $P_{v_j,v_0}=P_0,\ldots, P_k=Q_{v_j,v_0}$ from $v_j$ to $v_0$ in $O$, so that one path is obtained from the
  previous one by a chamber flip and none of $v_0,v_1,v_2$ occur as interior points of any of the paths. We define a sequence of paths $P^e=P^e_0,\ldots, P^e_k$ in ${O_P}(G)$ with
  $\pi_P(P^e_i)=P_i$ for $0\le i \le k$. The chamber flips in $O$ on the paths $P_i$ replace subpaths of one or two edges. In case of one edge it is clear that a corresponding 
  chamber flip can be performed on $P^e_i$ in ${O_P}(G)$. In case of two edges, we have to prove that the two corresponding edges in $P^e_i$ are also contained in the same chamber. A
  vertex $v\not\in \{v_0,v_1,v_2\}$ on $P$ not only has the same degree as a vertex in $\invpi(v)$, but seen as a vertex in $P$, respectively $\invpi(P)$, it also has equally many
  edges on the left, resp.\ right side of $P$ or $\invpi(P)$ (with the canonical choice of direction for parts of $\invpi(P)$). As none of the intermediate paths $P_i$ contain
  $v_0,v_1$, or $v_2$ at an intermediate point, this property is preserved by the chamber flips, so that if there is no edge in the angle between two consecutive edges in some
  $P_i$ and a chamber flip is performed, there are also no edges in the corresponding path $P^e_i$ in ${O_P}(G)$ and the chamber flip is also possible there. Thus a sequence of
  paths $P^e=P^e_0,\ldots, P^e_k$ in ${O_P}(G)$ with $\pi_P(P^e_i)=P_i$ exists for $0\le i \le k$ and $\pi_P(P^e_k)=Q_{v_j,v_0}$. We denote $P^e_k$ as $Q^e$. Note that $Q^e$ is
  isomorphic to $Q_{v_j,v_0}$, not to $Q$. The index $j$ is determined by the type of the edge $e$. Let $O_P(G)|_Q$ be the graph consisting of all the vertices and edges of
  $O_P(G)$ contained in $Q^e$ for some edge $e$.  With the rotational orders induced by ${O_P}(G)$ we have that $O_P(G)|_Q$ is an embedded subgraph of ${O_P}(G)$.
  
  {\bf Claim 1:} $O_P(G)|_Q$ is a subgraph of ${O_P}(G)$ that is isomorphic (as a non-embedded graph) to $D_{G,Q}$.

  {\em Proof of Claim 1:} Two paths $Q^e$ and $Q^{e'}$ can only intersect in their endpoints, as every internal vertex $v$ has $\pi_P(v) \not\in \{v_0,v_1,v_2\}$ which implies that $v$ has only
  two incident edges with an image in $Q$.  This implies that two intersecting paths are either disjoint except possibly for their endpoints or identical.
  As the first edges at $v_1$ or $v_2$ of two different
  paths are different -- they come from different edges of $D_{G,P}$ and were all transformed in the same way -- the paths must be disjoint.  It follows that $O_P(G)|_Q$ is isomorphic to $D_{G,Q}$ as an abstract graph. It is not immediately clear that they are also
  isomorphic as embedded graphs. \hfill $\square$

        Let $m$ denote the total number of chamber flips necessary to transform first $P_{v_1,v_0}$ to $Q_{v_1,v_0}$ and then $P_{v_2,v_0}$ to $Q_{v_2,v_0}$.
	With every face (that is: double chamber) $D$ of $D_G$ and $0\le i \le m$
        we can now associate a closed walk $W_i$ that consists of the four paths $P_i^{e_1},P_i^{e_2},P_i^{e_3},P_i^{e_4}$ in ${O_P}(G)$ with
        $e_1,\dots ,e_4$ the four edges of
        the double chamber, in the same order as they appear in the double chamber. We will prove that $W_m$ is a face of $O_P(G)|_Q$, so that
        $O_P(G)|_Q$ is  an embedded subgraph of ${O_P}(G)$ that is isomorphic (as an embedded graph) to $D_{G,Q}$. In fact:

         {\bf Claim 2:} $O_P(G)|_Q$ is a subgraph of ${O_P}(G)$ that is isomorphic as an embedded graph to $D_{G,Q}$ and for each face the internal component is isomorphic to $O_{Q}$.

         {\em Proof of Claim 2:} Let $D$ be a face of $D_G$, $\cal C$ be the set of all chambers in ${O_P}(G)$, and $n$ be the number of chambers in $O$.
         We will define functions $\alpha_i: {\cal C}\rightarrow \mathbb{Z}$ ($0\leq i\leq m$) with the following properties:

         \begin{description}
         \item [(i):] Let $C,C'$ in ${O_P}(G)$ be two different chambers sharing the directed edges $e$ and $e^{-1}$, so that $C$ is on the left of $e$.  For $e'\in \{e,e^{-1}\}$
           we define $n_i(e')$ as the number of times $e'$ occurs in the cyclic walk $W_i$: Then: \\
           $\alpha_i(C)-\alpha_i(C')= n_i(e)-n_i(e^{-1})$.
         \item [(ii):] For every chamber $C$ in $O$: $\sum_{C'\in {\invpi(C)}}\alpha_i(C')=1$
         \end{description}

         As a consequence of (ii) we have $\sum_{C\in {\cal C}}\alpha_i(C)=n$.

         $W_0$ is an internally plane facial walk of $D_{G,P}$ with an internal component that is isomorphic to $O_{P}$. We define $\alpha_0(C)=1$ if $C$ is a chamber on the inside of $W_0$ and
         $\alpha_0(C)=0$ if $C$ is on the outside. As $W_0$ has exactly one copy of each chamber in $O$ inside  we get (ii) for $\alpha_0()$.
         As $\alpha_0()$ only differs for neighbouring chambers if they share an edge of $W_0$ (and then in the way described by (i)) we also get (i).

         \begin{figure}
		\centering
		\resizebox{.7\textwidth}{!}{\usetikzlibrary{decorations.markings}
\usetikzlibrary{arrows.meta}
\begin{tikzpicture}
[scale=0.8, decoration={markings, 
    mark= at position 0.5 with {\arrow{stealth}}}
] 

\tikzset{normal/.style={shape=circle, draw=black, scale=0.4, fill=black}}
\tikzset{noNode/.style={shape=circle, draw=black, scale=0.05, fill=black}}

\node[noNode] (a) at (-2.5,-2.5) {};
\node[normal] (b) at (-1, -1) {};
\node[normal] (c) at (1,1) {};
\node[noNode] (d) at (2.5,2.5) {};
\node[normal] (e) at (1.5,-1) {};

\begin{scope}[very thick] 
    \draw[postaction={decorate}] (a)--(b);
    \draw[postaction={decorate}] (b)--(e);
    \draw[postaction={decorate}] (e)--(c);
    \draw[postaction={decorate}] (c)--(d);
\end{scope}

\begin{scope}[very thick, dashed]
\draw (b) -- (c);
\end{scope}

\node at (0.5,-0.3) {\Large$C$};

\begin{scope}[shift={(10,0)}]

\node[noNode] (a) at (-2.5,-2.5) {};
\node[normal] (b) at (-1, -1) {};
\node[normal] (c) at (1,1) {};
\node[noNode] (d) at (2.5,2.5) {};
\node[normal] (e) at (1.5,-1) {};

\draw[very thick, postaction={decorate}] (a)--(b);
\draw[very thick, postaction={decorate}] (b)--(c);
\draw[very thick, postaction={decorate}] (c)--(d);

\draw[dashed, very thick] (b) --(e) -- (c);
\node at (0.5,-0.3) {\Large$C$};

\end{scope}
\begin{scope}[shift={(0,-6)}]
\node[noNode] (a) at (-2.5,-2.5) {};
\node[normal] (b) at (-1, -1) {};
\node[normal] (c) at (1,1) {};
\node[noNode] (d) at (2.5,2.5) {};
\node[normal] (e) at (1.5,-1) {};

\draw[very thick, postaction={decorate}] (b)--(a);
\draw[very thick, postaction={decorate}] (e)--(b);
\draw[very thick, postaction={decorate}] (c)--(e);
\draw[very thick, postaction={decorate}] (d)--(c);

\draw[very thick, dashed] (b) --(c);

\node at (0.5,-0.3) {\Large$C$};
\end{scope}
\begin{scope}[shift={(10,-6)}]

\node[noNode] (a) at (-2.5,-2.5) {};
\node[normal] (b) at (-1, -1) {};
\node[normal] (c) at (1,1) {};
\node[noNode] (d) at (2.5,2.5) {};
\node[normal] (e) at (1.5,-1) {};

\draw[very thick, postaction={decorate}] (b)--(a);
\draw[very thick, postaction={decorate}] (c)--(b);
\draw[very thick, postaction={decorate}] (d)--(c);

\draw[dashed, very thick] (b) --(e) -- (c);
\node at (0.5,-0.3) {\Large$C$};

\end{scope}

\draw[-{>[scale=2]}] (4,1) -- (6,1) node[pos=0.5, label={[label distance=1mm]:$\alpha_{i+1}(C)=\alpha_i(C) - 1$}] {};
\draw[{<[scale=2]}-] (4,-1) -- (6,-1) node[pos=0.5, label={[label distance=1mm]below:$\alpha_{i+1}(C)=\alpha_i(C) + 1$}] {};

\begin{scope}[shift={(0,-6)}]
\draw[-{>[scale=2]}] (4,1) -- (6,1) node[pos=0.5, label={[label distance=1mm]:$\alpha_{i+1}(C)=\alpha_i(C) + 1$}] {};
\draw[{<[scale=2]}-] (4,-1) -- (6,-1) node[pos=0.5, label={[label distance=1mm]below:$\alpha_{i+1}(C)=\alpha_i(C) - 1$}] {};
\end{scope}

\end{tikzpicture}}
		\caption{\label{fig:alpha}The evolution of $\alpha$ after chamber flips.}
	\end{figure}
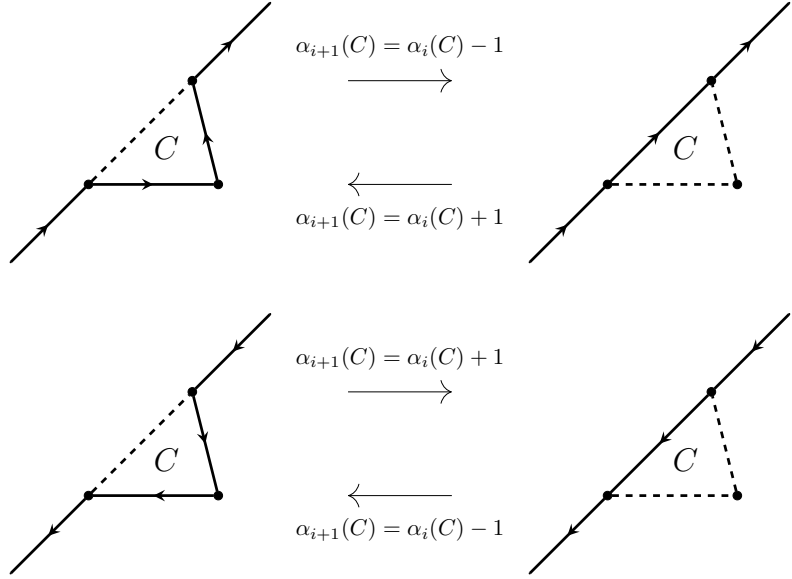	

         For $i>0$ we define $\alpha_i$ inductively. Let $C$ be the chamber of $O$ to which a chamber flip is applied when changing $W_{i-1}$ to $W_i$. These chamber flips occur in
         two places of $W_{i-1}$ as paths that are the result of $i-1$ chamber flips of copies of $P_{v_j,v_0}$ occur twice in the cyclic walk, and in fact in different directions. So two chambers $C^-,C^+$
         with $\pi(C^-)=\pi(C^+)=C$ are involved, $C^-$ on the left of the cyclic walk and $C^+$ on the right. We now define $\alpha_i(C^-)=\alpha_{i-1}(C^-)-1$
         and $\alpha_i(C^+)=\alpha_{i-1}(C^+)+1$. This is illustrated in Figure~\ref{fig:alpha}. As we once
         add one and once subtract one for two chambers with the same image under $\pi()$, (ii) is immediate. Property (i) can be easily checked by looking at
         $\alpha_{i}()$ for $C^-,C^+$ and the neighbouring chambers sharing an edge with them.
         
         As the two start edges of the paths starting at $v_2$ and $v_1$ are always moved in the same direction by the chamber flips, these vertices are in each step $i$ contained
         in a chamber $C$ with $\alpha_i(C)=0$ and a chamber $C'$ with $\alpha_i(C')=1$ (note that these chambers are not involved in any flips but the ones changing the start
         edge). As $W_m$ is simple, except for possibly $v_0$ occurring twice, we can follow $W_m$ from $v_1$ and once from $v_2$ to the copies of $v_0$ to conclude that for
         each edge of $W_m$, the two chambers $C,C'$ containing it have $\alpha_m(C)=0$ and $\alpha_m(C')=1$, so that these are the only values of $\alpha_m()$.  As $Q$ is a path, for any
         two chambers $C_O,C'_O$ in $O$ there is a path between them in the dual of $O$ not crossing edges of $Q$. So for any chamber $C$ with $\alpha_m(C)=1$, we can follow paths in
         the dual of $O_P(G)$ corresponding to the paths from $\pi(C)$ avoiding $Q$ in the dual of $O$ to find $n$ chambers with pairwise different images under $\pi()$ and $\alpha_m(C')=1$ for all these
         chambers.  As $0$ and $1$ are the only values of $\alpha_m()$, these are all the chambers $C$ with $\alpha_m(C)=1$, they form the interior of $W_m$, and they are mapped
         bijectively onto $O$ by $\pi()$. The fact that the interior is isomorphic to $O_{Q}$ can be seen by mapping a chamber $C$ to $\pi(C)$ (identifying the chambers of $O$ and
         $O_Q$) and observing that for all edges $e$ not in $W_m$ the image $\pi(e)$ is not in $Q$, so that we get -- except for the boundary -- a chamber system isomorphism
         between the interior of $W_m$ and $O_Q$.

         As the interior of $W_m$ does not contain any edges mapped to $Q$, we get that $W_m$ is a face of $O_P(G)|_Q$, so that $O_P(G)|_Q$ is isomorphic as an embedded graph to
         $D_{G,Q}$ as two paths $Q^e,Q^{e'}$ form an angle if and only if $P^e,P^{e'}$ form an angle -- which again is the case if and only if $e,e'$ form an angle, proving Claim~2
         and finally the theorem.
\end{proof}

\begin{definition}
  Let $O$ be a lopsp-operation and let $G$ be an embedded graph. Choose any cut-path $P$ in $O$.
  The result $O(G)$ of applying $O$ to $G$ is the embedded graph with barycentric subdivision ${O_P}(G)$. 
\end{definition}

This is well-defined
        by Lemma \ref{lem:glue_lopsp_is_bary}, and by Theorem \ref{thm:indep_of_paths}, the definition is independent of the chosen path. We can also define the map $\pi:=\pi_P$ as
        it is independent of the chosen path.

        \section{The effect of lsp- and lopsp-operations on polyhedrality}\label{sec:fw3}
        
Polyhedral embeddings are simple embedded graphs that are 3-connected and have a face-width of at least three. The face-width (or representativity) of an embedded graph is a measure of
`local planarity'. In some respect embeddings of high face-width have certain properties of plane graphs.  We will define face-width in a combinatorial way, using barycentric
subdivisions. It is not difficult to prove that the definition given here is equivalent to e.g.\ the definition in \cite{moh}.

\begin{definition}\label{def:ckembed}	
	A cycle in an embedded graph $G$ is \emph{contractible}\index{contractible!graph theory} if (as an embedded subgraph of $G$) it has a simple internally plane face.
	
	Let $G$ be an embedded graph. The \emph{face-width}\index{face-width} of $G$, denoted $fw(G)$\index{$fw(G)$}, is the minimal length of a non-contractible cycle in $B_G$,
        divided by two. If $G$ has no non-contractible cycles ($G$ is plane), then we define $fw(G)=\infty$.

        An embedded graph is said to be $ck$-embedded for $k\ge 1$, if it has no cut with fewer than $k$ vertices, and the face-width as well as the minimum face size and the
        minimum degree are at least $k$.

\end{definition}

The condition that neither cuts with fewer than $k$ vertices nor vertices with degree smaller than $k$ may be present instead of just requiring the graph to be $k$-connected is chosen
in order to deal with small boundary cases -- e.g.\ cycles that have as duals graphs with just two vertices.

\begin{lemma}
  A graph is $c3$-embedded if and only if the embedding is polyhedral.
\end{lemma}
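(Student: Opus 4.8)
The plan is to reduce the statement to the characterization --- recalled just before the lemma and attributed to \cite{moh} (Proposition~3.9) --- that an embedding is polyhedral if and only if it is simple, $3$-connected and has face-width at least three. Thus it suffices to prove that $G$ is $c3$-embedded if and only if $G$ is simple, $3$-connected and $fw(G)\ge 3$.

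For the implication from polyhedral to $c3$-embedded, assume $G$ is simple, $3$-connected and $fw(G)\ge 3$. A $3$-connected graph has no cut with fewer than three vertices, and a simple such graph has at least four vertices, so a vertex of degree at most two would have a neighbourhood of size at most two that is a cut; hence the minimum degree is at least three. Since $G$ is simple it has neither loops (which would give a face of size one) nor parallel edges, and since it is $3$-connected it has no bridges (a bridge endpoint would be a cut vertex); therefore no facial walk has length less than three, so the minimum face size is at least three. With $fw(G)\ge 3$ this shows that $G$ is $c3$-embedded.

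For the converse, assume $G$ is $c3$-embedded; then $fw(G)\ge 3$ by hypothesis, and it remains to show that $G$ is simple and $3$-connected. The crucial point is simplicity. Suppose $G$ had a loop or a pair of parallel edges; this gives a closed walk of length one or two in $G$, hence a cycle of length two or four in $B_G$. Since $fw(G)\ge 3$, every cycle of $B_G$ of length less than six is contractible, so this cycle bounds a disc $D$ whose boundary in $G$ consists of the loop (touching a single vertex $v$) or of the two parallel edges (with endpoints $u,v$). Choose such a loop, respectively pair, so that $D$ contains as few edges of $G$ in its interior as possible. An edge in the interior of $D$ with both endpoints on the boundary of $D$ would enclose a strictly smaller disc, contradicting this choice; hence the interior of $D$ either contains no edge of $G$ --- and then no vertex either, so $D$ is a single face of size at most two, contradicting the minimum-face-size condition --- or contains a vertex $w$ of $G$. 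In the latter case the minimum-degree condition at $v$ (respectively at $u$ and $v$) forces the exterior of $D$ to contain a vertex of $G$ as well, or else the region of $G$ outside $D$ would itself be a face of size at most two; but then $\{v\}$ (respectively $\{u,v\}$) separates $w$ from that exterior vertex, a cut with fewer than three vertices --- a contradiction. Hence $G$ is simple. A simple graph of minimum degree at least three has at least four vertices, so the absence of cuts with fewer than three vertices makes $G$ $3$-connected, and together with $fw(G)\ge 3$ we conclude that $G$ is polyhedral.

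The deductions linking minimum degree, minimum face size, connectivity and the absence of loops and multiple edges are routine; the step that needs the most care is the last case analysis, where a loop or a pair of parallel edges in a $c3$-embedded graph has to be shown to cut off either a face that is too small or a genuine small separator, and where both the minimum-degree and the face-width hypotheses of the $c3$ condition are genuinely used.
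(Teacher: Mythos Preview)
Your proof is correct and follows essentially the same route as the paper's. Both directions match: for polyhedral $\Rightarrow$ $c3$ you extract minimum degree from simplicity plus $3$-connectedness and minimum face size from simplicity plus the absence of bridges, exactly as the paper does (though the paper compresses the face-size argument into ``as it is simple''); for $c3$ $\Rightarrow$ polyhedral both proofs argue that a loop or a pair of parallel edges would force either a small face, a small cut, or a short non-contractible cycle in $B_G$, and then deduce $3$-connectedness from simplicity, minimum degree and the absence of small cuts.

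The main difference is one of detail: where the paper writes a single line (``non-facial loops and non-facial 2-cycles would imply either smaller cuts or a smaller face-width''), you unfold this into a minimal-disc argument. One small imprecision worth tightening: when you say the exterior of $D$ ``would itself be a face of size at most two'', this is literally true only if the exterior contains no edges; if it contains further loops or parallel edges you need to iterate (or simply observe that some face in the exterior then has size at most two). The conclusion is unaffected.
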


The reason why nevertheless the term $c3$-embedded is used in this article is that many results are proven for the term {\em $ck$-embedded} for general $k\in \{1,2,3\}$.

\begin{proof}
  
  Let $G$ be $c3$-embedded. Then $G$ is simple as  facial loops and facial 2-cycles are excluded by the restrictions on face sizes and non-facial loops and non-facial 2-cycles would imply either smaller cuts or a smaller face-width.
  $G$ is also $3$-connected, as it has no cuts with fewer than $3$ vertices and it has at least 4 vertices, as it has minimum degree $3$ and is simple. The fact that $fw(G)\ge 3$ is also part of the definition of being $c3$-embedded.

  On the other hand let $G$ be a graph with a polyhedral embedding. Then $G$ has no faces of size smaller than $3$ as it is simple, and no vertices of degree smaller than $3$ as it is $3$-connected.
  The fact that $G$ is $3$-connected also implies that there are no cuts with fewer than $3$ vertices. Finally the fact that $fw(G)\ge 3$ is also part of the definition of a polyhedral embedding.

  \end{proof}

A cycle in an embedded graph $G$ is {\em non-contractible} if its two faces are bridged or if they are both simple and internally non-plane.  The result of applying a chamber flip
to a cycle is either another cycle, or a closed walk consisting of two cycles that share a vertex. If we apply a chamber flip to a non-contractible cycle, the result will be or
contain another non-contractible cycle:

\begin{lemma}\label{lem:non0h_onechamberflip}
	
	Let $c$ be a non-contractible cycle in the barycentric subdivision $B_G$ of an embedded graph $G$ and $C$ a chamber sharing one or two edges with $c$. Then the result of
	applying a chamber flip to $c$ at the chamber $C$ is either a non-contractible cycle or a closed walk containing a non-contractible cycle.
	
\end{lemma}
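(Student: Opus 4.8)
The plan is to argue topologically, using that a chamber flip is nothing but a homotopy pushed across a triangular disc. Write $S$ for the orientable surface into which $G$, and hence $B_G$ (recall $gen(B_G)=gen(G)$), is embedded, so that cycles and closed walks in $B_G$ become loops in $S$. Each chamber $C$ of $B_G$ is a face bounded by a $3$-cycle, so its closure $\overline C$ is a disc, and a chamber flip of $c$ at $C$ (Definition~\ref{def:chamber_flip}) replaces one of the two boundary paths of $\overline C$ occurring in $c$ by the complementary one, which is exactly sliding the corresponding arc of the loop $c$ across $\overline C$. Hence the resulting closed walk $c'$ is freely homotopic to $c$ in $S$. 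The second ingredient I would use is that the combinatorial notion of contractibility of Definition~\ref{def:ckembed} coincides, for a simple cycle in $B_G$, with being null-homotopic in $S$: a simple cycle bounding a simple internally plane face bounds a disc, and conversely a bounding disc can be pushed to a simple internally plane face. This is the equivalence with the definition of \cite{moh} already mentioned after Definition~\ref{def:ckembed}.

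With these two facts the proof becomes short, and no analysis of bridges or of the internal planarity of the two faces of $c$ is needed. As recalled in the text just before the lemma, $c'$ is either a cycle or a closed walk that is the union of two cycles $\gamma_1,\gamma_2$ meeting in a single vertex $z$. In the second case one should check, directly from the description of the flip, that $\gamma_1$ and $\gamma_2$ are genuine simple cycles of length at least three: a flip producing a figure eight is a lengthening flip, for which $C$ shares exactly one edge $xy$ with $c$, so its other two edges $xz$ and $zy$ are not edges of $c$ and no edge of $c'$ is traversed twice; moreover $z$ is not adjacent on $c$ to $x$ or $y$ (otherwise $C$ would share two edges with $c$), so neither constituent cycle degenerates to a bigon. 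Since $c$ is non-contractible it is not null-homotopic, and by free-homotopy invariance neither is $c'$.

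Finally I would conclude case by case. If $c'$ is a simple cycle, then it is a cycle in $B_G$ that is not null-homotopic, hence non-contractible by the equivalence above, and we are done. If $c'=\gamma_1\cup\gamma_2$ with common vertex $z$, then, based at $z$, the loop $c'$ is homotopic to the concatenation $\gamma_1*\gamma_2$, so $[c']=[\gamma_1][\gamma_2]$ in $\pi_1(S,z)$; since $[c']\neq 1$ we get $[\gamma_i]\neq 1$ for some $i$, so $\gamma_i$ is a non-contractible cycle contained in the closed walk $c'$. Note that the genuinely degenerate outcomes of a flip — a walk traversing a single edge twice, or a $3$-cycle equal to a chamber — are automatically excluded here, since such walks are null-homotopic whereas $c'$ is not. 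The part I expect to require the most care is making the two identifications precise (``chamber flip $=$ homotopy across a disc'' and ``combinatorially contractible $=$ null-homotopic'') and verifying in the figure-eight case that $\gamma_1$ and $\gamma_2$ are really cycles, so that the factorization $[c']=[\gamma_1][\gamma_2]$ is legitimate; once this is set up, free-homotopy invariance does all the work, and a purely combinatorial argument tracking which chamber of $B_G$ changes sides is possible but considerably more delicate because of the way bridges and internal components behave when a single chamber is absorbed by one of the faces of $c$.
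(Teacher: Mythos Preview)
Your approach is correct and genuinely different from the paper's. The paper stays entirely within the combinatorial framework: when the flip yields a cycle it checks directly that bridged faces stay bridged and that for simple faces the genera of the two internal components are unchanged; when the flip yields a figure eight it distinguishes the two local pictures at the degree-$4$ vertex (crossing versus nested, see Figure~\ref{fig:oneflip}) and, in the nested case, derives a contradiction from assuming both constituent cycles are contractible by reassembling a simple internally plane face for $c$. Your route via free homotopy is shorter and more conceptual: once one knows that a chamber is a disc and that ``contractible'' in Definition~\ref{def:ckembed} coincides with ``null-homotopic'' for simple cycles, the factorisation $[c']=[\gamma_1][\gamma_2]$ in $\pi_1$ finishes the figure-eight case without any local picture analysis. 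What the paper's argument buys is self-containment: it never leaves the combinatorial setting and does not need the identification with the topological notion, which you invoke but do not prove (the remark after Definition~\ref{def:ckembed} concerns face-width, not this equivalence directly, so you should justify it separately).

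Two small points on your write-up. First, your argument that ``$z$ adjacent on $c$ to $x$ or $y$ forces $C$ to share two edges with $c$'' is not quite right, since $B_G$ can have parallel edges (for instance between a $0$-vertex and a $2$-vertex when the vertex occurs twice in a facial walk); the edge of $c$ from $z$ to $y$ need not be the edge of $C$. Fortunately this does not matter: since $c$ is simple and $z\notin\{x,y\}$, the two closed walks obtained by splitting $c'$ at $z$ automatically visit every vertex at most once and have length at least $2$, with the two edges of a potential $2$-cycle being distinct (one is new from $C$, the other old from $c$). So both $\gamma_i$ are genuine cycles in $B_G$ and your $\pi_1$ argument applies; the attempt to exclude bigons is simply unnecessary. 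Second, you should note that in the two-edge (shortening) case the result is again a simple cycle: the only way it could degenerate is if $c$ were the boundary $3$-cycle of $C$ itself, which is contractible and hence excluded.
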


\begin{proof}
	
	If the result of applying a chamber flip is again a cycle $c'$, checking the outgoing edges of the three vertices one gets that the faces of the result are bridged if the
        faces of $c$ are bridged. If the faces of $c$ are simple, a short calculation gives that the two genera of the faces of $c'$ are the same as those of the faces of $c$.
	
	\begin{figure}
		\centering
		\begin{tikzpicture}
\tikzset{normal/.style={shape=circle, draw=black, scale=0.4, fill=black}}
\tikzset{type2/.style={shape=circle, draw=red, scale=0.5}}
\tikzset{noNode/.style={draw=none}}
\tikzset{0edge/.style={draw=red}}

\clip(-2.5,-2) rectangle (6.5,2.5);

\coordinate (A) at (-1,-0.5);
\coordinate (B) at (-0.5,1);
\coordinate (C) at (0.5,0);
\coordinate (D) at (1,-1);
\coordinate (E) at (1,1);

\fill[black!10!white] (A) -- (B) -- (C) -- cycle;

\node (_) at (-0.3,0.2) {$C$};

\begin{scope}[-{latex}]
\draw[thick, dashed] (B) -- (A);
\draw[blue,thick] (C) -- (A) node[midway, below, draw=none, fill=none,  label = {[label distance =-0.3cm]-10:$e_4$}] {}; 
\draw[red, thick] (B) -- (C) node[midway, , draw=none, fill=none, label = {[label distance =-0.2cm]80:$e_1$}] {};
\draw[red, thick] (C) -- (D) node[midway, , draw=none, fill=none, label = {[label distance =-0cm]0:$e_3$}] {};
\draw[blue, thick] (E) -- (C)  node[midway, , draw=none, fill=none, label = {[label distance =-0cm]-0:$e_2$}] {};
\end{scope}

\draw[red, thick] (B) to[out=160, in=220, looseness=4] (D);
\draw[blue, thick] (A) to[out=160, in=80, looseness=3] (E);

\begin{scope}[shift={(5,0)}]

\coordinate (A) at (-1,-0.5);
\coordinate (B) at (-0.5,1);
\coordinate (C) at (0.5,0);
\coordinate (D) at (1,-1);
\coordinate (E) at (1,1);

\fill[black!10!white] (A) -- (B) -- (C) -- cycle;

\node (_) at (-0.3,0.2) {$C$};

\draw[-{latex},thick, dashed] (B) -- (A);
\draw[-{latex},red,thick] (C) -- (A) node[midway, below, draw=none, fill=none, label = {[label distance =-0.3cm]-10:$e_4$}] {}; 
\draw[-{latex},blue, thick] (B) -- (C) node[midway, , draw=none, fill=none, label = {[label distance =-0.2cm]80:$e_1$}] {};
\draw[-{latex},red, thick] (D) -- (C) node[midway, , draw=none, fill=none, label = {[label distance =-0cm]0:$e_3$}] {};
\draw[-{latex},blue, thick] (C) -- (E)  node[midway, , draw=none, fill=none, label = {[label distance =-0cm]-0:$e_2$}] {};

\draw[red, thick] (A) to[out=190, in=-120, looseness=3] (D);
\draw[blue, thick] (B) to[out=160, in=100, looseness=4] (E);

\draw[draw=none] (B) to[out=160, in=220, looseness=5] (D);

\end{scope}

\end{tikzpicture}
		\caption{\label{fig:oneflip} The two possible ways a closed walk can cross a vertex of degree 4 in the walk after applying a chamber flip to a cycle.  }
	\end{figure}
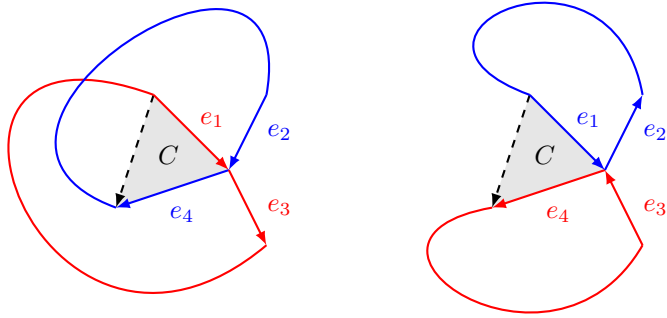
	
	If the result is not a simple cycle, there are two possibilities how the closed walk passes the vertex of degree 4. These two possibilities are displayed in
	Figure~\ref{fig:oneflip}. In the left case, the two cycles cross, which means that in fact both cycles have bridged faces and both are non-contractible. In the second case the
	assumption that the cycle $c_{e_1,e_2}$ containing edges $e_1,e_2$ as well as the cycle $c_{e_3,e_4}$ containing edges $e_3,e_4$ are contractible implies that the faces $f_{e_1,e_2}$ and $f_{e_3,e_4}$ not
	containing the other cycle are internally plane, as otherwise the cycle $c$ would be part of the plane internal component of $c_{e_1,e_2}$ or $c_{e_3,e_4}$, so it would not be bridged and
	have an internal component that is also internally plane. As $f_{e_1,e_2}$, $f_{e_3,e_4}$, and the cycle bounding $C$ are all simple, the face of $c$ containing
	$C$ is also simple and the genus can be computed as $0$ using the genera of the internal components of $f_{e_1,e_2}$, $f_{e_3,e_4}$, and of $C$. So $c$ has a simple internally plane face and
	is contractible -- a contradiction.
\end{proof}

We can now prove the following Lemma, which will be important in the proof of our main result.

\begin{lemma}\label{lem:fw_G_leq_fw_O(G)}
	Let \(G\) be an embedded graph and let $O$ be a lopsp-operation. Then \[fw(G) \leq fw(O(G)).\]
\end{lemma}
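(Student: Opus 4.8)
I would first reduce to a single claim: \emph{if $O_P(G)=B_{O(G)}$ contains a non-contractible cycle of length $\ell$, then $B_G$ contains a closed walk of length at most $\ell$ that is not null-homotopic, hence a non-contractible cycle of length at most $\ell$.} Granting this, the inequality follows: if $O_P(G)$ has a non-contractible cycle, apply the claim to a shortest one to get $\min\{|\gamma|:\gamma\text{ non-contractible in }B_G\}\le\min\{|\delta|:\delta\text{ non-contractible in }O_P(G)\}$, i.e.\ $fw(G)\le fw(O(G))$; if $O_P(G)$ has no non-contractible cycle then $fw(O(G))=\infty$ and there is nothing to prove. The plane case is also covered: if $G$ is plane then $gen(B_G)=gen(G)=0$, so $B_G$ has no non-contractible cycle, so by the contrapositive of the claim $O_P(G)=B_{O(G)}$ has none either, whence $O(G)$ is plane and $fw(O(G))=\infty=fw(G)$. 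Note also that by Theorem~\ref{thm:indep_of_paths} the graph $O(G)$, and with it $fw(O(G))$, does not depend on the cut-path $P$, so in proving the claim I am free to pick $P$ conveniently.

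\textbf{The shadow of a cycle.} Recall that $O_P(G)$ is built by gluing one copy of the double chamber patch $O_P$ into each face (double chamber) of $D_{G,P}$, that $D_{G,P}$ is a subdivision of $D_G\subseteq B_G$, and that $O_P$ is a $2$-connected \emph{plane} graph whose outer boundary is the $4$-gon $v_1,v_{0,L},v_2,v_{0,R}$; hence each glued copy is an embedded closed disk, and $O_P(G)$ lies on the same surface $S$ as $D_{G,P}$, namely the surface of $G$ (so $gen(O(G))=gen(G)$). Fixing once and for all a path $\tau$ inside $O_P$ from $v_1$ to $v_2$, in each copy this realizes the type-$0$ edge of the corresponding double chamber $D$ (the edge joining its type-$1$ and type-$2$ vertices) as a path and splits the copy into two sub-disks. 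Doing this in every copy produces a subgraph $\widehat B_G\subseteq O_P(G)$ that is a subdivision of $B_G$ with its standard embedding, whose complementary faces in $S$ are exactly these sub-disks. Given a non-contractible cycle $\delta$ in $O_P(G)$, the plan is to replace it by a non-contractible closed walk lying in $\widehat B_G$ of length at most $|\delta|$; contracting each subdivided edge back to a single edge then yields the required walk $\bar\delta$ in $B_G$ (after deleting backtracks created at degree-$2$ subdivision vertices, which only shortens the walk).

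\textbf{Transferring non-contractibility.} The reason $\bar\delta$ stays non-contractible is the disk structure. Suppose $\bar\delta$ were contractible in $B_G$; then (as in the characterisation preceding Lemma~\ref{lem:non0h_onechamberflip}) one of its faces in $B_G$ is simple and internally plane, i.e.\ its internal component is a disk built from a set $\mathcal R$ of double chambers of $D_G$. Re-inflating — gluing the $O_P$-copies over the members of $\mathcal R$ along the corresponding pieces of $\widehat B_G$ — glues disks along a disk pattern and therefore produces a disk, whose boundary is precisely the copy of $\delta$ living in $\widehat B_G$. Thus $\delta$ would have a simple internally plane face, contradicting that $\delta$ is non-contractible. (That non-contractibility in $D_G$ or in $\widehat B_G$ forces non-contractibility in all of $B_G$ is immediate, since the type-$0$ edges one adds to pass from $D_G$ to $B_G$ lie strictly inside the double chambers and neither destroy a bridge nor decrease a genus.) An essentially equivalent formulation: $\delta$ and $\bar\delta$ are freely homotopic on $S$ because $\delta$ can be homotoped across each $O_P$-disk onto the relevant arc of $\widehat B_G$, so $\delta$ non-contractible $\Leftrightarrow$ $\bar\delta$ not null-homotopic $\Leftrightarrow$ $\bar\delta$ contains a non-contractible cycle.

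\textbf{The main obstacle.} The delicate step is pushing $\delta$ onto $\widehat B_G$ \emph{without increasing its length}: a non-contractible cycle may wander through the interiors of many $O_P$-copies, and rerouting a chord of such a copy naively around the bounding $4$-gon can cost a factor of two, which would only give $fw(G)\le 2\,fw(O(G))$. Here I would bring in exactly the machinery of Section~\ref{sec:path-invariance}: apply chamber flips inside $O_P(G)$ to $\delta$, which by Lemma~\ref{lem:non0h_onechamberflip} keep the object non-contractible (a cycle, or a closed walk containing a non-contractible cycle), and use them — together with a suitable choice of the cut-path $P$ and of $\tau$, this being the path-invariance mechanism of Theorem~\ref{thm:indep_of_paths} read in reverse — to manoeuvre $\delta$ onto $\widehat B_G$ while controlling the length. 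Establishing that this normalisation can always be done with no net increase in length is, I expect, the crux of the argument; once it is in place, the rest is bookkeeping with Euler characteristics and the planarity of $O_P$.
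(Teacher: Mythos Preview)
Your reduction and the transfer-of-non-contractibility step are correct and match the paper's overall strategy: start from a shortest non-contractible cycle $c$ in $B_{O(G)}$, use chamber flips (Lemma~\ref{lem:non0h_onechamberflip}) to push it onto a subdivision of $B_G$, and then onto $B_G$ itself. You have also correctly located the only real difficulty, namely the length control. But you have not actually proved that step---you say you \emph{expect} the normalisation can be done with no net increase in length, and leave it there. That is the whole content of the lemma; without it you only have the trivial bound $fw(G)\le 2\,fw(O(G))$ that you yourself flag. Chamber flips by themselves do not preserve length, so ``use chamber flips \dots\ while controlling the length'' is not yet an argument.

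The paper does \emph{not} try to make the flips length-preserving. Instead it decouples the homotopy step from the counting step. First it fixes a \emph{minimal} set $M$ of double chambers of $D_G$ that together cover $c$; this set never changes. It then flips $c$ freely (no length control at all) first onto $D_{G,P}$, then onto a cycle $c'_G$ in $B_G$ using only type-$1$ edges, and takes $c'_G$ shortest possible among such cycles supported on $M$. Only now does the length comparison happen, and it is a pure counting argument on $M$: classify each $D\in M$ by how many edges of $c'_G$ it carries ($0$, $1$, or $2$), observe that an \emph{isolated} $2$-double-chamber (one meeting the rest of $M$ only in its two $0$-vertices) must already have carried at least two edges of the original $c$ (since $O$ has no loops), and match each non-isolated $2$-double-chamber to an adjacent $0$-double-chamber---this matching exists by a shortest-cycle argument (Figure~\ref{fig:notshorter}). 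Summing $|g_D^{-1}(D)|$ over $M$ for both $c'_G$ and $c$ then gives $|c'_G|\le |c|$. This bookkeeping with $M$ is the missing idea in your write-up.
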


While this lemma is more or less obvious for an lsp-operations $O_{lsp}$ as in that case $B_{O_{lsp}(G)}$ is a subdivision of $B_{G}$, the situation is more complicated for lopsp-operations.

\begin{proof}

	Let $c$ be a non-contractible cycle in $B_{O(G)}$ of length $2 fw(O(G))$ and $M$ a minimal set of double chambers of $B_G$ containing all edges of $c$ on the boundary or in the interior.
	
	We will construct a sequence of non-contractible cycles $c=c_0,c_1,\dots ,c_n$ also contained in chambers of $M$, so that $c_n$ will only contain edges in $D_{G,P}$ -- with $P$ the cut-path used for applying $O$.
	
	For a closed walk $w$ with all vertices of degree larger than two in $D_{G,P}$ we define a {\em segment} to be a part of $w$ that is a path starting and ending in a vertex of $D_{G,P}$ and with all internal vertices not in $D_{G,P}$. So
	segments are paths from one vertex in the boundary of a double chamber to another splitting the double chamber into two parts. For each segment $s$ we choose the chambers in the
	smallest of these parts as the set of {\em flip chambers} ${\cal F}_s$. If both parts contain the same number of chambers, an arbitrary but fixed part can be chosen. If ${\cal F}_{w}$ denotes the union
	of all ${\cal F}_s$ with $s$ a segment of $w$, then we will have ${\cal F}_{c_0} \supsetneq {\cal F}_{c_1} \supsetneq \dots    \supsetneq {\cal F}_{c_n} =\emptyset$.
	
	Let now some $c_i$, $0\le i$ be given. If ${\cal F}_{c_i}=\emptyset$, we are done. Otherwise let $s$ be a segment of $c_i$ such that $|{\cal F}_s|$ is minimal and let
        $C\in {\cal F}_s$ such that $C$ has a connected intersection with $s$ containing at least one edge. The fact that such a chamber exists can be proven by a distance argument like in the proof
        of Lemma~\ref{lem:move_path_over_face}.
        
         Let $w$ be the closed walk that is a result of a chamber flip at $C$.  Note that in $w$ the modified segment $s$ can
        become a segment $s'$ or two segments $s'_1$,$s'_2$, but in any case ${\cal F}_{s'}={\cal F}_{s}\setminus \{C\}$ or ${\cal F}_{s'_1}\cup {\cal F}_{s'_2}={\cal
          F}_{s}\setminus \{C\}$ and the smaller part of the one or two new segments are uniquely determined.  If the result of applying a chamber flip at $C$ is a simple cycle, we
        take that cycle as $c_{i+1}$ and ${\cal F}_{c_{i+1}}={\cal F}_{c_{i}}\setminus \{C\}$.  If the result is a closed walk, due to our choice of $C$ the vertex of degree 4 is
        on $D_{G,P}$ and the cycle $c_{i+1}$ is chosen as a non-contractible partial cycle that exists due to Lemma~\ref{lem:non0h_onechamberflip}.  Also in this case all segments
        of $c_{i+1}$ are segments of $w$ and therefore ${\cal F}_{c_{i+1}} \subseteq {\cal F}_{c_{i}}\setminus \{C\}$.
	
	The non-contractible cycle $c_n$ is a non-contractible cycle in $D_{G,P}$, which corresponds in a natural way to a cycle $c_G$ in $B_G$ by choosing all the edges of $B_G$ that
	occur as paths in $c_n$. If $c_G$ was contractible -- that is: had a simple internally plane face -- in $B_G$, the corresponding  face of $c_n$ in $O(G)$ would also be contractible. Therefore $c_G$ is not contractible.
	
	The cycle $c_G$ is a cycle containing only edges in double chambers of $M$, so we can again use chamber flips -- this time in $B_G$ and only using chambers contained in
	a double chamber from the set $M$ -- and apply Lemma~\ref{lem:non0h_onechamberflip} to obtain a non-contractible cycle $c'_G$ containing only type 1 edges in chambers of
        $M$. As there is a non-contractible cycle containing only type 1 edges in chambers of $M$, there is also a shortest one -- w.l.o.g.\ assume that $c'_G$ is shortest
	possible.

	\begin{figure}
		\centering
		\begin{tikzpicture}[scale=1.3]
\tikzset{normal/.style={shape=circle, draw=black, scale=0.4}}
\tikzset{on_c/.style={shape=circle, draw=red, scale=0.4, fill=red}}
\tikzset{noNode/.style={draw=none}}
\tikzset{0edge/.style={draw=red}}


\coordinate (A) at (-1,0);
\coordinate (A1) at (-1.5,0.5);
\coordinate (A2) at (-1.5,-0.5);
\coordinate (B) at (0,{tan(60)});
\coordinate (C) at (1,0);
\coordinate (D) at (0,-{tan(60)});
\coordinate (O) at (0,0);

\node[on_c] (a) at (A) {};
\node[on_c] (b) at (B) {};
\node[on_c] (c) at (C) {};
\node[on_c] (d) at (D) {};
\node[normal] (o) at (O) {};

\begin{scope}[red, very thick]
\draw (A1) --(a) -- (b) -- (c) -- (1.5,-0.5);
\draw (-0.5,-2) -- (d) -- (0.5,-2);
\end{scope}

\begin{scope}[ ]
\draw (A2) --(a) --(d)--(c) -- (1.5,0.5);
\draw (a) -- (o) -- (c);
\end{scope}

\begin{scope}[ dotted]
\draw (b) -- (o) -- (d);
\end{scope}

\begin{scope}[shift={(4,0)}]

\coordinate (A) at (-1,0);
\coordinate (A1) at (-1.5,0.5);
\coordinate (A2) at (-1.5,-0.5);
\coordinate (B) at (0,{tan(60)});
\coordinate (C) at (1,0);
\coordinate (D) at (0,-{tan(60)});
\coordinate (O) at (0,0);

\node[on_c] (a) at (A) {};
\node[on_c] (b) at (B) {};
\node[on_c] (c) at (C) {};
\node[on_c] (d) at (D) {};
\node[normal] (o) at (O) {};

\begin{scope}[red, very thick]
\draw (d) --(a) -- (b) -- (c) -- (1.5,-0.5);
\draw (-0.5,-2) -- (d) ;
\end{scope}

\begin{scope}[ ]
\draw (A2) --(a) -- (-1.5,0.5); 
\draw (d)--(c) -- (1.5,0.5);
\draw (a) -- (o) -- (c);
\draw (d) -- (0.5,-2);
\end{scope}

\begin{scope}[ dotted]
\draw (b) -- (o) -- (d);
\end{scope}

\end{scope}

\end{tikzpicture}
		\caption{\label{fig:notshorter} Configurations that can not occur in shortest non-contractible cycles.  }
	\end{figure}
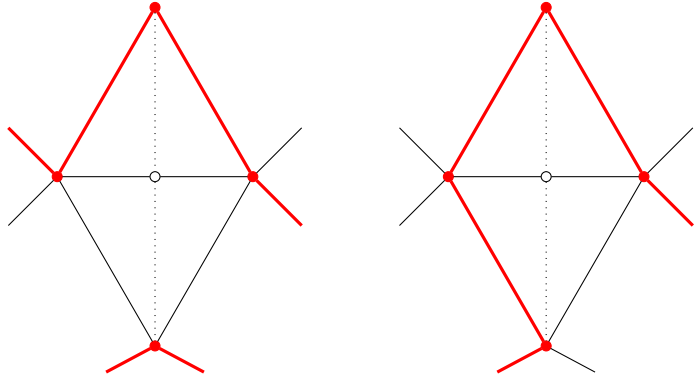

	Let $g_D: E(c'_G)\to M$ be a function mapping each edge of $c'_G$ onto a double chamber containing it, so that for double chambers $D$ containing two edges, both are
	mapped onto $D$.  Note that no edge of $c'_G$ can be contained in two double chambers together with two different edges, so this is well defined. For the length $l(c'_G)$ we have
	$l(c'_G)=\sum_{D\in M}|g_D^{-1}(D)|$.
	
	We call $D\in M$ a $k$-double chamber with $k=|g_D^{-1}(D)|$, so $k\in \{0,1,2\}$. The intersection of a 2-double chamber $D$ with the other double chambers in
        $M$ either consists of just the two 0-vertices, in which case we call it {\em isolated}, or it contains also the 1- or the 2-vertex. If $D'$ shares the 2-vertex $v$ with $D$,
	it can neither contain another edge of $c'_G$ nor share the 2-vertex with another 2-double chamber as $v$ has degree 2 in $c'_G$. If $D'$ shares the 1-vertex $w$ with $D$, of
	course it only shares it with $D$, as the 1-vertex is only contained in these two double chambers, but it also doesn't share the 2-vertex with another 2-double chamber, as
	otherwise due to Lemma~\ref{lem:non0h_onechamberflip} a shorter cycle could be obtained by chamber flips -- see Figure~\ref{fig:notshorter}. So for every 2-chamber $D$ that
	is not isolated there is a 0-chamber $z(D)$ that can uniquely be assigned to $D$.
	
	We will now prove that the length of $c'_G$ is at most $2 fw(O(G))$. Let $g: E(c)\to M$ be a function mapping each edge of $c$ onto a double chamber containing it in the boundary or in the interior. So 
	$2 fw(O(G))=\sum_{D\in M}|g^{-1}(D)|$ and due to the minimal choice of  $M$ we have $|g^{-1}(D)|>0$ for all $D\in M$. As an edge in $B_{O(G)}$ between the two 0-vertices of a double chamber
	would correspond to  a loop in $O$, such edges do not exist and for each isolated 2-chamber $D$ we have $|g^{-1}(D)|\ge 2$. With ${\cal D}_{c,1}$ the set of $1$-double chambers,   ${\cal D}_{c,2}$ the set of isolated
	2-double chambers and ${\cal D}'_{c,2}$ the set of not isolated 2-double chambers we have:
	
	\begin{align*}
		2 fw(G) & \le l(c'_G) = \sum_{D\in M}|g_D^{-1}(D)|   \\
		&  = \sum_{D\in {\cal D}_{c,1}}|g_D^{-1}(D)| + \sum_{D\in {\cal D}_{c,2}}|g_D^{-1}(D)| + \sum_{D\in {\cal D}'_{c,2}}(|g_D^{-1}(D)|+|g_D^{-1}(z(D))|)\\
	\end{align*}
	
	As the sets ${\cal D}_{c,1}$, ${\cal D}_{c,2}$, ${\cal D}'_{c,2}$, and $\{z(D) | D \in {\cal D}'_{c,2}\}$ are disjoint, and as we also have \\ $|g^{-1}(D)| \ge 1$ for all $D$, and $|g^{-1}(D)|\ge 2$ for all $D\in {\cal D}_{c,2}$, we get

	\begin{align*}
		2 fw(G) & \le \sum_{D\in {\cal D}_{c,1}}|g^{-1}(D)| + \sum_{D\in {\cal D}_{c,2}}|g^{-1}(D)| + \sum_{D\in {\cal D}'_{c,2}}(|g^{-1}(D)|+|g^{-1}(z(D))|)  \\
		& \le \sum_{D\in M}|g^{-1}(D)| = 2 fw(O(G))\\
	\end{align*}
	
	which completes the proof.

\end{proof}

We have seen in Section~\ref{sec:delaney-dress} that lopsp-operations are closely related to tilings. We use the connectivity of the associated tiling of a lopsp-operation to
define a property that combines connectivity and the non-existence of small faces that would induce small cuts in a trivial way if e.g.\ the operation {\em dual} is applied.
Theorem~\ref{thm:main} will provide an equivalent characterisation which does not use the associated tiling.

\begin{definition}\label{def:lopsp_connectivity}
  For $k\in \{1,2,3\}$ an lsp-operation resp.\ lopsp-operation $O$ is a \emph{$ck$-lsp-operation}, resp.\  \emph{$ck$-lopsp-operation} if the associated tiling $T_O$ is $k$-connected and all faces have size at least $k$.
 \end{definition}

The following lemma characterises $c2$- and
$c3$-embedded graphs by a condition based on the chamber system. A 4-cycle in a barycentric subdivision is called {\em trivial} if it has a face without a vertex or only a single type-1
vertex in it.

\begin{lemma}\label{lem:conn_characterisation}
	Let \(G\) be an embedded graph. 
	\begin{enumerate}[(i)]
	\item $G$ is $c2$-embedded if and only if $B_G$ has no cycles of length 2.
          \item $G$ is $c3$-embedded if and only if $G$ is $c2$-embedded, and $B_G$ has no nontrivial cycles of length 4
	\end{enumerate}
\end{lemma}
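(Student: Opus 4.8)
The plan is to translate each clause defining $ck$-embeddedness into a statement about short cycles of the chamber system $B_G$, using that $B_G$ is tripartite with the three classes being the vertex types, that no $i$-edge joins two $i$-vertices, and that every triangular face of $B_G$ is a chamber. The main preliminary fact I would establish is that, as soon as $B_G$ has no $2$-cycle, every $3$-cycle of $B_G$ is a chamber boundary and hence contractible: a $3$-cycle consists of a $0$-vertex $v$, a $1$-vertex $e$ and a $2$-vertex $f$ with $v$ an endpoint of $e$, $e$ on the facial walk of $f$ and a corner of $f$ at $v$, and once $v$, $e$ and $f$ each occur only once in the relevant incidences -- which is exactly what the absence of $2$-cycles gives -- this forces the corner of $f$ at $v$ to use $e$, so $(v,e,f)$ is a chamber. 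In particular, if $B_G$ has no $2$-cycle then it has no non-contractible cycle of length at most $3$, so $fw(G)\ge 2$; this settles the face-width clause of (i) for free.

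For (i): if $B_G$ has no $2$-cycle I would check the remaining clauses of $c2$-embeddedness contrapositively -- a vertex of degree $1$ makes its unique edge occur (in both orientations) on its incident facial walk, giving a $2$-cycle on $0$-edges; a face of size $1$ is bounded by a loop, giving a $2$-cycle on $2$-edges; and a cut vertex occurs at at least two distinct corners of some incident face, giving a $2$-cycle on $1$-edges. Conversely, if $G$ is $c2$-embedded, the three possible shapes of a $2$-cycle of $B_G$ -- two $2$-edges, two $1$-edges, two $0$-edges -- correspond respectively to a loop of $G$, a vertex occurring twice on a facial walk, and an edge occurring twice on a facial walk; using that a face is a disc (so that an arc of a facial walk is null-homotopic), a short case analysis -- a loop is facial, separating, or non-contractible; a repeated vertex or edge forces a $1$-cut, a degree-$1$ vertex, or a face of size $1$ -- shows each of these contradicts $c2$-embeddedness.

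For (ii) I work with $G$ already assumed $c2$-embedded, so $B_G$ has no $2$-cycle and all its $3$-cycles are chambers, and I classify the $4$-cycles of $B_G$ by the cyclic sequence of the types of their vertices. A non-contractible $4$-cycle gives $fw(G)\le 2$; conversely, if $fw(G)<3$ then, since a shortest non-contractible cycle of $B_G$ has even length -- this is where the equivalence of $fw$ with Mohar's integer-valued face-width, noted after Definition~\ref{def:ckembed} and proved in \cite{moh}, is used -- and has length exceeding $3$, there is a non-contractible $4$-cycle, which is automatically nontrivial since every trivial $4$-cycle bounds two chambers sharing an edge or the four chambers around an edge and is therefore contractible. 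For a \emph{contractible} nontrivial $4$-cycle $c$ I would examine the triangulated disc $\Delta$ cut off by $c$ on its simple internally plane side: if $\Delta$ has a single interior vertex $x$, then $\Delta$ is a wheel and counting the $B_G$-neighbours of $x$ forces $x$ to be a degree-$2$ vertex of $G$ when $x$ has type $0$ and a face of size $2$ when $x$ has type $2$, while type $1$ makes $c$ trivial; if $\Delta$ has at least two interior vertices, a check case by case on the type pattern (several patterns being always trivial under the $c2$-hypothesis) shows that the closed curve carried by $c$ meets $G$ in at most two vertices, or in one vertex together with the two ends of a single edge it runs along, so $G$ has a cut of size at most $2$. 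In the other direction I would produce a nontrivial $4$-cycle from each possible violation of $c3$: a face of size $2$ yields a $4$-cycle enclosing only the corresponding $2$-vertex, a vertex of degree $2$ a $4$-cycle enclosing only the corresponding $0$-vertex, and a $2$-cut $\{v,w\}$ a $4$-cycle through $v$, a face $f_1$, $w$ and a face $f_2$ with $f_1$ and $f_2$ each containing both $v$ and $w$ on the two sides of the separation (or, when the separation is only realised by a non-contractible curve, a non-contractible $4$-cycle via $fw(G)\le 2$); the face-width clause was handled above.

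The main obstacle I anticipate is the analysis of contractible nontrivial $4$-cycles with several interior vertices, together with the matching construction for $2$-cuts: one has to argue that such a cycle genuinely records a $2$-cut -- producing the two faces, or the face and the edge, that witness the separation -- rather than some harmless local configuration, and one has to keep the higher-genus alternative present throughout, since a vertex or an edge occurring twice in an incidence of $B_G$ need not stem from a small cut but may instead witness low face-width. Concretely, the dichotomy ``small cut \emph{or} short non-contractible cycle of $B_G$'' must be threaded through every type pattern in the case analysis, and one must keep track of which side of a cycle is the simple internally plane one when invoking internal planarity; this bookkeeping is the technical heart of the argument, while the degenerate cases (single interior vertex, or direct constructions from a digon or a degree-$2$ vertex) are routine.
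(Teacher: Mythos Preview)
Your plan is correct and covers the same ground as the paper's proof, but the organisation differs. For the direction ``short cycle $\Rightarrow$ not $ck$-embedded'' the paper does not classify cycles by the type pattern of their vertices; instead it first disposes of the non-contractible case (giving low face-width directly) and then passes to an \emph{innermost} contractible $2$-cycle, resp.\ nontrivial $4$-cycle, and analyses its interior according to whether it contains a $0$-vertex: no interior $0$-vertex forces a small face, exactly one interior $0$-vertex forces a small degree, and several interior $0$-vertices force a small cut. Your type-pattern classification reaches the same conclusions but trades the single innermost reduction for a longer list of cases; the payoff is that each case has a transparent geometric reading (pattern $(0,2,0,2)$ is the classical two-vertex/two-face separating curve, etc.), whereas the innermost trick keeps the case list short at the price of a slightly more ad hoc interior argument. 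Your preliminary observation that, once $2$-cycles are excluded, every $3$-cycle of $B_G$ bounds a chamber is not used by the paper, and your explicit appeal to the even length of shortest non-contractible cycles in $B_G$ is only implicit there (the paper simply lists ``non-contractible $4$-cycle'' as the face-width failure mode). Finally, your remark that several type patterns --- for instance $(0,1,2,1)$, where the single $0$-vertex is forced to sit between the two $1$-vertices in the cyclic order around the $2$-vertex --- collapse to trivial $4$-cycles under the $c2$-hypothesis is exactly what keeps your case analysis finite, and plays the role of the paper's observation that an innermost $4$-cycle with no interior $0$-vertex must enclose a $2$-vertex of degree at most four.
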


\begin{proof}
	
  {\bf (i)} Let $G$ be an embedded graph and assume that $G$ is not $c2$-embedded.
  There are four possible reasons for not being $c2$-embedded: the existence of a cutvertex, the existence
  of a facial loop (a face of size 1), the existence of a vertex of degree 1, or the existence of a non-contractible 2-cycle in $B_G$.
  The last three immediately imply the existence of a 2-cycle, so assume that $G$ has a
  cutvertex $v$.  If there is a loop in $G$, we also have a 2-cycle, so assume that there are no loops.
  Then $v$ has neighbours in different components, so it has also neighbours $x,y$ in different components, so that $y$ follows $x$ in
  the rotational order around $v$.  The facial walk $(x,v),(v,y),(y,w_1),\dots ,(w_k,x)$ of the face $f$ containing this
  angle must contain $v$ also as one of the $w_i$ as otherwise there would be a path from $x$ to $y$ in $G\setminus\{v\}$. This implies that in the barycentric subdivision there
  are 2 edges between $v$ and the central vertex of $f$ -- a 2-cycle.

  For the other direction assume that there is a 2-cycle $c$. If $c$ is non-contractible, then $fw(G)=1$, so $G$ is not $c2$-embedded and we are done. If $c$ is contractible, then
  assume w.l.o.g.\ that $c$ is {\em innermost}, that is: that it contains no 2-cycle in its interior. If it has a face without a 0-vertex, then it is either a 2-cycle between a 0-
  and a 1-vertex with just a 2-vertex in the interior (which is a facial loop, so $G$ is not $c2$-embedded), or a 2-cycle between a 0- and a 2-vertex with a 1-vertex in the
  interior. As this 1-vertex must have two edges between the 0-vertex in $c$, $c$ was not innermost. If $c$ has no face without a 0-vertex, then every path in $G$ between the
  0-vertices in the two faces using just 2-edges must pass through a vertex of $c$, which contains at most one 0-vertex and one 1-vertex, and no path can use just the 1-vertex on
  $c$ in case there is also a $0$-vertex. So we have just one path crossing $c$, which implies the existence of a cutvertex or a vertex with degree 1, so that again $G$ is not
  $c2$-embedded.

  {\bf (ii)} Let $G$ be an embedded graph and assume that $G$ is not $c3$-embedded. If it also not $c2$-embedded we are done, so assume that $G$ is $c2$-embedded.
  There are four possible reasons for not being $c3$-embedded: the existence of a cutset $\{x,y\}$ of size two, the
  existence of a face of size two, the existence of a vertex with degree 2, or the existence of a non-contractible 4-cycle in $B_G$. Again the last three, as well as double edges forming a non-facial 2-cycle in $G$,
  immediately give a nontrivial 4-cycle in $B_G$, so assume that there are no double edges or loops, but a 2-cut $\{x,y\}$.

  Then both, $x$ and $y$, have neighbours in different components. Let $u\not= y$ be a neighbour of $x$, so that the previous vertex in the rotational order around $x$ is not in
  the same component of $G\setminus \{x,y\}$ as $u$ and let $v$ be the last vertex in the rotational order around $x$ such that all intermediate vertices and $v$ are in the same
  component as $u$. Note that $u=v$ is possible. Then the edges $(x,u)$ and $(v,x)$ belong to different faces $f_1,f_2$ (otherwise we had a cycle of length 2 in $B_G$) and both
  faces also contain $y$ as otherwise also the next, resp. previous neighbour of $x$ would belong to the same component as $u$ and $v$. So $x,f_1,y,f_2$ is a nontrivial cycle of
  length 4.

        For the other direction assume that $G$ is not $c2$-embedded or that $B_G$ has nontrivial cycles of length 4. If it is not $c2$-embedded, we are done, so assume that $c$
        is a nontrivial 4-cycle. If $c$ is non-contractible, we have $fw(G)\le 2$, so $G$ is not $c3$-embedded and we are done. So assume that $c$ is a contractible innermost
        nontrivial 4-cycle. If it has a face without a 0-vertex, it must contain a 2-vertex (as otherwise it would have to contain at least 2 adjacent 1-vertices), but this vertex would
        have to have degree at most 4 (as it can be adjacent to at most two 0-vertices on $c$), which would be a 2-face, so $G$ is not $c2$-embedded. The last case would be that there are 0-vertices
        in both faces. If in one face there is just one 0-vertex, it has degree at most 4 in $B_G$, so degree 2 in $G$, so that $G$ is not $c3$-embedded. Otherwise there are at least 4 vertices and at most two
        paths from vertices on the inside to vertices on the outside crossing $c$, so there is a 1- or 2-cut in $G$, so that again $G$ is not $c3$-embedded.

\end{proof}

Lemma \ref{lem:conn_characterisation} is very useful to determine whether an embedded graph is $c2$- or $c3$-embedded. It will often be used in the following
lemmas and theorems. The main theorem of this last section is Theorem \ref{thm:main}, which shows the equivalence of different definitions of $ck$-lopsp-operations and states that
when applying $ck$-lopsp-operations with $k\in \{1,2,3\}$ to certain embedded graphs, the result is $ck$-embedded. The most difficult
part of its proof is captured in Theorem \ref{thm:cycle_butterfly_2} for $c2$-embeddings and Theorem \ref{thm:cycle_butterfly_3} for $c3$-embeddings.

\begin{lemma}\label{lem:minpath_2}
	Let $O$ be a lopsp-operation with a cut-path $P$ of
	minimal length.
        \begin{description}
          \item[(i)]
        If the vertices of an edge in $O$ are in the same $P_{v_0,v_i}$ for an $i\in\{1,2\}$, then the edge or a parallel edge is also in $P_{v_0,v_i}$.

      \item[(ii)]
        If the vertices of an edge in $O_P$ are in different copies of $P_{v_0,v_1}$, then there is a nontrivial 4-cycle in two copies of $O_P$ sharing their copies of $P_{v_0,v_1}$.
	  
          \end{description}
\end{lemma}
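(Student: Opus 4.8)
Suppose $e$ is an edge of $O$ with both endpoints $x,y$ lying on $P_{v_0,v_i}$. Since $O$ has no loops, $x\neq y$, so there is a well-defined subpath $S$ of $P_{v_0,v_i}$ between $x$ and $y$. If $S$ had length at least $2$, I would replace $S$ by the single edge $e$: in the cut-path $P=P_{v_1,v_0}\cdot P_{v_0,v_2}$ this modification touches only the half $P_{v_1,v_0}$ or only the half $P_{v_0,v_2}$, and the result $P'$ is still a simple path from $v_1$ to $v_2$ through $v_0$ whose two halves use only vertices of the original halves and hence still meet only in $v_0$. Thus $P'$ is again a cut-path, and $|P'|=|P|-|S|+1<|P|$, contradicting the minimality of $P$. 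Hence $S$ has length exactly $1$, i.e.\ $x$ and $y$ are consecutive on $P_{v_0,v_i}$; the unique path-edge of $P_{v_0,v_i}$ joining them is then either $e$ itself or a parallel edge of $e$, which is the claim.

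\textbf{Plan for (ii): setting up a $2$-cycle in $O$.} Let $f$ be an edge of $O_P$ whose endpoints lie on two different copies of $P_{v_0,v_1}$. Projecting with $\pi_P$ gives an edge $\bar f=\pi_P(f)$ of $O$ whose two endpoints $\bar a,\bar b$ both lie on $P_{v_0,v_1}$. By part (i), $\bar a$ and $\bar b$ are consecutive on $P_{v_0,v_1}$ and the path-edge $g$ between them equals $\bar f$ or is parallel to it. The first possibility is impossible here: an edge on $P_{v_0,v_1}$ is split into two copies in $O_P$, and each of these two copies has both endpoints on the \emph{same} copy of $P_{v_0,v_1}$, contradicting the hypothesis on $f$. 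Hence $\bar f\neq g$, so $g\cup\bar f$ is a $2$-cycle $Z$ of $O$. Because $O$ is a plane triangulation, $Z$ bounds two discs in $O$, each of which contains at least one vertex in its interior (otherwise $O$ would have a face of size $2$). I then analyse the position of $Z$: tracing the two sides of the edge $g$ through the cutting that produces $O_P$ shows that, if $v_0$ and $v_1$ were on the same side of $Z$, then $\bar f$ would attach to copies of $\bar a$ and $\bar b$ lying on the same copy of $P_{v_0,v_1}$; so in our situation $Z$ separates the two ends $v_0$ and $v_1$ of $P_{v_0,v_1}$, and one disc of $Z$ carries the subpath of $P$ running from $\bar b$ through $v_0$ to $v_2$.

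\textbf{Plan for (ii): producing the nontrivial $4$-cycle.} Now consider two copies of $O_P$ glued along one copy of $P_{v_0,v_1}$ — exactly the configuration occurring around a type-$2$ edge of $D_{G,P}$ in any application of the operation, where a copy of $P_{v_0,v_1}$ is shared by the patches filling the two incident double chambers. In this glued structure the $2$-cycle $Z$ ``unfolds'': the path-edge $g$ survives as an edge of the shared copy of $P_{v_0,v_1}$, while $\bar f$ occurs once in each of the two $O_P$-copies, and together with the appropriate short connecting piece these edges close up into a closed walk of length $4$. I would then verify that this walk is a genuine $4$-cycle (its four vertices are distinct, using that $\bar a\neq\bar b$ and that the two $O_P$-copies meet only along the shared copy of $P_{v_0,v_1}$) and that it is \emph{nontrivial}: each of its two faces inherits the interior of one of the two discs bounded by $Z$ in $O$, which is nonempty; and the degenerate case in which such a face contains only a single type-$1$ vertex $w$ is excluded, since a type-$1$ vertex has degree $4$ and a triangulated disc bounded by the two parallel edges $g,\bar f$ with $w$ as its only interior vertex would force more than two $\bar a\bar b$-edges — a contradiction. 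The main obstacle is the bookkeeping in this last step: one must keep track of \emph{which} copies of the vertices $\bar a$, $\bar b$ (and of $v_0$) the two occurrences of $\bar f$ and the shared copy of $g$ run between, and this depends on the orientations fixed when the two $O_P$-copies are glued into neighbouring double chambers; once these identifications are made precise, the closing of the $4$-cycle and its nontriviality follow from the structure of $Z$ established above.
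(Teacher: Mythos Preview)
Your argument for (i) matches the paper's.

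For (ii) your strategy is close to the paper's, but the nontriviality argument has a genuine gap. You assert that ``a type-$1$ vertex has degree $4$'' and derive a contradiction from that. However, Definition~\ref{def:lopsp} only forces $\deg(v)=4$ for type-$1$ vertices $v\notin\{v_0,v_1,v_2\}$; if $t(v_1)=1$ then $\deg(v_1)=2$. And the single interior vertex of a would-be trivial face is necessarily $v_1$ itself: writing $P_{v_0,v_1}$ as $t_1,\dots,t_k$ with $t_k=v_1$, the face of the $4$-cycle on the $v_1$-side contains the path vertices $t_{i+2}^L,\dots,v_1,\dots,t_{i+2}^R$ together with two copies of every non-path interior vertex of your disc, so it has exactly one vertex only when $i=k-2$, and that vertex is $v_1$. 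With $\deg(v_1)=2$ your count gives exactly two $\bar a\bar b$-edges, not more, so no contradiction arises. (Even for degree $4$, the phrase ``more than two $\bar a\bar b$-edges'' refers to the $2$-cycle $Z$ in $O$, not to the $4$-cycle in the glued patch; you would still need to say what that contradicts.)

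The paper closes this gap by reusing minimality rather than degrees: it writes the $4$-cycle explicitly as $t_i,t'_{i+1},t'_i,t_{i+1}$ (two copies of $f$ and two path-edges) and observes that if it were trivial then $v_1$ would be adjacent in $O$ to $t_i$, so the subpath $t_i,\dots,v_1$ of $P$ could be replaced by that single edge, contradicting the minimality of $P$. This also avoids the detour through the $2$-cycle $Z$. A small side remark: two double chambers of $D_G$ sharing their type-$1$ vertex share \emph{both} copies of $P_{v_0,v_1}$, not just one, which is exactly what makes the second copy of $f$ available for the $4$-cycle.
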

\begin{proof}
	(i): Assume that the vertices $v$ and $w$ of the edge $e$ in $O$ are in $P_{v_0,v_i}$ for an $i\in\{1,2\}$ and that $e$ is not in $P_{v_0,v_i}$. Replacing the
        subpath of $P_{v_0,v_i}$ between $v$ and $w$ by $e$ we get a different cut-path $P'$ in $O$. As the path that is replaced has at least one edge, $P'$ is at most as long as
        $P$. However, $P$ has minimal length, so $P'$ must have the same length as $P$. It follows that there is also an edge $e_P$ between $v$ and $w$ in $P$.
        
        (ii) If $P_{v_0,v_1}$ is $v_0=t_1,\dots ,t_k=v_1$ and we denote one copy with $t_1,\dots ,t_k$ and the other with $t'_1,\dots ,t'_k$, then -- again due to minimality and as
        $O$ has no loops -- such an edge connects w.l.o.g.\ $t_i$ with $t'_{i+1}$ (or the other way around) for some $1\le i <(k-1)$. Considering two copies of $O_P$ sharing the
        copies of $P_{v_0,v_1}$, this gives a 4-cycle $c=t_i,t'_{i+1},t'_i,t_{i+1}$ with $v_1$ in the interior. If $c$ was trivial, then $v_1$ would be a 1-vertex adjacent to all 4
        vertices on $c$ -- also $t_i$ and $t'_i$ -- which contradicts the minimality of $P$.
 \end{proof}

\begin{theorem}\label{thm:cycle_butterfly_2}
  Let $G$ be a $c2$-embedded graph and $O$ a lopsp-operation. Then $O(G)$ is $c2$-embedded if and only if for each cut path $P$ in $O$ we have that there is no 2-cycle in $O_P$.
\end{theorem}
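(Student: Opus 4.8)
The plan is to push both sides of the equivalence into the chamber system. By Lemma~\ref{lem:glue_lopsp_is_bary} and the definition of $O(G)$ we have $O_P(G)=B_{O(G)}$, and by Theorem~\ref{thm:indep_of_paths} this graph is independent of the chosen cut-path $P$; so by Lemma~\ref{lem:conn_characterisation}(i), applied to $O(G)$, the embedding $O(G)$ is $c2$-embedded if and only if $O_P(G)$ contains no $2$-cycle. Thus it suffices to show: $O_P(G)$ contains a $2$-cycle for one (equivalently every) cut-path $P$ \emph{if and only if} there is a cut-path $Q$ for which $O_Q$ contains a $2$-cycle.

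For the direction ``$O_Q$ has a $2$-cycle $\Rightarrow$ $O(G)$ is not $c2$-embedded'' I would exhibit $O_Q$ as a subgraph of $O_Q(G)$. Because $G$ is $c2$-embedded it is in particular loopless, so the four edges of any double chamber of $D_G$ are pairwise distinct and hence the four path-copies replacing them in $D_{G,Q}$ are distinct; consequently gluing a single copy of $O_Q$ into one double chamber identifies no two vertices of $O_Q$, i.e.\ that copy is a faithful subgraph of $O_Q(G)=B_{O(G)}$. A $2$-cycle of $O_Q$ then yields a $2$-cycle of $B_{O(G)}$, so $O(G)$ is not $c2$-embedded.

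For the converse I would take a $2$-cycle $\{e_1,e_2\}$ in $O_P(G)$ on vertices $a\neq b$, with $P$ of minimal length, and produce a cut-path $Q$ with a $2$-cycle in $O_Q$. Applying $\pi_P$, both $e_1$ and $e_2$ become edges of $O$ joining $\pi_P(a)$ and $\pi_P(b)$; since $O$ is a plane triangulation on at least three vertices it is simple, so $\pi_P(a)\neq\pi_P(b)$ and $\pi_P(e_1)=\pi_P(e_2)=:\bar e$. If $e_1$ and $e_2$ lie in a common copy of $O_P$, then (by the faithfulness noted above) $O_P$ itself contains the $2$-cycle and $Q=P$ works. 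Otherwise $e_1\in\kappa_1$, $e_2\in\kappa_2$ for copies of $O_P$ sitting in distinct double chambers $D_1\neq D_2$, and then $a,b\in\kappa_1\cap\kappa_2$, so $D_1$ and $D_2$ share, inside $D_{G,P}$, a boundary part containing two distinct vertices. The heart of the proof is to rule this out. I would first observe that two faces of $D_G$ can meet in two distinct vertices only in one of two ways: either they are the two double chambers at a type-$1$ vertex of $B_G$ and meet in the pair of copies of $P_{v_0,v_1}$ running through that vertex, or they are the two double chambers at a type-$1$ edge of $B_G$ and meet in a single copy of $P_{v_0,v_2}$ -- the degenerate possibilities being excluded because $G$ is loopless, bridgeless and has no faces of size smaller than $2$. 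In both cases $\pi_P(a)$ and $\pi_P(b)$ both lie on $P$, in one common half $P_{v_0,v_i}$, so by Lemma~\ref{lem:minpath_2}(i) (and $O$ simple) $\bar e$ is an edge of $P_{v_0,v_i}$. Then a direct inspection of which double chamber can carry a copy of a fixed edge of $P_{v_0,v_i}$ between prescribed boundary vertices -- using that the path-interior vertices of $D_{G,P}$ created for distinct edges of $D_G$ are distinct, while the type-$0$ and type-$1$ endpoints of two distinct type-$2$ edges of $D_G$ cannot both coincide -- forces $D_1=D_2$, a contradiction. Hence this case does not occur, $O_P$ carries the $2$-cycle, and $Q=P$ works.

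The step I expect to be the real obstacle is the last one of the converse: the exhaustive description of how two double chambers of $D_G$ can share two vertices, and the verification that the resulting identifications of path vertices in $D_{G,P}$ never make two copies of a fixed edge of $P$ parallel. This is precisely where the hypothesis that $G$ is $c2$-embedded (loopless, bridgeless, with no small faces) is used; the remainder is routine bookkeeping with $\pi_P$ and with single-chamber gluings.
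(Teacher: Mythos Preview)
Your overall architecture is sound, and the forward direction is essentially the paper's. The converse, however, has a genuine gap.

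You write: ``since $O$ is a plane triangulation on at least three vertices it is simple, so $\pi_P(a)\neq\pi_P(b)$ and $\pi_P(e_1)=\pi_P(e_2)=:\bar e$''. The simplicity claim is false: a lopsp-operation $O$ is only required to be $2$-connected, and it can (and often does) have parallel edges---look at the operation \emph{gyro} in Figure~\ref{fig:lopsp_gyro}, where several vertex pairs are joined by two edges. This is exactly why Lemma~\ref{lem:minpath_2}(i) is phrased as ``the edge \emph{or a parallel edge} is also in $P_{v_0,v_i}$''. So you cannot conclude that $\pi_P(e_1)=\pi_P(e_2)$, and the whole endgame, which tries to pin down copies of a single edge $\bar e$ and force $D_1=D_2$, collapses.

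The paper's converse does not aim for $D_1=D_2$ at all. With $P$ minimal and $x,y$ on a shared side of $D_1,D_2$, Lemma~\ref{lem:minpath_2}(i) produces an edge $e_0$ \emph{on that side} (hence in both double chambers) parallel to $e_1$ and $e_2$; then $\{e_0\}$ together with whichever of $e_1,e_2$ is different from $e_0$ is already a $2$-cycle inside a single copy of $O_P$. The remaining subcase---$x$ and $y$ on the two \emph{different} copies of $P_{v_0,v_1}$ shared by $D_1$ and $D_2$---is handled not by a bookkeeping argument but by a Jordan-curve contradiction inside the two double chambers (see Figure~\ref{fig:jcontra}): the cycle formed by $e_1$ and the boundary segment from $x$ to $y$ separates the mirrored copy of itself from $e_2$. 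Your ``direct inspection'' paragraph does not supply this, and without it the case analysis is incomplete.
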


\begin{proof}
  It follows from Lemma \ref{lem:fw_G_leq_fw_O(G)} that $fw(O(G))\geq fw(G) \geq 2$, so we can use Lemma~\ref{lem:conn_characterisation} on both,
  $G$ and $O(G)$.

  The fact that $G$ is $c2$-embedded implies that $B_G$ is a simple graph. Together with the fact that $v_{0,L}$ and $v_{0,R}$ cannot be adjacent, as they have the same type,
  this also
  implies that if two different faces of $D_{G,P}$ share at least two adjacent vertices,
  they share exactly one {\em side} of the double chamber containing these two vertices.
  A {\em side} of a double chamber is a path in a face of $D_{G,P}$ between $v_2$ and $v_{0,L}$, between $v_2$ and $v_{0,R}$, or between $v_{0,L}$ and $v_{0,R}$.
	
	If there is a 2-cycle in $O_P$ for a cut-path $P$ then each copy of $O_P$ inserted into $D_{G,P}$ contains a copy of this 2-cycle which is in
        $B_{O(G)}$. Lemma~\ref{lem:conn_characterisation} now implies that $O(G)$ is not $c2$-embedded.
	
	Conversely, assume that $O(G)$ is not $c2$-embedded. Then there is a 2-cycle $c$ in $B_{O(G)}$. Let $x$ and $y$ be the vertices of $c$ and let $e_1$ and $e_2$ be
        its edges. Let $P$ be a cut-path in $O$ of minimal length. If there exists a face of $D_{G,P}$ that contains both edges of $c$ in the interior or on the boundary, then $c$ is a
        cycle in a copy of $O_P$ and we are done. Now assume that $e_1$ and $e_2$ are in different faces $D_1$ and $D_2$.

        As both faces contain $x$ and $y$, they share a whole side. If they are in the same copy of $P_{v_0,v_i}$ for an $i\in\{1,2\}$, then by Lemma~\ref{lem:minpath_2}~(i)
        a parallel edge $e_0$ is also in that copy of $P_{v_0,v_i}$
        and therefore in both faces, so that the copy of $O_P$ containing an edge in $\{e_1,e_2\}\setminus \{e_0\}$ contains a 2-cycle.

        	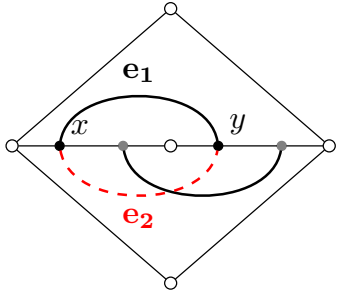
\begin{figure}
		\centering
		\resizebox{.3\textwidth}{!}{\begin{tikzpicture}[scale=3.5]
\def\Pcolor{black}
\def\Qcolor{red}
\def\vsize{0.3}

\tikzset{normal/.style={shape=circle, draw=black, scale=0.4, fill=white}}
\tikzset{P/.style={shape=circle, draw=\Pcolor, scale=\vsize, fill=\Pcolor}}
\tikzset{P'/.style={shape=circle, draw=\Pcolor!50!white, scale=\vsize, fill=\Pcolor!50!white}}

\tikzset{edgeP/.style={draw=\Pcolor}}
\tikzset{edgeP'/.style={draw=\Pcolor!50!white}}
\tikzset{edgeQ/.style={draw=\Qcolor}}
\tikzset{edgeQ'/.style={draw=\Qcolor!50!white, dashed}}

\node[normal] (v1) at (0, 0) {};
\node[normal] (v2) at (0, {sqrt(3)/4}) {};
\node[normal] (v2') at (0, {-sqrt(3)/4}) {};
\node[normal] (v0) at (-0.5, 0) {};
\node[normal] (v0') at (0.5, 0) {};

\node[P, label={[label distance=-0.05cm]80:\textcolor{\Pcolor}{$x$}}] (x) at (-0.35,0) {};
\node[P'] (x') at (0.35,0) {};
\node[P,label={[label distance=-0.05cm]80:\textcolor{\Pcolor}{$y$}}] (y) at (0.15,0) {};
\node[P'] (y') at (-0.15,0) {};

\begin{scope}
\draw (v0) -- (v2) -- (v0') -- (v2') -- (v0) --(x) -- (y') -- (v1) -- (y) -- (x') -- (v0');
\end{scope}

\begin{scope}[thick]
\draw[edgeP] (x) edge[bend left=80, looseness=1] (y);
\draw[edgeP'] (x') edge[bend left=80, looseness=1] (y');
\draw[red, dashed] (x) edge[bend right=80, looseness=1] (y);

\end{scope}

\node[] at (-0.1,0.23) {\textcolor{\Pcolor}{$\mathbf{e_1}$}};
\node[] at (-0.1,-0.23) {\textcolor{red}{$\mathbf{e_2}$}};

\end{tikzpicture}}
		
		\caption{\label{fig:jcontra} An edge between different copies of $P_{v_0,v_1}$.  }
	\end{figure}

        The last possibility is that $x$ and $y$ are in different copies of $P_{v_0,v_1}$ and as $O$ does not contain loops we have  $\pi(x)\not= \pi(y)$.
        Using the cycle in $D_1$ consisting of $e_1$ and the part of $P_{v_{0,L},v_{0,R}}$  between $x$ and $y$ as a Jordan curve, the copy of this cycle in $D_2$ implies 
        that there cannot be an edge between $x$ and $y$ in $D_2$ -- a contradiction (see Figure~\ref{fig:jcontra}).
   \end{proof}

\begin{lemma}\label{lem:around_v2}
	
	Let $G$ be a $c3$-embedded graph and $O$ a lopsp-operation with a cut-path $P$ of minimal length. Let $v$ be a 2-vertex of $D_G$, and consider the subgraph $S_v$ of $B_{O(G)}$
        consisting of all the edges and vertices in the double chambers with 2-vertex $v$. If there is a nontrivial 4-cycle $c$ in $S_v$, then there is a 2-cycle in one of these chambers or
        $c$ is contained in either only one of these double chambers,
        or in two of the double chambers that share one edge.
	
\end{lemma}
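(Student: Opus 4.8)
The plan is to make the geometry of $S_v$ explicit. Because $G$ is $c3$-embedded, the face $f$ of $G$ corresponding to the $2$-vertex $v$ has size $d\geq 3$; the double chambers of $D_G$ with $2$-vertex $v$ are the $d$ faces $D_1,\dots,D_d$ of $D_G$ arranged cyclically around $v$, with consecutive ones $D_{i-1},D_i$ meeting in the type-$1$ edge of $D_G$ joining $v$ to the $i$-th vertex of $f$. After applying $O$ with the minimal cut-path $P$, this type-$1$ edge becomes a copy $s_i$ of $P_{v_0,v_2}$, which I will call a \emph{spoke}, and each $D_i$ is filled by a copy of the double chamber patch $O_P$ (write $\overline{D_i}$ for this copy inside $S_v$), glued to its two neighbours along $s_i$ and $s_{i+1}$. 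Hence $S_v$ is a disc with $v$ in its interior, the $d$ spokes running from $v$ to its boundary, and the $\overline{D_i}$ between consecutive spokes. I will use throughout that every vertex of $S_v$ other than $v$ lies in the closure of at most two of the $D_i$, and, if in two, these are consecutive and the vertex lies on their common spoke; consequently every edge of $c$ lies in the closure of one double chamber, or in two consecutive ones along their common spoke.

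The heart of the argument is the following observation: \emph{no edge of $O_P(G)$ joins a vertex other than $v$ on one of the two spokes bounding some $D_i$ to a vertex other than $v$ on the other spoke of $D_i$.} The copy $\overline{D_i}$ of $O_P$ is obtained by cutting $O$ open along the path $P$, and its two bounding spokes are precisely the two copies of the subpath $P_{v_0,v_2}$ appearing on the boundary of $O_P$. An edge as above would, under $\pi_P$, be an edge of $O$ with both endpoints on $P$ whose two ends lie, after cutting, on different copies of the arc $P$; but cutting a surface along a simple arc cannot make an edge switch sides, since the edge does not cross $P$. (Lemma~\ref{lem:minpath_2}(i) is used in tandem to see, moreover, that an edge of $O_P(G)$ whose two endpoints lie on the \emph{same} spoke joins two consecutive vertices of that spoke.)

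Now $c$, being a cycle in the disc $S_v$, bounds a subdisc $\Delta\subseteq S_v$, and I split into cases according to the position of $v$. \textbf{Case $v$ in the interior of $\Delta$ (and $v\notin c$):} then $c$ meets every spoke, the meeting vertices are pairwise distinct and lie on pairwise distinct spokes, and $c$ avoids the boundary of $S_v$; so $d\leq 4$, all four vertices of $c$ are interior vertices of spokes, and in every possible configuration some edge of $c$ joins interior vertices of two distinct spokes, contradicting the observation. So $c$ does not encircle $v$. This ``winding around $v$'' situation is the only way $c$ could a priori spread over three or more double chambers, and it is precisely the case the observation rules out; showing its impossibility is the main obstacle. \textbf{Case $v\in c$:} write $c=v,a,b,c'$; the edges $va,vc'$ each lie in a single double chamber $\overline{D_i}$, $\overline{D_j}$, and a short analysis of where $b$ can sit (if $b$ lay on a spoke the observation would force $i=j$, which is impossible, and if $b$ lies in the interior of a double chamber its two neighbours pin that chamber down) shows $c\subseteq\overline{D_i}$ when $i=j$ and $c\subseteq\overline{D_i}\cup\overline{D_{i+1}}$ when $i,j$ are adjacent.

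\textbf{Case $v\notin\overline{\Delta}$:} here I count the spokes meeting $\Delta$. A spoke that meets the interior of $\Delta$ contributes at least one subpath whose two ends are among the four vertices of $c$, and distinct spokes use disjoint vertices, so at most two spokes meet $\Delta$ (the degenerate case being one spoke meeting $\Delta$ in two subpaths). If no spoke meets $\Delta$, then $\Delta$ lies in one copy of $O_P$ and $c$ is contained in one double chamber. If exactly one spoke $s_i$ meets $\Delta$ in a single subpath joining two opposite vertices of $c$, that subpath splits $\Delta$ into two halves lying in $\overline{D_{i-1}}$ and $\overline{D_i}$, so $c\subseteq\overline{D_{i-1}}\cup\overline{D_i}$. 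In all remaining sub-cases the subpath(s) join $c$-adjacent vertices, so a subpath of a spoke and a single edge of $c$ share both endpoints; Lemma~\ref{lem:minpath_2}(i) then forces that subpath to be a single edge parallel to that edge of $c$, and the resulting $2$-cycle lies inside a single double chamber. This covers every case and yields the stated trichotomy. The only genuine work lies in the observation above --- an elementary but slightly delicate statement about cutting along an arc --- and in keeping track of which double chambers contain which arcs of $c$; the hypotheses that $c$ is nontrivial and (via Lemma~\ref{lem:fw_G_leq_fw_O(G)}) that $fw(O(G))\geq 3$ are not needed for this lemma itself, but make its later use cleaner.
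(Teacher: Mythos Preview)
Your central ``observation'' --- that no edge of $O_P$ joins a vertex other than $v$ on one spoke of $\overline{D_i}$ to a vertex on the other spoke --- is false, and this is where the argument breaks. The two spokes bounding $\overline{D_i}$ are the two copies of $P_{v_0,v_2}$ on the boundary of the double chamber patch. In the sphere $O$, a path $P$ from $v_1$ to $v_2$ has no global ``sides'': the complement of an arc in a sphere is connected, so the two local sides of $P$ at an interior vertex are joined by going around either endpoint. Consequently an edge $e$ of $O$ with both endpoints on $P_{v_0,v_2}$ can perfectly well leave one endpoint on the left of $P$ and arrive at the other on the right; after cutting, such an edge runs from one copy of $P_{v_0,v_2}$ to the other inside $O_P$. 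Your justification (``the edge does not cross $P$'') would be valid for cutting along a separating closed curve, but not along an arc. Lemma~\ref{lem:minpath_2}(i) only tells you the two endpoints are consecutive on $P$; it does not force them onto the same copy.

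This is exactly the situation the paper confronts head--on: see Figure~\ref{fig:connectingedge}, where the edge $\{t_i,t'_j\}$ is precisely an edge between the two spokes of a single double chamber. The paper's proof therefore cannot (and does not) dismiss the ``$v$ encircled'' case so cheaply. Instead it first disposes of the case where some double chamber has no such spoke--to--spoke subpath of $c$ (a parity/Jordan argument forces $c$ into at most two adjacent chambers or produces a $2$-cycle), and then treats the remaining case --- every double chamber around $v$ is crossed --- by a monotonicity argument on the indices $i<j$ along $P_{v_2,v_0}$: the Jordan curve $t_0,\dots,t_i,t'_j,\dots,t'_0$ forces the indices of successive crossing edges to strictly increase as one goes around $v$, which is impossible on a cycle. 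Your case analysis after the observation is clean, but it rests entirely on the observation; without it, the ``$v$ in the interior of $\Delta$'' case is the substantive one, and you will need an argument of the type the paper gives to close it.
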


\begin{proof}
	\begin{figure}
		\centering
		\resizebox{.7\textwidth}{!}{\begin{tikzpicture}[scale=2]
\tikzset{every label/.append style={font=\Huge}}
\tikzset{every node/.style={shape=circle, draw=black, scale=0.25}}
\tikzset{face/.style={->, thick, draw=red}}
\tikzset{red/.style={thick, draw=red}}

\node[] (v0) at (0,1) {};
\node[draw=none] (v0') at (0,1) {};

\node[draw=none, scale=3] at (-0.8,1.1) {$(a)$};

\foreach \j/\i in {0/1,1/2,2/3,3/4,4/5,5/6,6/7}{
\node (v\i) at ({tan(30)*((\i*0.125))}, 1-\i*0.125) {};
\node (v\i') at ({-tan(30)*((\i*0.125))}, 1-\i*0.125) {};
\draw[very thin] (v\i) -- (v\j);
\draw[very thin] (v\i') -- (v\j');
}

\draw[very thin] (v7) -- (v7');

\node[scale=0,draw=none, label=90:$t_0'{=}t_0$] () at (0,1) {};
\node[scale=1,draw=none, label=10:$t_1$] () at ({tan(30)*((1*0.125))}, 1-1*0.125) {};
\node[scale=1,draw=none, label=170:$t_1'$] () at ({-tan(30)*((1*0.125))}, 1-1*0.125) {};
\node[scale=1,draw=none, label=10:$t_i$] () at ({tan(30)*((3*0.125))}, 1-3*0.125) {};
\node[scale=1,draw=none, label=170:$t_i'$] () at ({-tan(30)*((3*0.125))}, 1-3*0.125) {};
\node[scale=1,draw=none, label=10:$t_j$] () at ({tan(30)*((4*0.125))}, 1-4*0.125) {};
\node[scale=1,draw=none, label=170:$t_j'$] () at ({-tan(30)*((4*0.125))}, 1-4*0.125) {};
\node[scale=1,draw=none, label=10:$t_k$] () at ({tan(30)*((6*0.125))}, 1-6*0.125) {};
\node[scale=1,draw=none, label=170:$t_k'$] () at ({-tan(30)*((6*0.125))}, 1-6*0.125) {};
\node[scale=1,draw=none, label=10:$t_s$] () at ({tan(30)*((7*0.125))}, 1-7*0.125) {};
\node[scale=1,draw=none, label=170:$t_s'$] () at ({-tan(30)*((7*0.125))}, 1-7*0.125) {};

\draw[thick] (v3) -- (v4');

\begin{scope}[shift={(2,0)}]
\node[] (v0) at (0,1) {};
\node[draw=none] (v0') at (0,1) {};

\node[draw=none, scale=3] at (-0.8,1.1) {$(b)$};

\foreach \j/\i in {0/1,1/2,2/3,3/4,4/5,5/6,6/7}{
\node (v\i) at ({tan(30)*((\i*0.125))}, 1-\i*0.125) {};
\node (v\i') at ({-tan(30)*((\i*0.125))}, 1-\i*0.125) {};
\draw[very thin] (v\i) -- (v\j);
\draw[very thin] (v\i') -- (v\j');
}

\draw[very thin] (v7) -- (v7');

\node[scale=0,draw=none, label=90:$t_0{=}t_0''$] () at (0,1) {};
\node[scale=1,draw=none, label=10:$t_1''$] () at ({tan(30)*((1*0.125))}, 1-1*0.125) {};
\node[scale=1,draw=none, label=170:$t_1$] () at ({-tan(30)*((1*0.125))}, 1-1*0.125) {};
\node[scale=1,draw=none, label=10:$t_i''$] () at ({tan(30)*((3*0.125))}, 1-3*0.125) {};
\node[scale=1,draw=none, label=170:$t_i$] () at ({-tan(30)*((3*0.125))}, 1-3*0.125) {};
\node[scale=1,draw=none, label=10:$t_j''$] () at ({tan(30)*((4*0.125))}, 1-4*0.125) {};
\node[scale=1,draw=none, label=170:$t_j$] () at ({-tan(30)*((4*0.125))}, 1-4*0.125) {};
\node[scale=1,draw=none, label=10:$t_k''$] () at ({tan(30)*((6*0.125))}, 1-6*0.125) {};
\node[scale=1,draw=none, label=170:$t_k$] () at ({-tan(30)*((6*0.125))}, 1-6*0.125) {};
\node[scale=1,draw=none, label=10:$t_s''$] () at ({tan(30)*((7*0.125))}, 1-7*0.125) {};
\node[scale=1,draw=none, label=170:$t_s$] () at ({-tan(30)*((7*0.125))}, 1-7*0.125) {};

\draw[thick] (v3) -- (v4');
\draw[thick] (v4) -- (v6');
\end{scope}

\end{tikzpicture}}
		
		\caption{\label{fig:connectingedge} Double chambers with one or two edges that are copies of edges in a separating cycle.  }
	\end{figure}
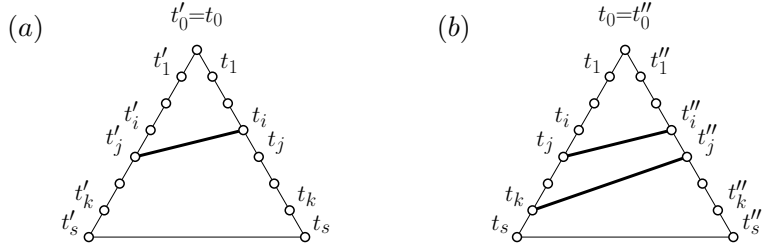

	As $G$ is $c3$-embedded, the embedded subgraph of all vertices and edges of $O(G)$ belonging to one of the double chambers containing $v$ is plane
	and the boundary -- that is: the graph formed by the
        vertices and edges lying on the 2-edges of $D_G$ -- is a simple cycle, that we consider to be the boundary of the outer face of the graph $S_v$.
        If $c$ contains only edges on edges of $D_G$, then
        $c$ is the boundary of one double chamber, so assume that $c$ contains at least one edge in the interior of a double chamber.
	
	Assume that there is a double chamber $D$ with 2-vertex $v$ but no subpath of $c$ with edges inside $D$ connecting interior vertices on the paths replacing the two 1-edges.
        Either using the topological notion of a path or subdividing the faces of $D$ again, there is a path $P$ in $D$ connecting $v$ with a (possibly new) vertex on the 2-edge of $D$
        not containing vertices of $c$, except maybe for $v_2$.
        Let $P_e$ be the path that is one of the subdivided 1-edges containing $v$. Connecting the two endpoints of $P\cup P_e$ by a new edge through the outer face of $S_v$, we have a Jordan
        curve that must be
        crossed an even number of times by $c$ using vertices on $e$.  So each such edge $e$ is crossed $0$, $2$, or $4$ times.
        Four crossings would imply that $c$ is contained in the two neighbouring chambers, which would fulfill the requirements, but is due to Lemma~\ref{lem:minpath_2} (i) in fact impossible.
        If no such $e$ is crossed, $c$ is in just one double chamber. If an edge $e$ is crossed twice, due to Lemma~\ref{lem:minpath_2} (i) in case of only one edge in a double chamber
        there is a 2-cycle with one edge on the boundary (so only in one double chamber). If there are two edges connecting the vertices
        on each side, $c$ must in fact be in the two double chambers sharing $e$.
	
	Now we can assume that every double chamber with 2-vertex $v$ has a subpath of $c$ connecting interior vertices of the two subdivided 1-edges. As $G$ is $c3$-embedded, it has no
        faces of size two and $v$ is contained in at least three double chambers, so that there must be three or four double chambers as every double chamber must contain at least
        one edge of the 4-cycle $c$. In each case there are at least two double chambers with only one edge of $c$.  If the vertices on $P_{v_2,v_0}$ in $O$ are -- in this order --
        $t_0,t_1,\dots,t_s$, the edge must be $\{t_i,t_j\}$ with $0< i < j\le s$, as there are no loops in $O$.  In the double chamber patch we will denote the vertices with
        $t_0,t_1,\dots,t_s$, resp. $t'_0,t'_1,\dots,t'_s$ as in Figure~\ref{fig:connectingedge}(a).  Assume w.l.o.g.\ that the situation is as in Figure~\ref{fig:connectingedge}(a)
        and that the vertex with index $j$ is $t'_j$ and not $t_j$.  Due to the Jordan curve theorem applied to the cycle $t_0=t'_0,t'_1,\dots ,t'_j,t_i,t_{i-1},\dots ,t_0$ there
        is no edge $\{t'_k,t_m\}$ with $k<j$ and $m>i$. If the edge $\{t'_j,t''_n\}$ of $c$ sharing $t'_j$ with $\{t_i,t'_j\}$ is also the only edge in the double chamber, this
        argumentation gives $n>j$ and iterating this over all neighbouring double chambers containing only one edge, one gets for the (possibly closed) path $t_i,\dots ,t''_o$ or
        $t_i,\dots ,t''''_o=t_i$ from the first vertex of the first chamber to the last vertex of the last chamber that $o>i$ -- so in case of four double chambers we already have
        our contradiction.
	
	In case of three double chambers, we have a path $\{t_i,t'_j,t''_o\}$ with $i<j<o$ through two double chambers and in the third double chamber -- containing two edges of $c$ --
	we have another path of length two from $t''_o$ to $t_i$. As visualized in Figure~\ref{fig:connectingedge}(b), such a path has to cross two nested Jordan curves and therefore
	would have to have length at least three -- a contradiction.
\end{proof}

\begin{theorem}\label{thm:cycle_butterfly_3}
	
	Let $G$ be a $c3$-embedded graph and $O$ a lopsp-operation. Then $O(G)$ is $c3$-embedded if and only if it is $c2$-embedded and for each cut-path $P$ in $O$ we have that
        there is no nontrivial 4-cycle in a patch of at most two adjacent copies of $O_P$ -- that is:
        copies in double chambers that share a side.
        
\end{theorem}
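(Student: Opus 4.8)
The plan is to reduce everything, via Lemma~\ref{lem:conn_characterisation}, to a statement about nontrivial $4$-cycles in $B_{O(G)}=O_P(G)$, and then to localise such a cycle to at most two adjacent copies of $O_P$ so that Lemma~\ref{lem:around_v2} applies. First I would fix the global reductions: since $G$ is $c3$-embedded we have $fw(G)\ge 3$, hence $fw(O(G))\ge 3$ by Lemma~\ref{lem:fw_G_leq_fw_O(G)}; consequently every $4$-cycle in $B_{O(G)}$ is contractible, and by Lemma~\ref{lem:conn_characterisation}(ii) the graph $O(G)$ is $c3$-embedded if and only if $O(G)$ is $c2$-embedded and $B_{O(G)}$ contains no nontrivial $4$-cycle. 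Throughout I would work with a cut-path $P$ of minimal length, so that Lemmas~\ref{lem:minpath_2} and~\ref{lem:around_v2} are available.

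For the ``only if'' direction, if $O(G)$ is $c3$-embedded it is in particular $c2$-embedded; and if some cut-path $P'$ admitted a nontrivial $4$-cycle inside a patch $H$ of at most two copies of $O_{P'}$ glued along a side, then, since $G$ is $c3$-embedded, $D_G$ contains two distinct double chambers realising this adjacency type --- any single double chamber for one copy, the two double chambers on a type-$1$ edge of $D_G$ for two copies glued along a $v_2$--$v_0$ side, and the two double chambers on a type-$1$ vertex of $D_G$ for two copies glued along the $v_1$-side --- and in a polyhedral graph these pairs are genuinely distinct because each edge of $G$ lies on two distinct faces and each vertex of a face is joined to it by two distinct edges of that face. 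Gluing these copies into $D_{G,P'}$ embeds $H$, hence the nontrivial $4$-cycle, into $O_{P'}(G)=B_{O(G)}$, contradicting Lemma~\ref{lem:conn_characterisation}(ii).

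For the ``if'' direction, assume $O(G)$ is $c2$-embedded and that no cut-path produces a nontrivial $4$-cycle in a patch of at most two adjacent copies, and suppose for contradiction that $O(G)$ is not $c3$-embedded; then $B_{O(G)}$ has a nontrivial $4$-cycle $c$, which is contractible. The core step is the claim that $c$ is either contained in the subgraph $S_v$ of $B_{O(G)}$ spanned by the double chambers of $D_G$ with a fixed type-$2$ vertex $v$, or contained in the union of two double chambers of $D_G$ sharing a type-$1$ vertex (equivalently, two copies of $O_P$ glued along the $v_1$-side). Granting the claim: in the second case $c$ already lies in a patch of two adjacent copies of $O_P$, contradicting the hypothesis; in the first case Lemma~\ref{lem:around_v2} applies and yields either a $2$-cycle inside one of those double chambers --- hence a $2$-cycle inside a single copy of $O_P$, which by Theorem~\ref{thm:cycle_butterfly_2} contradicts $O(G)$ being $c2$-embedded --- or else $c$ lies in one double chamber or in two double chambers sharing a type-$1$ edge of $D_G$, i.e.\ in a patch of at most two adjacent copies of $O_P$, again a contradiction. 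Hence $O(G)$ is $c3$-embedded.

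It remains to prove the claim, and this is the step I expect to be the main obstacle. The subgraphs $S_v$, as $v$ ranges over the type-$2$ vertices of $D_G$ (the faces of $G$), cover $B_{O(G)}$, and for faces $v,v'$ of $G$ sharing an edge $e$ the intersection $S_v\cap S_{v'}$ is exactly the $v_1$-side sitting on $e$ --- a path of the form $v_0$-corner, copy of $P_{v_0,v_1}$, $v_1$-corner, copy of $P_{v_0,v_1}$, $v_0$-corner --- with the interiors of the two incident double chambers lying on opposite sides of this path. If $c$ is contained in no single $S_v$ it must cross such a seam, and being a cycle it crosses it in at least two vertices; since $c$ has only four edges there are few possibilities for how these edges distribute among the $S_v$ relative to the crossing vertices on a seam. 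Using minimality of $P$ (Lemma~\ref{lem:minpath_2}, which controls edges running between the two copies of $P_{v_0,v_1}$ and forbids shortcut edges inside a side), the absence of $2$-cycles in $B_{O(G)}$ (Lemma~\ref{lem:conn_characterisation}(i), as $O(G)$ is $c2$-embedded), and Jordan-curve and distance arguments like those in the proofs of Lemmas~\ref{lem:move_path_over_face} and~\ref{lem:around_v2}, one rules out $c$ meeting double chambers with more than two distinct type-$2$ vertices and pins $c$ down either to a single $S_v$ or to the two double chambers on one type-$1$ vertex. Checking these configurations --- controlling how a length-$4$ cycle can straddle the seams between the $S_v$ --- is where the bulk of the work lies; the rest is bookkeeping with the earlier lemmas.
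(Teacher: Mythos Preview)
Your overall architecture is right and matches the paper: reduce via Lemma~\ref{lem:fw_G_leq_fw_O(G)} and Lemma~\ref{lem:conn_characterisation} to the existence of a nontrivial $4$-cycle $c$ in $B_{O(G)}$, localise $c$ to at most two adjacent copies of $O_P$, and invoke Lemma~\ref{lem:around_v2} for the case where $c$ sits around a single $2$-vertex of $D_G$. Your dichotomy --- $c\subseteq S_v$ for some $2$-vertex $v$, or $c$ lies in the two double chambers sharing a fixed $1$-vertex --- is exactly the conclusion the paper reaches.

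The gap is the step you yourself flag: proving that dichotomy. Your sketch (``$c$ crosses a seam, only four edges, Jordan-curve arguments'') does not contain the idea that actually makes this work, and the seam-crossing picture is more treacherous than it looks because the $v_0$-corners are shared by arbitrarily many double chambers, so ``$c$ leaves $S_v$'' does not pin down which $S_{v'}$ it enters. The paper's device is different in kind: take a minimal set $M$ of double chambers of $D_G$ whose patches cover $c$, and build a \emph{saturating $4$-cycle} $c_s$ in $B_G$ itself --- a $4$-cycle along edges of $D_G$ that uses at least one edge from every double chamber in $M$. This transfers the problem from $B_{O(G)}$ back to $B_G$, where the hypothesis that $G$ is $c3$-embedded bites directly: by Lemma~\ref{lem:conn_characterisation} every $4$-cycle in $B_G$ is trivial, so $c_s$ is the boundary of one double chamber or of two double chambers sharing their $1$-vertex. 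That confines $M$ to the six double chambers $N_e$ around an edge $e$ of $G$, after which a short case analysis (using Lemma~\ref{lem:minpath_2} and Lemma~\ref{lem:around_v2}) forces $c$ into one $S_v$ or into the two central double chambers.

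Constructing $c_s$ is itself nontrivial: one must rule out that a double chamber in $M$ shares two sides with the rest of $M$, and handle the bookkeeping of how the edges of $c$ distribute over $|M|\in\{3,4\}$ double chambers; Lemma~\ref{lem:minpath_2} is used repeatedly here. This is the ``bulk of the work'' you anticipated, but the organising idea --- pull the $4$-cycle down to $B_G$ and use $c3$-embeddedness of $G$ there --- is what your proposal is missing.
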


\begin{proof}
	
	It follows from Lemma \ref{lem:fw_G_leq_fw_O(G)} that $fw(O(G))\geq fw(G) \geq 3$, so we can use Lemma~\ref{lem:conn_characterisation} on both $G$ and $O(G)$. 
	
	The implication that $O(G)$ is not $c3$-embedded if there is a 2- or nontrivial 4-cycle for some cut-path $P$ is obvious, as corresponding pairs of two adjacent copies of $O_P$
        in $O(G)$ would contain such cycles.

    For the other direction we have to prove that if $O(G)$ is not $c3$-embedded, but it is $c2$-embedded, then there is a nontrivial 4-cycle in a patch of two adjacent copies of $O_P$ for some cut-path $P$. 
	
    Let $P$ be a cut-path in $O$ of minimal length. We will refer to the copies of $v_2$ or $v_0$ in the double chamber patches as the corners of the double chamber patches and use
    that two double chambers sharing two corners also share the side between them, as neither $B_G$ nor $G$ have double edges.  Let $c$ be a nontrivial 4-cycle in $O(G)$ and $M$ be
    a set of double chambers in $D_G$ of minimal size, so that the union of all double chamber patches for double chambers in $M$ contains $c$.  This implies that
    $1\le |M|\le 4$.
    By abuse of language we will also use $M$ for the set of corresponding double chamber patches.  First we want to prove that there is a 4-cycle in $D_G$ containing at least
    one edge of every double chamber in $M$, which we will call a saturating 4-cycle.  Suppose the contrary -- that is, that there is no saturating 4-cycle.  As for $|M|\le 2$ the
    boundary of any double chamber in $M$ is a saturating 4-cycle (note that for two double chambers they would have to share a side), we have $|M|\in \{3,4\}$.
	
	\medskip
	
	{\bf Claim 1:} There is no double chamber patch in $M$ sharing exactly one corner with the other double chambers in $M$ and there is no double chamber patch $D\in M$ that
        contains only one edge $e$ of $c$ and shares only one side with the other double chamber patches in $M$.

        \bigskip
	
	{\em Proof of Claim 1:} While the first part is obvious, as $c$ has to pass through the double chamber, the second part is a consequence of Lemma~\ref{lem:minpath_2}: the
        edge $e$ is either between points on the same edge of the double chamber patch or between different copies of $P_{v_0,v_1}$. In the first case Lemma~\ref{lem:minpath_2} implies
        that $e$ is on the boundary of $D$, so that it is also in the double chamber sharing the side with $D$.
        In the second case  Lemma~\ref{lem:minpath_2}  implies a nontrivial 4-cycle in two neighbouring double chambers sharing the side with the 1-vertex, so $|M|\le2$. In each case we have
        a contradiction to the minimality of $M$. \hfill $\square$

	\medskip 
	
	{\bf Claim 2:} There is no double chamber patch $D\in M$ sharing at
	least two sides with the other double chambers in $M$.
      \bigskip
	
	{\em Proof of Claim 2:} 	
	Assume the contrary. Then the two sides have a common vertex, which is a
	0-vertex or a 2-vertex,
	so we have one of the two situations in Figure~\ref{fig:3chambers}. In
	part (a) we can have that $v_0^1=v_0^2$, if the central 2-vertex
	corresponds to a 3-gon, but in
	case (b) all vertices and edges must be distinct as $G$ is $c3$-embedded.
	
	In both cases there must be a fourth double chamber, as otherwise the
	boundary of the central double chamber would be a saturating 4-cycle.
	This implies that
	all double chambers contain exactly one edge of $c$. In case (a) Claim~1
	implies that the fourth chamber must contain $v_0^1$ and $v_0^2$ and as
	a path between the two 0-vertices would require two edges of $c$ (there are no loops in $O$), the chamber must also contain the edges $\{v_0^1,v_2\}$ and $\{v_0^2, v_2\}$. It follows that the
	only possible situation for case (a) is when
	$v_2$ corresponds to a 4-gon and $M$ consists of the double chambers
	containing $v_2$. Then Lemma~\ref{lem:around_v2}
	implies that $M$ was not minimal.
	
	In case (b) Claim 1 implies that there is a fourth chamber containing
	$v_0^1$ and $v_2^2$, but as in this chamber there is an edge connecting
	these vertices, $v_0^1,v_2^1,v_0^2,v_2^2$
	would be a saturating 4-cycle. \hfill $\square$
	
	\medskip
	
	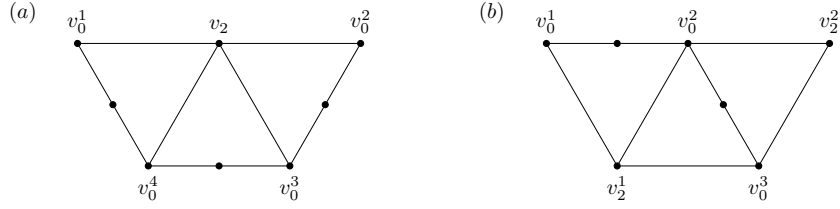
\begin{figure}
          \begin{center}
	    	\resizebox{.75\textwidth}{!}{\begin{tikzpicture}
\tikzset{vertex/.style={shape=circle, draw=black, scale=0.3, fill=black}}

\tikzset{noNode/.style={draw=none}}


\node[vertex] (v1) at (0,0) {};

\node[vertex, label=$v_2$] (v2) at (0,2) {};
\node[vertex, label=-90:$v_0^3$] (v0) at ({2*tan(30)}, 0) {};
\node[vertex, label=-90:$v_0^4$] (v0') at ({-2*tan(30)}, 0) {};
\node[vertex, label=$v_0^2$] (v0r) at ({4*tan(30)}, 2) {};
\node[vertex, label=$v_0^1$] (v0l) at ({-4*tan(30)}, 2) {};
\node[vertex] (v1r) at ({3*tan(30)}, 1) {};
\node[vertex] (v1l) at ({-3*tan(30)}, 1) {};

\begin{scope}
\draw (v0) -- (v2) -- (v0');
\draw (v0l) -- (v2) -- (v0r);
\draw (v0r) -- (v1r) -- (v0) -- (v1) -- (v0') -- (v1l) -- (v0l);
\end{scope}

\node at (-3.2,2.5) {$(a)$};

\end{tikzpicture}\qquad\qquad
		\begin{tikzpicture}
\tikzset{vertex/.style={shape=circle, draw=black, scale=0.3, fill=black}}

\tikzset{noNode/.style={draw=none}}


\node[vertex, label=$v_0^2$] (v2) at (0,2) {};
\node[vertex, label=-90:$v_0^3$] (v0) at ({2*tan(30)}, 0) {};
\node[vertex, label=-90:$v_2^1$] (v0') at ({-2*tan(30)}, 0) {};
\node[vertex, label=$v_2^2$] (v0r) at ({4*tan(30)}, 2) {};
\node[vertex, label=$v_0^1$] (v0l) at ({-4*tan(30)}, 2) {};
\node[vertex] (v1r) at ({tan(30)}, 1) {};
\node[vertex] (v1l) at ({-2*tan(30)}, 2) {};

\begin{scope}
\draw (v0l) -- (v1l) -- (v2) -- (v1r) -- (v0);
\draw (v0)-- (v0') -- (v0l);
\draw (v0) --(v0r) --(v2) -- (v0') ;
\end{scope}

\node at (-3.2,2.5) {$(b)$};

\end{tikzpicture}}
		\caption{\label{fig:3chambers}The two configurations where a double chamber shares two
		  edges. }
          \end{center}
	\end{figure}

	Let the edges of $c$ be $e_0,\dots ,e_3$ in this cyclic order and
	$d_0,\dots ,d_3$ be the cyclic sequence of double chambers of $M$, so
	that $e_i\in d_i$ for $0\le i <4$. Let $b_0,\dots ,b_k$
	be the sequence where $d_i$ is removed if $d_i=d_{(i+1)mod 4}$. Our
	assumption that no saturating 4-cycle exists implies $k\in \{2,3\}$ as
	these are all elements of $M$. Let now
	$g_0,\dots ,g_r$ be the sequence where two double chambers following
	each other in $b_0,\dots ,b_k$ are joined to one $g_i$ if they share an
	edge. By Claim 2, the $g_i$ are a single double chamber
	or a pair of two double chambers sharing an edge
	and each $g_i$ shares a vertex with $g_{(i+1)mod (r+1)}$ and a vertex
	with $g_{(i-1)mod (r+1)}$. Using the fact that there is no edge between
	the two 0-vertices in a double chamber patch,
	it follows that for each $g_i$ there is a path along the
	edges of its double chambers
	between the two connecting vertices
	that has an edge of every double chamber in $g_i$ and has at most as many edges
	of $D_G$ as $g_i$ contains edges of $c$. As edges in different
	$g_i$ are different and as there are no double edges in $B_G$, we get a
	saturating cycle by connecting these parts. Note that though our
	argumentation gives that we have a cycle of length
	at most 4, it must have length exactly 4 as no shorter cycles exist.

	\medskip
	
	So in each case our assumption that no saturating cycle exists, leads to
	a contradiction and we can assume the existence of a saturating cycle $c_s$.
	As $G$ is $c3$-embedded, due to Lemma~\ref{lem:conn_characterisation}, $c_s$ must be the boundary of a double chamber,
        or two double chambers sharing the 1-vertex. 
	
	If $e$ is the 1-vertex in these one or two double chambers, then $c$ is contained in the set $N_e$ consisting of all six double chambers that share an edge with a double
        chamber containing $e$. We say that the two double chambers with 1-vertex $e$ are the central double chambers, and the four other double chambers in $N_e$ are the extremal
        double chambers. The three possible configurations of $N_e$ are shown in Figure~\ref{fig:butterflies}.
	
	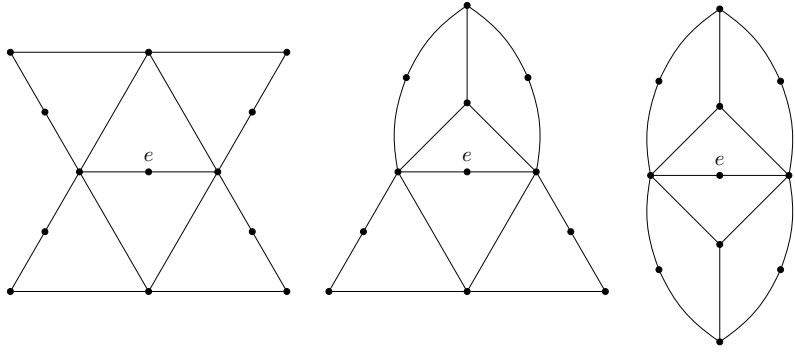
\begin{figure}
		\begin{center}
			\resizebox{.7\textwidth}{!}{\begin{tikzpicture}
\tikzset{vertex/.style={shape=circle, draw=black, scale=0.3, fill=black}}

\tikzset{noNode/.style={draw=none}}


\node[draw=none] (_) at (0, {-4.82*tan(30)}) {};

\node[vertex, label=$e$] (v1) at (0,0) {};
\node[vertex] (v2') at (0,-2) {};
\node[vertex] (v2) at (0,2) {};
\node[vertex] (v0) at ({2*tan(30)}, 0) {};
\node[vertex] (v0') at ({-2*tan(30)}, 0) {};
\node[vertex] (v0r) at ({4*tan(30)}, 2) {};
\node[vertex] (v0l) at ({-4*tan(30)}, 2) {};
\node[vertex] (v0r') at ({4*tan(30)}, -2) {};
\node[vertex] (v0l') at ({-4*tan(30)}, -2) {};
\node[vertex] (v1r) at ({3*tan(30)}, 1) {};
\node[vertex] (v1l) at ({-3*tan(30)}, 1) {};
\node[vertex] (v1r') at ({3*tan(30)}, -1) {};
\node[vertex] (v1l') at ({-3*tan(30)}, -1) {};

\begin{scope}
\draw  (v2) --(v0) -- (v2') --(v0') -- (v2);
\draw (v0) -- (v1) -- (v0');
\draw (v0l) -- (v2) -- (v0r) -- (v1r) -- (v0) -- (v1r') -- (v0r') -- (v2') -- (v0l') -- (v1l') -- (v0') -- (v1l) -- (v0l);
\end{scope}

\end{tikzpicture}
			\quad
			\begin{tikzpicture}
\tikzset{vertex/.style={shape=circle, draw=black, scale=0.3, fill=black}}

\tikzset{noNode/.style={draw=none}}


\node[draw=none] (_) at (0, {-4.82*tan(30)}) {};

\node[vertex, label=$e$] (v1) at (0,0) {};
\node[vertex] (v2') at (0,-2) {};
\node[vertex] (v2) at (0,{(2)*tan(30)}) {};
\node[vertex] (v0) at ({2*tan(30)}, 0) {};
\node[vertex] (v0') at ({-2*tan(30)}, 0) {};
\node[vertex] (v0r) at (0, {4.82*tan(30)}) {};
\node[vertex] (v0r') at ({4*tan(30)}, -2) {};
\node[vertex] (v0l') at ({-4*tan(30)}, -2) {};
\node[vertex] (v1r) at ({-1.1*cos(157.5)},{(2)*tan(30) + 1.1*sin(157.5)}) {};
\node[vertex] (v1l) at ({1.1*cos(157.5)},{(2)*tan(30) + 1.1*sin(157.5)}) {};
\node[vertex] (v1r') at ({3*tan(30)}, -1) {};
\node[vertex] (v1l') at ({-3*tan(30)}, -1) {};

\begin{scope}
\draw  (v2) --(v0) -- (v2') --(v0') -- (v2);
\draw (v0) -- (v1) -- (v0');
\draw  (v0) -- (v1r') -- (v0r') -- (v2') -- (v0l') -- (v1l') -- (v0');
\draw (v1r) to[in=-35, out = 115] (v0r);
\draw (v1r) to[in=80, out = -70] (v0);
\draw (v1l) to[in=-145, out = 65] (v0r);
\draw (v1l) to[in=100, out = -110] (v0');
\draw (v0r) -- (v2);
\end{scope}

\end{tikzpicture}
			\quad
			\begin{tikzpicture}
\tikzset{vertex/.style={shape=circle, draw=black, scale=0.3, fill=black}}

\tikzset{noNode/.style={draw=none}}


\node[vertex, label=$e$] (v1) at (0,0) {};
\node[vertex] (v2') at (0,{-(2)*tan(30)}) {};
\node[vertex] (v2) at (0,{(2)*tan(30)}) {};
\node[vertex] (v0) at ({2*tan(30)}, 0) {};
\node[vertex] (v0') at ({-2*tan(30)}, 0) {};
\node[vertex] (v0r) at (0, {4.82*tan(30)}) {};
\node[vertex] (v0r') at (0, {-4.82*tan(30)}) {};
\node[vertex] (v1r) at ({-1.1*cos(157.5)},{(2)*tan(30) + 1.1*sin(157.5)}) {};
\node[vertex] (v1l) at ({1.1*cos(157.5)},{(2)*tan(30) + 1.1*sin(157.5)}) {};
\node[vertex] (v1r') at ({-1.1*cos(157.5)},{-(2)*tan(30) - 1.1*sin(157.5)}) {};
\node[vertex] (v1l') at ({1.1*cos(157.5)},{(-2)*tan(30) - 1.1*sin(157.5)}) {};

\begin{scope}
\draw  (v2) --(v0) -- (v2') --(v0') -- (v2);
\draw (v0) -- (v1) -- (v0');
\draw (v1r) to[in=-35, out = 115] (v0r);
\draw (v1r) to[in=80, out = -70] (v0);
\draw (v1l) to[in=-145, out = 65] (v0r);
\draw (v1l) to[in=100, out = -110] (v0');
\draw (v0r) -- (v2);

\draw (v1r') to[in=35, out = -115] (v0r');
\draw (v1r') to[in=-80, out = 70] (v0);
\draw (v1l') to[in=145, out = -65] (v0r');
\draw (v1l') to[in=-100, out = 110] (v0');
\draw (v0r') -- (v2');
\end{scope}

\end{tikzpicture}}
		\end{center}
		\caption{\label{fig:butterflies} The three possible configurations of the six double chambers in the set $N_e$ are shown here. Different vertices in the drawing
                  represent different vertices of $D_G$.}
	\end{figure}

        If $c$ lies entirely in the chambers containing the same 2-vertex of $N_e$, we can apply Lemma~\ref{lem:around_v2} to conclude that $c$ is contained in at most 2 adjacent
        chambers.  Otherwise Lemma~\ref{lem:minpath_2}
        implies that there are two edges in each set of double chambers sharing a 2-vertex. As
        each external chamber shares just one vertex with the edge between the two central double chambers, the only possibility (again using Lemma~\ref{lem:minpath_2})
        that at least one of these two edges is not
        contained in the central chamber is that a 2-vertex $f$ in $N_e$ corresponds to a face of size three in $G$, as in the second and third configuration of
        Figure~\ref{fig:butterflies} and that both edges go from a 0-vertex to an internal vertex of the edge shared by the two external chambers.
        Let $e_1$ and $e_2$ be the edges of $c$ in the two external chambers.
        But then $e_1$ would correspond to an edge in $O_P$ from w.l.o.g. $v_{0,R}$ to an internal vertex of a copy of $P_{(v_{0,L}),v_2}$ and $e_2$ would
        correspond to an edge from $v_{0,L}$ to a copy of $P_{(v_{0,R})
          ,v_2}$, which would imply crossing edges in $O_P$.
        So if $c$ is not contained in two double chambers sharing a 2-vertex, it is contained in the two central double chambers.
\end{proof}

For a $ck$-lopsp-operation $O$ and an embedded graph $G$ with minimum face size at least $k$ and minimum degree at least $k$ (but not necessarily $ck$-embedded),
the graph $O(G)$ has also minimum face
  size at least $k$ and minimum degree at least $k$. For the vertices of $B_{O(G)}$ of type 0 or 2 that are not on 0-vertices or 2-vertices of $D_G$, the fact that they have degree
  at least $2k$ follows from $O$ being a $ck$-lopsp-operation, as these degrees also occur in the tiling $T_O$. For the others it follows from the degrees of 0-vertices and
  2-vertices in $D_G$. In case $G$ is also $ck$-embedded, we have a stronger result:

\begin{theorem}\label{thm:main}
  The following statements are equivalent for $k\in\{1,2,3\}$ and a lopsp-operation $O$:

  \begin{description}
    \item[(a)] $O$ is a $ck$-lopsp-operation 
    \item[(b)] For all $ck$-embedded graphs $G$, $O(G)$ is $ck$-embedded.
    \item[(c)] There exists a $ck$-embedded graph $G$ such that $O(G)$ is $ck$-embedded.
      \end{description}
\end{theorem}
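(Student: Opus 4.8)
The plan is to prove the cycle $(\mathrm{a})\Rightarrow(\mathrm{b})\Rightarrow(\mathrm{c})\Rightarrow(\mathrm{a})$. The point that ties the three statements together is that ``$O$ is a $ck$-lopsp-operation'' and ``$O(G)$ is $ck$-embedded for a $ck$-embedded $G$'' are both governed by the \emph{same} finitely many, graph-independent features of $O$. On the one hand, by Theorems~\ref{thm:cycle_butterfly_2} and \ref{thm:cycle_butterfly_3}, together with Lemma~\ref{lem:fw_G_leq_fw_O(G)} and the remark preceding the theorem, for a $ck$-embedded $G$ the graph $O(G)$ is $ck$-embedded exactly when every patch $O_P$ contains no $2$-cycle and (when $k=3$) no patch of at most two adjacent copies of $O_P$ contains a nontrivial $4$-cycle, and every interior type-$0$ or type-$2$ vertex of $O$ has degree at least $2k$. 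On the other hand, via Definition~\ref{def:lopsp_connectivity}, the very same conditions describe when the associated tiling $T_O$ — which is $O$ applied to the regular hexagonal tiling $H$ — is $ck$-embedded; here we use that a tiling of the Euclidean plane has infinite face-width and (plane) minimum degree at least $2$, so that for $k\le 3$ being $ck$-embedded coincides with being $k$-connected with all faces of size at least $k$. The case $k=1$ is trivial: every lopsp-operation is a $c1$-lopsp-operation, and $O(G)$ is always connected with minimum degree and minimum face size at least $1$, so (a), (b), (c) all hold; assume $k\in\{2,3\}$ from now on.

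For $(\mathrm{a})\Rightarrow(\mathrm{b})$, let $O$ be a $ck$-lopsp-operation and $G$ a $ck$-embedded graph. Then $fw(O(G))\ge fw(G)\ge k$ by Lemma~\ref{lem:fw_G_leq_fw_O(G)}, and $O(G)$ has minimum degree and minimum face size at least $k$ by the remark preceding the theorem, so by Lemma~\ref{lem:conn_characterisation} it remains to show $B_{O(G)}$ has no $2$-cycle, and, for $k=3$, no nontrivial $4$-cycle. Since $T_O=O(H)$ is $ck$-embedded and $H$ is $c3$-embedded, applying Theorem~\ref{thm:cycle_butterfly_2} — and, for $k=3$, also Theorem~\ref{thm:cycle_butterfly_3} — to the input $H$ gives that no $O_P$ has a $2$-cycle and no patch of at most two adjacent copies of $O_P$ has a nontrivial $4$-cycle. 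Feeding these graph-independent facts back into Theorems~\ref{thm:cycle_butterfly_2} and \ref{thm:cycle_butterfly_3}, now with input $G$, yields first that $O(G)$ is $c2$-embedded and then, when $k=3$, that it is $c3$-embedded. The implication $(\mathrm{b})\Rightarrow(\mathrm{c})$ is immediate: the tetrahedron embedded on the sphere is $ck$-embedded for every $k\in\{1,2,3\}$.

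For $(\mathrm{c})\Rightarrow(\mathrm{a})$, let $G$ be $ck$-embedded with $O(G)$ $ck$-embedded. By Theorem~\ref{thm:cycle_butterfly_2} — and Theorem~\ref{thm:cycle_butterfly_3} when $k=3$ — the $ck$-embeddedness of $O(G)$ forces that no $O_P$ has a $2$-cycle and no patch of at most two adjacent copies of $O_P$ has a nontrivial $4$-cycle; moreover, if some interior type-$0$ or type-$2$ vertex of $O$ had degree $<2k$, a copy of it in $B_{O(G)}$ would produce a vertex of degree $<k$ or a face of size $<k$ in $O(G)$, so all these degrees are at least $2k$. Applying Theorems~\ref{thm:cycle_butterfly_2} and \ref{thm:cycle_butterfly_3} to the $c3$-embedded input $H$, together with Lemma~\ref{lem:fw_G_leq_fw_O(G)} and these degree bounds, shows that $T_O=O(H)$ is $ck$-embedded; since $T_O$ has infinite face-width this is the same as $T_O$ being $k$-connected with all faces of size at least $k$, i.e.\ $O$ is a $ck$-lopsp-operation.

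The main obstacle is not the combinatorics of $O(G)$ — that is already packaged in Theorems~\ref{thm:cycle_butterfly_2} and \ref{thm:cycle_butterfly_3} — but the legitimacy of using the \emph{infinite} tiling $H$ as an input to those theorems and to Lemma~\ref{lem:conn_characterisation}, which were stated for general embedded graphs. One must check that their proofs, which reason locally around a fixed finite cycle and a finite set of (double) chambers, go through verbatim for infinite embedded graphs; alternatively one replaces $H$ by a large toroidal hexagonal grid $H_n$, which is $c3$-embedded, whose image $O(H_n)$ agrees locally with $T_O$, and whose face-width can be made arbitrarily large by Lemma~\ref{lem:fw_G_leq_fw_O(G)}, so that $O(H_n)$ is $ck$-embedded precisely when $T_O$ is $k$-connected with all faces of size at least $k$. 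A secondary point, already advertised after Theorem~\ref{thm:lopsp_is_dd} though not fully formalized, is the identification of $T_O$ with $O$ applied to $H$, equivalently the fact that $B_{T_O}$ is assembled from copies of $O_P$ exactly as when $O$ is applied to $D_H$, which is what lets the conditions on $O_P$ be read directly off the associated tiling.
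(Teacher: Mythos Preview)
Your proof is correct and follows essentially the same strategy as the paper: both hinge on the graph-independence of the $O_P$ conditions in Theorems~\ref{thm:cycle_butterfly_2} and~\ref{thm:cycle_butterfly_3}, and on the identification of $T_O$ with $O$ applied to the hexagonal tiling. The logical packaging differs slightly: the paper proves $(a)\Leftrightarrow(b)$ and $(b)\Leftrightarrow(c)$, while you run the cycle $(a)\Rightarrow(b)\Rightarrow(c)\Rightarrow(a)$, but the content is the same.

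The one substantive difference is how the bridge between the $O_P$ conditions and $T_O$ is realised. For $(a)\Rightarrow(b)$ the paper argues more directly than you do: a $2$-cycle (resp.\ nontrivial $4$-cycle) in $O_P$ visibly sits inside $B_{T_O}$, and a short local argument in the plane tiling then produces a $1$-cut or small face; there is no need to invoke the full strength of Theorems~\ref{thm:cycle_butterfly_2}/\ref{thm:cycle_butterfly_3} with input $H$. For the converse direction the paper avoids your infinite-$H$ issue by a concrete finite substitute: given a small cut (with its finite component) or small face in $T_O$, it embeds a sufficiently large hexagonal patch into a Goldberg--Coxeter fullerene $F$, so that $O(F)$ inherits the defect and contradicts~(b). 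Your toroidal-grid alternative $H_n$ works equally well and by the same local-transfer reasoning; it just needs the remark (which you gesture at) that for large $n$ a small cut of $T_O$ with finite component reappears verbatim in $O(H_n)$. Either device is adequate, and your explicit flagging of the infinite-input issue is a fair point: the paper's own $(a)\Rightarrow(b)$ step also implicitly reads off a cut of $T_O$ from a short cycle in $B_{T_O}$, which is really the easy, local direction of Lemma~\ref{lem:conn_characterisation} applied to a plane tiling.
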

\begin{proof}

  \fbox{$(a)\Rightarrow (b)$}
  
	 For $k=1$ this is trivial. Assume that there is a $c2$-embedded graph $G$ such that $O(G)$ is not $c2$-embedded.  By Theorem~\ref{thm:cycle_butterfly_2} there is a 2-cycle
         in $O_P$ for some cut-path $P$ in $O$. This cycle induces a cycle in $B_{T_O}$, which implies a 1-cut, or a face of size 1 in the tiling $T_O$.  It
         follows that $O$ is not a $c2$-operation -- a contradiction. Similarly, if there is a $c3$-embedded graph $G$ such that $O(G)$ is not $c3$-embedded, we find a 2-cut in
         $T_O$ using the cycle from Theorem~\ref{thm:cycle_butterfly_3}.
	
	\fbox{$(b)\Rightarrow (a)$} For $k=1$ this is trivial.

        Note that due to the definition of $m_{01},m_{02}$ and $m_{12}$ for the vertices $v_0,v_1$, and $v_2$, the lopsp-operation $O_{id}$ representing the identity corresponds to
        the hexagonal tiling of the plane, so that $T_O$ is just the tiling obtained by inserting copies of $O_P$ (for some $P$)
        into the double chamber system of the hexagonal tiling $T_{O_{id}}$. Let us call
        the graph formed by the subdivided 2-edges of this double chamber system the hexagonal skeleton.

        Assume now that $O$ is not a $c2$-lopsp operation. Then there is a face $f$ of size $1$ or a 1-cut
        $\{x\}$ in $T_O$. In case of a 1-cut, at least one of the components of $T_O\setminus \{x\}$, say $C_0$, must be finite. Let $C$ denote a
        finite subgraph of the hexagonal skeleton of $T_O$ containing $f$, resp.\  $C_0$ together with the cut vertex $x$, in bounded faces.
        Using Goldberg-Coxeter operations (see \cite{gocox} or \cite{CK62})
        with sufficiently large parameters to construct large icosahedral fullerenes, we get a fullerene $F$, that is $c3$-embedded and contains an isomorphic copy of $C$ with the
        paths between the 0-vertices replaced by edges. Applying $O$ to that fullerene, we get a subgraph $S$ of $O(F)$ that has a face $f'$ of size 1 or that is isomorphic to $C_0$
        and where all vertices corresponding to
        vertices of $C_0$ have -- except for the vertex $x'$ corresponding to $x$ -- only neighbours in $S$. So $f'$ or the vertex $x'$, which is a cut-vertex of $O(F)$, are contradictions to
        the assumption.
	
	The case $k=3$ is completely analogous, with also a 2-face and a 2-cut in the argument.
	
	\fbox{$(b)\Rightarrow (c)$} Trivial.
	
	\fbox{$(c)\Rightarrow (b)$} Note that the conditions in Theorems~\ref{thm:cycle_butterfly_2} and \ref{thm:cycle_butterfly_3} are -- except for being $ck$-embedded --
        independent of $G$, as $O_P$ and the union of two copies of $O_P$ sharing a side are the same for all these $G$. This implies that if $O(G)$ is $ck$-embedded for some
        $ck$-embedded graph $G$, then $O(H)$ is $ck$-embedded for any $ck$-embedded graph $H$.
\end{proof}

One of the main results of this paper, Theorem~\ref{thm:pol_lopsp}, now follows from Theorem~\ref{thm:main} and Lemma~\ref{lem:fw_G_leq_fw_O(G)}.
\setcounter{theorem}{1}

\begin{theorem}
	If $G$ is a polyhedral embedding of a graph and $O$ is a  $c3$-lsp- or $c3$-lopsp-operation, then $O(G)$ is also a polyhedral embedding.
\end{theorem}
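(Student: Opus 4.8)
The plan is to reduce the statement to machinery already in place, using that ``polyhedral embedding'' and ``$c3$-embedded'' name the same class. By the lemma identifying $c3$-embedded graphs with polyhedral embeddings, the assertion is equivalent to: if $G$ is $c3$-embedded and $O$ is a $c3$-lsp- or $c3$-lopsp-operation, then $O(G)$ is $c3$-embedded. This is precisely Corollary~\ref{cor:k-conn_preserved} specialised to $k=3$, so it suffices to run through that corollary in the two cases.

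First I would treat the lopsp case. If $O$ is a $c3$-lopsp-operation, then by definition its associated tiling $T_O$ is $3$-connected and has all faces of size at least $3$; that is, $O$ satisfies statement (a) of Theorem~\ref{thm:main} with $k=3$. The implication $(a)\Rightarrow(b)$ of that theorem then gives immediately that $O(G)$ is $c3$-embedded for every $c3$-embedded $G$, in particular for our polyhedral $G$, hence $O(G)$ is a polyhedral embedding. This is where the real content lives: through Theorem~\ref{thm:cycle_butterfly_3} a hypothetical nontrivial $4$-cycle in $B_{O(G)}$ is pushed into a patch of at most two adjacent copies of $O_P$ and thence into a $2$-cut or a small face of $T_O$, contradicting $c3$-ness of $T_O$; and Lemma~\ref{lem:fw_G_leq_fw_O(G)}, which yields $fw(O(G))\ge fw(G)\ge 3$, is exactly what makes the cycle characterisation of Lemma~\ref{lem:conn_characterisation} applicable to $O(G)$ to begin with.

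Next I would handle the lsp case by transfer to the lopsp case. Given a $c3$-lsp-operation $O$, Lemma~\ref{lem:lsp_to_lopsp} supplies a lopsp-operation $O_{lopsp}$ with $O(H)=O_{lopsp}(H)$ for every embedded graph $H$, and with $\mathcal{D}(O)$ and $\mathcal{D}(O_{lopsp})$ coding combinatorially isomorphic tilings; hence $T_{O_{lopsp}}$ is combinatorially isomorphic to $T_O$. Since both defining conditions of a $c3$-operation — $3$-connectedness of the underlying graph and minimum face size at least $3$ — are combinatorial invariants of the tiling's cell structure, $O_{lopsp}$ is a $c3$-lopsp-operation. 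Applying the lopsp case to $O_{lopsp}$ then gives that $O(G)=O_{lopsp}(G)$ is $c3$-embedded, i.e.\ a polyhedral embedding.

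I do not expect a genuine obstacle at this level: the theorem is an assembly of Theorem~\ref{thm:main}, Lemma~\ref{lem:fw_G_leq_fw_O(G)}, Lemma~\ref{lem:lsp_to_lopsp} and the $c3$-embedded/polyhedral dictionary. The only point that needs a line of care is the lsp-to-lopsp step, namely that being a $c3$-lopsp-operation is preserved under the combinatorial isomorphism of tilings produced by Lemma~\ref{lem:lsp_to_lopsp}; this holds because the symmetry breaking relating $T_O$ and $T_{O_{lopsp}}$ destroys only metric symmetries and leaves the combinatorial (cell-complex) structure — and therefore connectivity and face sizes — unchanged.
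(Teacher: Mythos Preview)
Your proposal is correct and matches the paper's own approach: the paper simply states that the theorem follows from Theorem~\ref{thm:main} and Lemma~\ref{lem:fw_G_leq_fw_O(G)}, with the lsp case handled via the reduction of Lemma~\ref{lem:lsp_to_lopsp} as announced earlier in the text. You have made explicit precisely the steps the paper leaves implicit, including the observation that the $c3$-property transfers along the combinatorial tiling isomorphism of Lemma~\ref{lem:lsp_to_lopsp}.
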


\section{Future work}
In the last section of \cite{gocox} many open problems are described. They are sometimes just formulated for lsp-operations, but are often as relevant and interesting for
lopsp-operations, so we refer the reader to \cite{gocox}.
While lsp-operations applied to polyhedra seem (this is still an open problem) to increase the symmetry at most by a factor of two, lopsp-operations can increase the
symmetry much more, as can be seen when e.g.\ applying gyro to the tetrahedron. Classifying lopsp-operations that can introduce new symmetries would be
an interesting problem, but maybe even more difficult than solving the problem for lsp-operations.

We knew that there is at least one lsp-operation (the dual) that does not always preserve 3-connectedness for embedded graphs, if the face-width is at most two \cite{dualconnectivity}, so
an obvious question was which other operations do not always preserve 3-connectedness. This was answered for lsp-operations in \cite{heidi_thesis}, where the class of such operations,
called {\em edge-breaking operations}, was characterized. Heidi Van den Camp is currently working on extending these results to lopsp-operations.

Another problem mentioned in \cite{gocox} -- the generation of lsp-operations for a given inflation factor -- has been solved \cite{goetschalckx2020generation}. Such an algorithm
not only allows the generation of lsp-operations, but also the generation of polyhedra and other embedded graphs with some specific symmetry groups of the embedding. For
lopsp-operations however such an algorithm does not yet exist and is therefore the topic of future research.

So some of the problems in \cite{gocox} have been solved in the meantime, but most of them are still open.

		\bibliographystyle{plain}
		\bibliography{../literatur}
\end{document}